\definecolor{red}{rgb}{1,0,0}
\newcommand{\xto}[1]{\xrightarrow{\phantom{a}{#1}{\phantom{a}}}}
\newcommand{\vvirg}{ , \dots , }
\newcommand{\ootimes}{ \otimes \cdots \otimes }
\newcommand{\ttimes}{ \times \cdots \times }
\newcommand{\dotitem}{ \item[$\cdot$]}
\newcommand{\textsum}{{\textstyle \sum}}
\newcommand{\textprod}{{\textstyle \prod}}
\newcommand{\bfz}{\mathbf{z}}
\newcommand{\calC}{\mathcal{C}}
\newcommand{\calI}{\mathcal{I}}
\newcommand{\calL}{\mathcal{L}}
\newcommand{\calM}{\mathcal{M}}
\newcommand{\calN}{\mathcal{N}}
\newcommand{\calO}{\mathcal{O}}
\newcommand{\calR}{\mathcal{R}}
\newcommand{\calS}{\mathcal{S}}
\newcommand{\bbC}{\mathbb{C}}
\newcommand{\bbP}{\mathbb{P}}
\newcommand{\bbX}{\mathbb{X}}
\newcommand{\frakS}{\mathfrak{S}}
\newcommand{\rmR}{\mathrm{R}}
\renewcommand{\phi}{\varphi}
\newcommand{\eps}{\varepsilon}
\renewcommand{\theta}{\vartheta}
\newcommand{\dashto}{\dashrightarrow}
\renewcommand{\tilde}[1]{\widetilde{#1}}
\renewcommand{\bar}[1]{\overline{#1}}
\newcommand{\id}{\mathrm{id}}
\DeclareMathOperator{\Hom}{Hom}
\DeclareMathOperator{\Sym}{Sym}
\DeclareMathOperator{\mult}{mult}
\newcommand{\rmHF}{\mathrm{HF}}
\DeclareMathOperator{\Res}{Res}
\newcommand{\fillwidthof}[3][c]
	{%
		\parbox
		{%
			\widthof{#2}%
		}%
		{%
			\ifx#1c%
				\centering#3%
			\else\ifx#1l%
				#3\hfill%
			\else\ifx#1r%
				\hfill#3%
			\fi\fi\fi%
		}%
	}%
\def\mylettrine#1#2 {\lettrine{#1}{#2}\space}
\newcommand{\partinto}[1][]{\smash{\mathord{\mathchoice{%
  \xymatrix@=0.4em@1{%
  \ar@{|-}[rr]_-*--{\scriptstyle #1}
  &*{\phantom{\scriptstyle{#1}}}&}
}{
  \xymatrix@=0.25em@1{%
  \ar@{|-}[rr]_-*--{\scriptstyle #1}
  &*{\phantom{\scriptstyle{#1}}}&}
}{
  \xymatrix@=0.2em@1{%
  \ar@{|-}[rr]_-*--{\scriptscriptstyle #1}
  &*{\phantom{\scriptscriptstyle{#1}}}&}
}{}}}}
\newcommand{\partintonosmash}[1][]{\mathord{\mathchoice{%
  \xymatrix@=0.4em@1{%
  \ar@{|-}[rr]_-*--{\scriptstyle #1}
  &*{\phantom{\scriptstyle{#1}}}&}
}{
  \xymatrix@=0.25em@1{%
  \ar@{|-}[rr]_-*--{\scriptstyle #1}
  &*{\phantom{\scriptstyle{#1}}}&}
}{
  \xymatrix@=0.2em@1{%
  \ar@{|-}[rr]_-*--{\scriptscriptstyle #1}
  &*{\phantom{\scriptscriptstyle{#1}}}&}
}{}}}
\newcommand{\partintostar}[1][]{\smash{\mathord{\mathchoice{%
  \xymatrix@=0.4em@1{%
  \ar@{|-}[rr]_-*--{\scriptstyle #1}^-*--{\scriptstyle \ast}
  &*{\phantom{\scriptstyle{#1}}}&}
}{
  \xymatrix@=0.25em@1{%
  \ar@{|-}[rr]_-*--{\scriptstyle #1}^-*--{\scriptstyle \ast}
  &*{\phantom{\scriptstyle{#1}}}&}
}{
  \xymatrix@=0.2em@1{%
  \ar@{|-}[rr]_-*--{\scriptscriptstyle #1}^-*--{\scriptstyle \ast}
  &*{\phantom{\scriptscriptstyle{#1}}}&}
}{}}}}
\newcommand{\partintostarnosmash}[1][]{\mathord{\mathchoice{%
  \xymatrix@=0.4em@1{%
  \ar@{|-}[rr]_-*--{\scriptstyle #1}^-*--{\scriptstyle \ast}
  &*{\phantom{\scriptstyle{#1}}}&}
}{
  \xymatrix@=0.25em@1{%
  \ar@{|-}[rr]_-*--{\scriptstyle #1}^-*--{\scriptstyle \ast}
  &*{\phantom{\scriptstyle{#1}}}&}
}{
  \xymatrix@=0.2em@1{%
  \ar@{|-}[rr]_-*--{\scriptscriptstyle #1}^-*--{\scriptstyle \ast}
  &*{\phantom{\scriptscriptstyle{#1}}}&}
}{}}}
 \newtheorem{theorem}{Theorem}[section]
 \newtheorem{corollary}[theorem]{Corollary}
 \newtheorem{lemma}[theorem]{Lemma}
 \newtheorem{proposition}[theorem]{Proposition}
 \newtheorem{conjecture}[theorem]{Conjecture}
  \newtheorem*{question*}{\sc Question}
  \newtheorem*{problem}{\sc Problem} 
 \theoremstyle{definition}
 \newtheorem{definition}[theorem]{Definition}
 \theoremstyle{remark}
 \newtheorem{remark}[theorem]{Remark}
 \theoremstyle{definition}
 \newtheorem{example}[theorem]{Example}
 \newtheorem{notation}[theorem]{\it Notation}
\theoremstyle{plain}
\newtheorem*{claim*}{Claim}
\newcommand{\Flat}{\mathrm{Flat}}
\newcommand{\rk}{\mathrm{rank}}
\newcommand{\cat}{\mathrm{cat}}
\newcommand{\rank}{\mathrm{R}}
\newcommand{\brank}{\mathrm{\underline{R}}}
\renewcommand{\HF}{\mathrm{HF}}
 \title[Strict submultiplicativity of rank and border rank]{Geometric conditions for strict submultiplicativity of rank and border rank}
 \dedicatory{To Giorgio Ottaviani, on the occasion of his 60th birthday}
\subjclass[2010]{15A69; 14N05, 14H99}
\keywords{rank, border rank, tensor product, Segre product, secant variety}
 \author[E. Ballico]{Edoardo Ballico}
 \author[A. Bernardi]{Alessandra Bernardi}
 \address[E. Ballico, A. Bernardi, A. Oneto]{Dipartimento di Matematica, Universit\`a di Trento, 38123 Povo (TN), Italy}
\author[F. Gesmundo]{Fulvio Gesmundo}
\address[F. Gesmundo]{QMATH, Dept. Math. Sciences, U. Copenhagen, Universitetsparken 5, 2100 Copenhagen O., Denmark}
\author[E. Ventura]{Emanuele Ventura}
\address[E. Ventura]{Universit\"{a}t Bern, Mathematisches Institut, Sidlerstrasse 5, 3012 Bern, Switzerland}
\email[Ballico]{edoardo.ballico@unitn.it }
\email[Bernardi]{alessandra.bernardi@unitn.it}
\email[Gesmundo]{fulges@math.ku.dk}
\email[Oneto]{alessandro.oneto@unitn.it}
\email[Ventura]{emanueleventura.sw@gmail.com, emanuele.ventura@math.unibe.ch}
\newcommand{\uR}{\underline{\mathrm{R}}}
\newcommand{\Alb}{\mathrm{Alb}}
\begin{document}
  
\begin{abstract}
The $X$-rank of a point $p$ in projective space is the minimal number of points of an algebraic variety $X$ whose linear span contains $p$. This notion is naturally submultiplicative under tensor product. We study geometric conditions that guarantee strict submultiplicativity. We prove that in the case of points of rank two, strict submultiplicativity is entirely characterized in terms of the trisecant lines to the variety. Moreover, we focus on the case of curves: we prove that for curves embedded in an even-dimensional projective space, there are always points for which strict submultiplicativity occurs, with the only exception of rational normal curves.
\end{abstract}

\maketitle

 \section{Introduction}\label{sec:intro}
 
Let $V$ be a complex finite dimensional vector space and let $\bbP V$ be the corresponding projective space. When $V = \bbC^{n+1}$, denote $\bbP \bbC^{n+1}$ simply by $\bbP^n$. Throughout the paper, a projective, irreducible, reduced and linearly nondegenerate (i.e., not contained in a hyperplane) algebraic variety is called \emph{variety}.

Ever since the nineteenth century, a line of research dealt with determining normal forms of algebraic objects in terms of basic building blocks: a classical example concerns expressions of homogeneous polynomials as sum of powers of linear forms. These additive problems can be rephrased equivalently in geometric terms as follows: given a variety and a point in its ambient space, determine sets of points of the variety whose linear span contains the given point. This approach motivated the study of secant varieties during the twentieth century. In the last decades the connections with applications involving additive tensor decomposition attracted the interest of a broad community, both in pure and applied mathematics and in other fields. In the rich literature, we briefly mention the classical \cite{Clebsch,Sylv:PrinciplesCalculusForms,Palatini1}, concerning the study of homogeneous polynomials, \cite{Palatini:SuperficieAlg,Adl:JoinsHigherSecantVarieties}, studying secant varieties of curves, \cite{AllRho:PhylogeneticIdealsVarsGeneralMarkovModel,Strassen:Gauss_elimination_not_optimal,DurVidCir:TrheeCubitsEntTwoIneqWays} drawing connection with phylogenetics, theoretical computer science and quantum information theory. We refer to \cite{Lan12:Book,BCCGO:Hitchhiker} and the references therein for a more extensive presentation.

We formally introduce the notion of rank with respect to an arbitrary variety $X\subseteq \bbP V$. Given a point $p \in \bbP V$, the \textbf{$X$-rank} (or simply the \emph{rank}) of $p$ is the minimal number of points of $X$ whose linear span contains $p$:
\[
\rank_X(p) := \min\left\{r ~|~ \exists~q_1,\ldots,q_s \in X,~ p \in \langle q_1,\ldots,q_s\rangle\right\}.
\]
 
Let $\sigma_r(X)$ be the \textbf{$r$-th secant variety} of $X$, i.e., the Zariski-closure of the set of points whose $X$-rank is at most $r$. The \textbf{border $X$-rank} (or simply the \emph{border rank}) of $p$ is 
\[
\brank_X(p) := \min\left\{r ~|~ p \in \sigma_r(X)\right\}. 
\]

It is natural to study properties of rank and border rank with respect to basic operations among varieties. Due to the additive nature of the problem, in the most general framework one is interested in relations between the rank of a linear combination of two points and their ranks. For example, in the tensor setting, Strassen \cite{Str:VermeidungDiv} conjectured that the rank of a direct sum of two tensors always coincides with the sum of their ranks: this conjecture has been answered affirmatively in several cases \cite{CCG,BGL13:DeterminantalEquations,CCC15,Teit:SuffCondStrassen,CCCGW15,LM17:AbelianTensors,CarCatOne:WaringLoci,BucPosRup:StrassenAdditivity}. In the case of tensors with three factors, a counterexample was given by Shitov \cite{Shitov:StrassenCounterexample}, while the analogous equality for border ranks was shown to be false already by Sch\"onhage \cite{Sch81:Partial}. In theoretical computer science, one is interested in multiplicativity properties of tensor rank and border rank under Kronecker powers, which capture the asymptotic complexity of the bilinear map defined by the tensor \cite{Str:RelativeBilComplMatMult}. 

In this work, we are interested in multiplicativity properties of rank and border rank under tensor product. Given two varieties $X_1 \subseteq \bbP V_1$ and $X_2 \subseteq \bbP V_2$, their \textit{Segre product} is the image of $X_1 \times X_2 \subseteq \bbP V_1 \times \bbP V_2$ under the \emph{Segre embedding} 
\[
\begin{array}{c c c}
 \bbP V_1 \times \bbP V_2 & \longrightarrow & \bbP (V_1\otimes V_2), \\
([v_1] , [v_2]) & \mapsto & [v_1 \otimes v_2].
 \end{array}
\]
Denote the Segre product of $X_1$ and $X_2$ by $X_1 \times X_2$. We often identify points in projective space and vectors of the line they represent; in particular, we will drop the bracket $[-]$ from the notation.

For any $p_1 \in \bbP V_1$ and $p_2 \in \bbP V_2$, one has
 \begin{equation}\label{trivial upper bounds}
 \rank_{X_1 \times X_2}(p_1\otimes p_2) \leq \rank_{X_1}(p_1)\rank_{X_2}(p_2) \text{ \quad and \quad} \brank_{X_1 \times X_2}(p_1\otimes p_2) \leq \brank_{X_1}(p_1)\brank_{X_2}(p_2).
 \end{equation}
Certain techniques to determine lower bounds on rank and border rank guarantee that the lower bound propagates to the tensor product and can be used to prove multiplicativity: this is the case of flattening lower bounds, see Section \ref{section: homogeneous poly}. However, both inequalities in \eqref{trivial upper bounds} can be strict in general, as shown in \cite{ChrJenZui:NonMultTensorRank} for rank and \cite{ChrGesJen:BorderRankNonMult} for border rank.

Despite the achievements mentioned above originated from tensor problems, we investigate the multiplicativity problem in general. 

\begin{problem}\label{problem}
Determine geometric conditions which guarantee either multiplicativity or strict submultiplicativity in the inequalities in \eqref{trivial upper bounds}.
\end{problem}

{\bf Contributions and structure of the paper.}

\begin{itemize}
\item In Section \ref{sec:multisecant_spaces}, we classify the ranks of tensor products of points of rank $2$: if $p_1,p_2$ are points of rank $2$ with respect to varieties $X_1, X_2$ respectively, we give sufficient and necessary conditions so that the $(X_1\times X_2)$-rank of $p_1 \otimes p_2$ is equal to $3$, instead of $4$; see Theorem~\ref{thm: rank 2 x rank 2 classification}. As a corollary, if $p$ has $X$-rank $2$, then $p^{\otimes 2}$ has $X^{\times 2}$-rank equal to $3$ if and only if $p$ lies on a \emph{multisecant line}. 

\item We show that if a variety $X$ admits a secant $r$-dimensional plane $\bbP W$ intersecting $X$ in more than $r+1$ points then, for any $p \in \bbP W$, the $X^{\times (r+1)}$-rank of $p^{\otimes (r+1)}$ is strictly less than $\rank_{X^{\times (r+1)}}(p)^{r+1}$; see Proposition~\ref{prop: Pr r+2 secant}.

\item In Section \ref{sec:geometric_cond}, we investigate conditions which guarantee that the geometric construction in \cite{ChrGesJen:BorderRankNonMult} may be applied; see Proposition \ref{prop: map to abelian implies not rational} and Proposition \ref{prop: projection generates multidrop}. We show that if $X \subseteq \bbP^{2k}$ is a curve which is not the rational normal curve of degree $2k$, then there are always examples of strict submultiplicativity; see Theorem \ref{thm: curves}.

\item In Section \ref{section: homogeneous poly}, we turn our attention to homogeneous polynomials. In more geometric terms, we characterize rank submultiplicativity when either $X$ is a rational normal curve or $X$ is the third Veronese embedding of $\bbP^2$; see Proposition \ref{thm:submult_binary} and Proposition \ref{prop:submult_planecubics}, respectively. 

\item In Section \ref{sec:product_decomp}, we consider cases for which multiplicativity of rank holds. We ask whether all minimal decompositions of products are products of minimal decompositions of the factors. In Theorem~\ref{thm: identifiability product via embeddings}, we give conditions which guarantee a positive answer while in Example \ref{ex:NonProdMinimalDecomp} we provide an example having a negative answer.
\end{itemize}

In view of these results, we propose the following.

\begin{conjecture}\label{conj}
Let $X \subseteq \bbP^n$ be a variety and let $p \in \bbP^n$. 
\begin{center}
If $\brank_{X}(p) < \rank_{X}(p)$, then $\rank_{X^{\times 2}}(p \otimes p) < \rank_{X}(p)^2$.
\end{center}
\end{conjecture}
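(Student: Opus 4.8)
The plan is to convert the degeneracy encoded in the hypothesis $\brank_X(p)<\rank_X(p)$ into an honest economical decomposition of $p\otimes p$. The first thing to stress is that the inequality we want is \emph{not} a border rank statement: border rank is submultiplicative, so a limiting decomposition gives immediately $\brank_{X\times X}(p\otimes p)\le \brank_X(p)^2<\rank_X(p)^2$. The entire difficulty is that $X$-rank is not upper semicontinuous, so this bound says nothing a priori about $\rank_{X\times X}(p\otimes p)$. Thus the goal reduces to producing a genuine (not merely limiting) expression of $p\otimes p$ as a sum of strictly fewer than $\rank_X(p)^2$ points of $X\times X$.

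Write $\rho=\rank_X(p)$ and $b=\brank_X(p)$. Fix an honest decomposition $p=y_1+\cdots+y_\rho$ with $y_k\in\widehat X$, and a border decomposition $p=\lim_{t\to 0}z(t)$, $z(t)=x_1(t)+\cdots+x_b(t)$ with $x_i(t)\in\widehat X$. Because the limit has rank $\rho>b$, at least one curve $x_i(t)$ must have a pole at $t=0$: it is exactly this pole, surviving only in the limit, that is responsible for the rank jump. The square $z(t)\otimes z(t)=\sum_{i,j}x_i(t)\otimes x_j(t)$ tends to $p\otimes p$ and consists of $b^2$ points of $X\times X$. The plan is to extract $p\otimes p$ from this family by a \emph{symmetric de-bordering} adapted to the square: the combination $p\otimes z(t)+z(t)\otimes p-z(t)\otimes z(t)$ already agrees with $p\otimes p$ to second order in $t$, so the leading error cancels, and one tries to pair the remaining jet corrections against truncations of the honest decomposition $p=\sum_k y_k$ on the complementary factor. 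The hope is that the simultaneous degeneration in both factors lets one cancel at least one of the $\rho^2$ products $y_k\otimes y_l$, yielding an exact decomposition into at most $\rho^2-1$ points of $X\times X$.

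The main obstacle is precisely the de-bordering step. Extracting the $t^0$-coefficient of a family whose terms have poles produces points lying on \emph{osculating} spaces of $X\times X$ rather than on $X\times X$ itself; this is the same phenomenon that forces $\rho>b$ in the first place, now transported to the product. To close the argument one must show that every such osculating correction can be rewritten using points of $X\times X$ \emph{without} exceeding the budget $\rho^2-1$. I would try to control this by bounding the pole order (equivalently, the order of contact of the degeneration with $X$) and by re-absorbing each correction into a portion of $p=\sum_k y_k$ tensored on the other factor, so that the total count stays of order $b\rho$ plus lower-order corrections, strictly below $\rho^2$. Making this re-absorption effective, uniformly in $X$, is the crux and the reason the statement is only conjectured here.

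As a more tractable route, I would first attempt a reduction to the cases already settled in the paper. Concretely, I would try to show that $\brank_X(p)<\rank_X(p)$ always forces an applicable tangential or multisecant configuration, so that Proposition~\ref{prop: Pr r+2 secant}, or the curve and Veronese analyses behind Theorem~\ref{thm: curves}, Proposition~\ref{thm:submult_binary}, and Proposition~\ref{prop:submult_planecubics}, can be invoked directly; and in parallel prove the conjecture outright for curves and for Veronese varieties, seeking a common geometric mechanism. In every approach the decisive and genuinely hard step is the passage from the border (osculating) decomposition of $p\otimes p$ to an honest $X\times X$-decomposition while preserving the saving.
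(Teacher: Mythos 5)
Your proposal is not a proof, and you correctly say so yourself; but let me name the gap precisely. The crux is the ``symmetric de-bordering'' step, and two separate things break there. First, the combination $p\otimes z(t)+z(t)\otimes p-z(t)\otimes z(t)$ equals $p\otimes p-\eps(t)\otimes\eps(t)$ with $\eps(t)=z(t)-p$, so it is only an approximation; extracting the limit as $t\to 0$ reintroduces exactly the osculating-point phenomenon responsible for $\brank_X(p)<\rank_X(p)$ in the first place, now on $X\times X$, and nothing in the proposal shows how to rewrite those limit corrections through honest points of $X\times X$. Second, even if the identity were exact, the count does not close: writing $\rho=\rank_X(p)$ and $b=\brank_X(p)$, the three terms cost at most $\rho b$, $\rho b$ and $b^2$ points respectively, and $2\rho b+b^2<\rho^2$ forces $b<(\sqrt{2}-1)\rho$; for $b=\rho-1$ the bound is useless, and the ``re-absorption against truncations of $p=\sum_k y_k$'' that is supposed to repair this is never specified. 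The identity that actually works in this circle of ideas --- the one behind \eqref{eq:CJZdrop} and Lemma \ref{lemma: multidrop}, namely $p\otimes p=q_0\otimes q_0+2\,(q_1\otimes z+z\otimes q_1)$ whenever $p=q_0+2z=q_1+z$ with $z\in X$ --- is exact, but to beat $\rho^2$ it needs the two auxiliary points $q_0,q_1$ to have \emph{rank} (not border rank) strictly smaller than $\rho$. Producing such a rank-dropping collinear configuration from the bare hypothesis $\brank_X(p)<\rank_X(p)$ is precisely what nobody knows how to do.

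You should also be aware that the statement you were handed is Conjecture \ref{conj}: the paper does not prove it either, and offers no argument to compare yours against. It is proposed on the strength of partial results: an asymptotic version (strict submultiplicativity for \emph{some} power $k$, via the limit ranks of \cite{ChrGesJen:BorderRankNonMult}), binary forms (Theorem \ref{thm:submult_binary}), and plane cubics (Proposition \ref{prop:submult_planecubics}). Each of these succeeds exactly because, in that special setting, one can exhibit the rank-dropping configuration above --- for binary forms via Zariski-openness of the generic-rank locus, for $xy^d$ via tangent lines, for curves via multidrop lines. So your fallback route (reducing the hypothesis to a tangential or multisecant configuration, and proving the curve and Veronese cases first) is an accurate diagnosis of the mechanism behind the paper's evidence; but as a proof of the conjecture, your attempt has the gap described above, and that gap \emph{is} the open problem.
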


Observe that Conjecture \ref{conj} holds \textit{asymptotically}, in the following sense. If $p \in \bbP^n$ is a point such that $\uR_{X}(p) < \rmR_{X}(p)$, then there is a value $k$ such that $\rmR_{X^{\times k}} \left( p^{\otimes k}\right) < \rmR_{X} ( p)^k$: this is a consequence of the fact that the two limits $\lim_{k \to \infty} \uR_{X^{\times k}} \left( p^{\otimes k}\right)^{1/k}$ and $\lim_{k \to \infty}\rmR_{X^{\times k}} \left( p^{\otimes k}\right)^{1/k}$ coincide and they are bounded from above by $\uR_X(p)$. We refer to \cite[Section 6]{ChrGesJen:BorderRankNonMult} for details on this asymptotic behaviour.

\subsection*{Acknowledgements} E.B. and A.B. acknowledge financial support from GNSAGA of INDAM (Italy). F.G. acknowledges financial support from the VILLUM FONDEN via the QMATH Centre of Excellence (Grant no. 10059). A.O. acknowledges financial support from the Alexander von Humboldt-Stiftung via a Humboldt Research Fellowship for Postdoctoral Researchers (April 2019 - March 2021) at OVGU Magdeburg (Germany).

This collaboration started while F.G., A.O. and E.V. were visiting University of Trento for a Research in Pairs program at CIRM Trento in July 2018, continued while A.B. and F.G. were visiting the Institute for Computational and Experimental Research in Mathematics in Providence, RI, in Fall 2018, and was completed while A.B., F.G. and A.O. were visiting University of Pavia for the XXI Congresso dell'Unione Matematica Italiana, in September 2019. We thank CIRM, ICERM and UMI for providing good research environments to work on this project.

\section{Multisecant spaces and strict submultiplicativity}\label{sec:multisecant_spaces}
In this section, we look for geometric conditions on a variety $X$ which guarantee strict submultiplicaitivity for points of $X$-rank $2$. The first idea comes from \cite[Lemma 4.1]{ChrGesJen:BorderRankNonMult} which shows that if the secant variety $\sigma_r(X)$ has a trisecant line $\bbP L$ such that one of the points of $\bbP L \cap \sigma_r(X)$ lies on $X$ itself, then there exists at least one point on $\bbP L$ for which the rank multiplicativity does not hold. We prove that strict rank submultiplicativity for points having $X$-rank $2$ depends on the existence of a trisecant line to $X$ itself.

\begin{definition}\label{def: multisecant line}
A line $\bbP L$ is \textbf{multisecant} to a variety $X \subset \bbP V$ if the intersection $X \cap \bbP L$ contains a set of at least $3$ distinct points.
\end{definition}

\begin{notation}
	Given a vector space $V$, denote by ${\rm Sym}^rV$ the space of symmetric elements of $V^{\otimes r}$. Let $\nu_r$ be the
	map from $V$ to ${\rm Sym}^rV$ which sends $v \in V$ to $v^{\otimes r} \in {\rm Sym}^rV$. Denote by $\nu_r$ also the corresponding map between the projective spaces known as Veronese embedding. Given a subset $S \subseteq V$, denote by $\langle S \rangle$ the linear span of $S$ in $V$. Similarly, given a subset $S \subseteq \bbP V$, denote by $\langle S \rangle$ the projective linear span of $S$ in $\bbP V$.
\end{notation}

\begin{proposition}\label{prop: every point on multisecant}
Let $X \subseteq \bbP V$ be a variety and let $\bbP L$ be a line such that $\bbP L \cap X$ contains a set of at least $k+1$ points. Let $p\in \bbP L\setminus X$. Then, for every $r \leq k$ 
 \[
\rmR_{X^{\times r}} \left(p^{\otimes r}\right) \leq r+1. 	
 \]
 In particular, for every $r \geq 2$, $\rmR_{X^{\times r}}(p^{\otimes r}) < \rmR_X(p)^r$. 
\end{proposition}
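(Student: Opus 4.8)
The plan is to work entirely inside the two-dimensional linear subspace $L \subseteq V$ defining the line $\bbP L$, and to exploit the elementary structure of $\Sym^r L$. First I would fix $r+1$ distinct points $q_0, \ldots, q_r \in X \cap \bbP L$; these exist because by hypothesis $X \cap \bbP L$ contains at least $k+1 \geq r+1$ points. The key observation is that $p^{\otimes r}$ and each $q_i^{\otimes r}$ all lie in the subspace $\Sym^r L \subseteq V^{\otimes r}$, which has dimension $r+1$. Projectively $\bbP \Sym^r L \cong \bbP^r$, and $\nu_r$ restricted to $\bbP L$ has image a rational normal curve of degree $r$ passing through all the points $q_i^{\otimes r} = \nu_r(q_i)$ as well as through $p^{\otimes r} = \nu_r(p)$.

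Next I would invoke the classical fact that any $r+1$ distinct points on a rational normal curve of degree $r$ in $\bbP^r$ are linearly independent. Concretely, writing $q_i = s_i v_0 + t_i v_1$ in a basis $\{v_0, v_1\}$ of $L$, the coordinate matrix of $q_0^{\otimes r}, \ldots, q_r^{\otimes r}$ in the monomial basis of $\Sym^r L$ is, up to scaling of rows and columns, the Vandermonde-type matrix $\left(s_i^{\,r-j} t_i^{\,j}\right)_{0 \leq i,j \leq r}$, whose determinant is nonzero precisely because the $q_i$ are projectively distinct. Hence $q_0^{\otimes r}, \ldots, q_r^{\otimes r}$ form a basis of $\Sym^r L$ and therefore span $\bbP \Sym^r L$. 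Since $p \in \bbP L$ forces $p^{\otimes r} \in \bbP \Sym^r L$, we conclude $p^{\otimes r} \in \langle q_0^{\otimes r}, \ldots, q_r^{\otimes r}\rangle$.

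Finally, I would observe that each $q_i^{\otimes r}$ is the Segre image of the diagonal point $(q_i, \ldots, q_i) \in X^{\times r}$, so it is a genuine point of the Segre variety $X^{\times r}$. This exhibits $p^{\otimes r}$ in the linear span of $r+1$ points of $X^{\times r}$, which yields $\rmR_{X^{\times r}}(p^{\otimes r}) \leq r+1$ straight from the definition of rank. For the ``in particular'' clause, note that $p \in \bbP L \setminus X$ lies on a line spanned by two distinct points of $X$, so $\rmR_X(p) = 2$; then $r+1 < 2^r = \rmR_X(p)^r$ for every $r \geq 2$, as required.

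I do not expect a serious obstacle here: the argument is essentially a reduction to the rational normal curve together with the Vandermonde independence. The only two points genuinely requiring care are the independence computation (ensuring the determinant is nonzero exactly under projective distinctness of the $q_i$) and the remark that the diagonal points $q_i^{\otimes r}$ are honest points of $X^{\times r}$, so that the conclusion is a statement about actual rank rather than mere membership in the symmetric subspace $\Sym^r L$.
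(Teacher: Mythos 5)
Your proof of the main inequality is correct and is essentially the paper's own argument: both reduce to the rational normal curve $\nu_r(\bbP L) \subseteq \bbP(\Sym^r L)$, observe that the $r+1$ distinct points $q_i^{\otimes r}$ on it are linearly independent and hence span $\bbP(\Sym^r L) \simeq \bbP^r$, and note that these are honest points of $X^{\times r}$ containing $p^{\otimes r}$ in their span. Your Vandermonde determinant computation simply makes explicit the independence that the paper asserts without proof, which is a welcome addition rather than a different route.

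However, your treatment of the ``in particular'' clause has a genuine gap. That clause asserts $\rmR_{X^{\times r}}(p^{\otimes r}) < \rmR_X(p)^r$ for \emph{every} $r \geq 2$, with no upper bound in terms of $k$, but your comparison $r+1 < 2^r$ relies on the bound $\rmR_{X^{\times r}}(p^{\otimes r}) \leq r+1$, which you have only established for $r \leq k$. For $r > k$ your argument says nothing, since you cannot choose $r+1$ distinct points on $\bbP L \cap X$. The paper closes this by propagating strictness upward from $r = 2$: the hypothesis implicitly has $k \geq 2$ (this is the multisecant setting, so $\bbP L \cap X$ contains at least three points), hence the first part gives $\rmR_{X^{\times 2}}(p^{\otimes 2}) \leq 3$, and then for any $r \geq 2$ submultiplicativity of rank under tensor products yields
\[
\rmR_{X^{\times r}}\left(p^{\otimes r}\right) \leq \rmR_{X^{\times 2}}\left(p^{\otimes 2}\right) \cdot \rmR_X(p)^{r-2} \leq 3 \cdot 2^{r-2} < 2^r = \rmR_X(p)^r.
\]
Adding this one line (and noting where $k \geq 2$ is used) completes your proof; as written, it only covers $2 \leq r \leq k$.
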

\begin{proof}
Notice that $\rmR_X(p) = 2$, because $p$ lies on a secant line to $X$ and $p \notin X$. Let $L \subset V$ be the vector space of dimension $2$ defining the projective line $\bbP L$: $p \in \bbP L$ and $p^{\otimes r} = \nu_r(p) \in \bbP ({\rm Sym}^r L)$. Fix $r \leq k$ and let $S \subseteq \bbP L \cap X$ be a set of $r+1$ points. Then, $\nu_r(S) \subseteq \nu_r(X) \subseteq X^{\times r}$ is a set of $r+1$ points lying on the rational normal curve $\nu_r(\bbP L) \subseteq \bbP({\rm Sym}^r L) \subseteq \bbP V^{\otimes r}$. In particular, $\nu_r(S)$ is a set of $r+1$ linearly independent points in $\bbP({\rm Sym}^r L) \simeq \bbP^r$; hence, $\langle \nu_r(S) \rangle = \bbP({\rm Sym}^r L)$. Since $p^{\otimes r} \in \langle \nu_r(S) \rangle$, we conclude that $\rank_{X^{\times r}} \left(p^{\otimes r}\right) \leq r+1$. 

The inequality $\rmR_{X^{\times r}}(p^{\otimes r}) < \rmR_X(p)^r$ for every $r \geq 2$ follows because if strict submultiplicativity holds for $r = 2$, as shown above, then it holds for any $r \geq 2$.
\end{proof}

\begin{remark}
More generally, the argument of Proposition \ref{prop: every point on multisecant} applies if $\bbP L \cap X$ is any \textit{$0$-dimensional scheme} rather than a set of points. The \textit{cactus $X$-rank} of a point $p$ is the minimal degree of a $0$-dimensional scheme on $X$ whose linear span contains $p$ (see \cite{RS11:RankSymmetricForm, BR}). Then, similarly as for the $X$-rank, there is a notion of \textit{cactus varieties} and \textit{border cactus $X$-rank} (see \cite{BR,BuczBucz:SecantVarsHighDegVeroneseReembeddingsCataMatAndGorSchemes}). Proposition \ref{prop: every point on multisecant} applies to cactus rank: if $\bbP L$ is a line whose intersection with $X$ is a $0$-dimensional scheme of degree at least $k+1$ then for every $r \leq k$, the cactus $X^{\times r}$-rank of $p^{\otimes r}$ is at most $r+1$.
\end{remark}

Proposition \ref{prop: every point on multisecant} guarantees that for every point $p$ lying on a multisecant line of $X \subseteq \bbP V$, but not on $X$, multiplicativity of rank does not hold; in particular, the $X^{\times 2}$-rank of $p^{\otimes 2}$ is at most~$3$. Theorem~\ref{thm: rank 2 x rank 2 classification} below shows that this is essentially the only way that the tensor product of two elements of rank $2$ has rank $3$ instead of $4$. 

First, we record an easy observation which will be used in the proof of Theorem \ref{thm: rank 2 x rank 2 classification}.

\begin{lemma}\label{lemma: spanning p1p2 implies factors span p1 and p2}
 Let $p_1 \otimes p_2 \in \bbP (V_1 \otimes V_2)$. Suppose $p_1 \otimes p_2 \in \langle a_1 \otimes b_1 \vvirg a_r \otimes b_r\rangle$. Then, $p_1 \in \langle a_1 \vvirg a_r\rangle$ and $p_2 \in \langle b_1 \vvirg b_r\rangle$.
\end{lemma}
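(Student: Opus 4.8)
The plan is to prove the conclusion for $p_2$; the conclusion for $p_1$ then follows by the symmetric argument, exchanging the roles of $V_1$ and $V_2$. After choosing representing vectors, the hypothesis provides scalars $\lambda_1 \vvirg \lambda_r$ with
\[
p_1 \otimes p_2 = \sum_{i=1}^r \lambda_i\, a_i \otimes b_i \quad \text{in } V_1 \otimes V_2.
\]
The key tool is partial contraction against a covector: for $\phi \in V_2^*$ the linear map $\id_{V_1} \otimes \phi \colon V_1 \otimes V_2 \to V_1$ sends $p_1 \otimes p_2$ to $\phi(p_2)\, p_1$ and sends each rank-one term $a_i \otimes b_i$ to $\phi(b_i)\, a_i$.

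To show $p_2 \in \langle b_1 \vvirg b_r\rangle$ I would argue by contradiction. Suppose $p_2 \notin \langle b_1 \vvirg b_r\rangle$. Since the span of the $b_i$ is a proper linear subspace of $V_2$ not containing $p_2$, there exists $\phi \in V_2^*$ vanishing on $\langle b_1 \vvirg b_r\rangle$ with $\phi(p_2) \neq 0$. Applying $\id_{V_1} \otimes \phi$ to both sides of the displayed identity, the right-hand side becomes $\sum_{i=1}^r \lambda_i \phi(b_i)\, a_i = 0$, while the left-hand side becomes $\phi(p_2)\, p_1$. Hence $\phi(p_2)\, p_1 = 0$, and since $p_1 \neq 0$ (it represents a point of $\bbP V_1$) this forces $\phi(p_2) = 0$, a contradiction. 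Therefore $p_2 \in \langle b_1 \vvirg b_r\rangle$.

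Interchanging the two tensor factors and contracting instead against a functional on $V_1$ yields $p_1 \in \langle a_1 \vvirg a_r\rangle$ by the identical argument. There is no genuine obstacle here: the statement is elementary linear algebra, and the only point requiring a moment's care is the existence of the separating functional $\phi$, which is immediate because $\langle b_1 \vvirg b_r\rangle$ is a proper subspace avoiding $p_2$. Conceptually, the proof is just the observation that a partial contraction of $p_1 \otimes p_2$ recovers a scalar multiple of the corresponding factor, while the contraction of $a_i \otimes b_i$ stays in the span of the surviving factors.
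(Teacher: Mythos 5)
Your proof is correct and rests on the same key tool as the paper's: contracting the identity $p_1 \otimes p_2 = \sum_{i} \lambda_i\, a_i \otimes b_i$ against a linear functional on one tensor factor. The only difference is one of arrangement rather than substance: the paper argues directly, noting that \emph{any} $\beta \in V_2^*$ with $\beta(p_2) \neq 0$ immediately exhibits $p_1$ as the explicit combination $\beta(p_2)^{-1}\sum_i \lambda_i \beta(b_i)\, a_i$ of the $a_i$'s, whereas you reach the conclusion for $p_2$ by contradiction via a functional vanishing on $\langle b_1, \ldots, b_r\rangle$ but not on $p_2$ --- both are valid, with the paper's version being marginally more economical since it needs no separating functional.
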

\begin{proof}
 After passing to the underlying linear spaces, with a slight abuse of notation, we write $p_1 \otimes p_2 = \textsum_i \lambda_i a_i \otimes b_i \in V_1 \otimes V_2$. If $\beta \in V_2^*$ is a linear form such that $\beta(p_2) \neq 0$, then $\beta(p_2) p_1 = \textsum_i \lambda_i \beta(b_i) a_i$ is a linear combination of $a_1 \vvirg a_r$ which gives $p_1$. Analogously, one can prove that $p_2 \in \left\langle b_1,\ldots,b_r \right\rangle$.
\end{proof}

\begin{theorem}\label{thm: rank 2 x rank 2 classification}
 For $i = 1,2$, let $X_i \subseteq \bbP V_i$ be varieties and let $p_i \in \bbP V_i$ such that $\rmR_{X_i}(p_i) = 2$. Then, $3 \leq \rmR_{X_1 \times X_2} (p_1 \otimes p_2) \leq 4$. Moreover, for $a_1,a_2,a_3 \in X_1$ and $b_1,b_2,b_3 \in X_2$, the following are equivalent:
\begin{enumerate}[(i)]
 \item\label{(i)} $\rmR_{X_1 \times X_2}(p_1 \otimes p_2) = 3$ with $p_1 \otimes p_2 \in \langle a_1 \otimes b_1, a_2 \otimes b_2, a_3 \otimes b_3\rangle$;
 \item\label{(ii)} the linear spaces $\bbP L_1 =  \langle a_1,a_2,a_3 \rangle$ and $\bbP L_2 =  \langle b_1,b_2,b_3 \rangle$ are multisecant lines to $X_1$ and $X_2$, respectively, where the $a_i$'s and the $b_i$'s are all distinct; moreover, if $\phi: \bbP L_1 \to \bbP L_2$ is the unique linear map such that $\phi(a_j) = b_j$, then $\phi(p_1) = p_2$.
\end{enumerate}
\end{theorem}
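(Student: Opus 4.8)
The plan is to prove the numerical bounds first and then the equivalence, splitting it into the two implications; I expect the direction (i)$\Rightarrow$(ii) to carry essentially all of the difficulty.

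\emph{The bounds $3\le \rmR_{X_1\times X_2}(p_1\otimes p_2)\le 4$.} The upper bound is immediate from \eqref{trivial upper bounds}, since $\rmR_{X_i}(p_i)=2$. For the lower bound I would exclude ranks $1$ and $2$. Rank $1$ would force $p_1\otimes p_2=a\otimes b$ with $a\in X_1$, $b\in X_2$, hence $p_1=a\in X_1$ and $\rmR_{X_1}(p_1)=1$, a contradiction. For rank $2$, writing $p_1\otimes p_2\in\langle a_1\otimes b_1,a_2\otimes b_2\rangle$ and applying Lemma~\ref{lemma: spanning p1p2 implies factors span p1 and p2} gives $p_1\in\langle a_1,a_2\rangle$ and $p_2\in\langle b_1,b_2\rangle$; as $p_i\notin X_i$, both pairs are linearly independent, so inside $\langle a_1,a_2\rangle\otimes\langle b_1,b_2\rangle\cong\bbC^2\otimes\bbC^2$ the tensor $\lambda_1 a_1\otimes b_1+\lambda_2 a_2\otimes b_2$ is represented by a diagonal matrix. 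Being decomposable forces $\lambda_1\lambda_2=0$, and then $p_1\otimes p_2$ is proportional to a single $a_i\otimes b_i$, again yielding $p_1\in X_1$. Thus the rank is at least $3$.

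\emph{(ii)$\Rightarrow$(i).} Here the idea is purely geometric and in the spirit of Proposition~\ref{prop: every point on multisecant}. Assuming $p_1\in\bbP L_1$ (implicit in the expression $\phi(p_1)=p_2$), I would consider the Segre image of the graph of $\phi$, namely the curve $C=\{x\otimes\phi(x):x\in\bbP L_1\}\subseteq\bbP(L_1\otimes L_2)$. Since $\phi$ is an isomorphism this parametrization is injective and given by quadratic forms, so $C$ is a smooth conic spanning a plane $\langle C\rangle\cong\bbP^2$. The three distinct points $a_j\otimes b_j=a_j\otimes\phi(a_j)$ lie on $C$, and three distinct points of a smooth conic are linearly independent, so $\langle a_1\otimes b_1,a_2\otimes b_2,a_3\otimes b_3\rangle=\langle C\rangle$. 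Finally $p_1\otimes p_2=p_1\otimes\phi(p_1)\in C\subseteq\langle C\rangle$, giving $\rmR_{X_1\times X_2}(p_1\otimes p_2)\le 3$; combined with the lower bound this is an equality, which is (i).

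\emph{(i)$\Rightarrow$(ii).} This is the substantial direction. Starting from $p_1\otimes p_2=\sum_{j=1}^3\lambda_j\,a_j\otimes b_j$, I would first observe that all $\lambda_j\ne 0$ and that the three summands are linearly independent, since otherwise $p_1\otimes p_2$ would lie in the span of two of them and have rank $\le 2$. Set $A=\langle a_1,a_2,a_3\rangle$ and $B=\langle b_1,b_2,b_3\rangle$, so that $p_1\in A$, $p_2\in B$ by Lemma~\ref{lemma: spanning p1p2 implies factors span p1 and p2}. The key numerical step is $\dim A=\dim B=2$: one has $\dim A,\dim B\ge 2$ because $\dim A=1$ would make all $a_j$ equal to one point $a\in X_1$ and force $p_1=a\in X_1$; and viewing $p_1\otimes p_2=\sum_j a_j\otimes b_j$ as a linear map $V_1^\ast\to V_2$, if $\dim A=3$ the $a_j$ form a basis of $A$ and the image equals $B$, whence $\dim B=\rk(p_1\otimes p_2)=1$, a contradiction; so $\dim A\le 2$, and symmetrically $\dim B\le 2$. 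With $\dim A=\dim B=2$, the four points $a_1\otimes b_1,a_2\otimes b_2,a_3\otimes b_3,p_1\otimes p_2$ lie on the Segre quadric $Q=\bbP A\times\bbP B$ inside $\bbP(A\otimes B)\cong\bbP^3$; the three $a_j\otimes b_j$ span a plane $\Pi$, the fourth point lies on $\Pi$, and no three of the four are collinear (collinearity of $p_1\otimes p_2$ with two of the $a_j\otimes b_j$ would give rank $\le 2$). Since a plane is not contained in a smooth quadric, $C:=\Pi\cap Q$ is a conic through these four points, and the plan is to show it is smooth: a reducible $C$ would be a union of two lines from the two rulings of $Q$ meeting at a node, the four points would split $2+2$ on them, and then $p_1\otimes p_2$ would share a ruling line with some $a_i\otimes b_i$, forcing $p_1=a_i\in X_1$ or $p_2=b_i\in X_2$. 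Hence $C$ is a smooth $(1,1)$-conic, i.e. the graph of an isomorphism $\phi\colon\bbP A\to\bbP B$; reading off the four incidences yields $b_j=\phi(a_j)$, $p_2=\phi(p_1)$, the $a_j$ (hence $b_j$) distinct, and $\bbP A=\bbP L_1$, $\bbP B=\bbP L_2$ multisecant to $X_1$, $X_2$, which is exactly (ii). The crux of the whole argument is this implication, and within it the two places where $\rmR_{X_i}(p_i)=2$ is indispensable—pinning down $\dim A=\dim B=2$ and excluding the reducible conic—both of which reduce to the single fact that $p_1\otimes p_2$ cannot lie on a ruling line of $Q$ passing through one of the $a_j\otimes b_j$, since that would place $p_i$ on $X_i$.
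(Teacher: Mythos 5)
Your proof is correct, and while the two numerical bounds and the easy implication (ii)$\Rightarrow$(i) run essentially as in the paper (the paper proves (ii)$\Rightarrow$(i) by invoking Proposition~\ref{prop: every point on multisecant}, whose Veronese conic is exactly your graph conic transported by $\id \otimes \phi$), your argument for the hard direction (i)$\Rightarrow$(ii) takes a genuinely different route. The paper works in coordinates throughout: it excludes $\dim \bbP L_i = 2$ by writing the $a_j \otimes b_j$ as diagonal or block matrix units and checking their span contains no further rank-one matrices (three separate cases); it then proves distinctness of the $a_j$'s and $b_j$'s by another case analysis on $2\times 2$ matrices; and finally it obtains $\phi(p_1) = p_2$ by a symmetrization trick, pushing everything into $\bbP(\Sym^2 L_2)$ via $\phi \times \id$ and using that the only decomposable elements there are the symmetric squares, i.e.\ points of $\nu_2(\bbP L_2)$. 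You replace the last two steps by a single synthetic argument: the plane $\Pi$ spanned by the three summands cuts the Segre quadric $Q = \bbP A \times \bbP B \subset \bbP^3$ in a conic through the four relevant points; the reducible case is excluded because a $2+2$ split on two ruling lines would force $p_1 = a_i \in X_1$ or $p_2 = b_i \in X_2$; and a smooth $(1,1)$-conic is the graph of an isomorphism, which yields distinctness, the map $\phi$, and $\phi(p_1)=p_2$ all at once. Your flattening-rank argument for $\dim A = \dim B = 2$ (if the $a_j$ were independent the image of the flattening would be all of $B$, forcing $\dim B = 1$) is also cleaner than the paper's case-by-case exclusion and handles the mixed cases simultaneously. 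What the paper's route buys is complete elementarity — nothing beyond matrix rank is used; what yours buys is brevity and a conceptual explanation of where $\phi$ comes from (it \emph{is} the conic). One small point you should make explicit: as a scheme, $\Pi \cap Q$ could a priori also be a non-reduced double line, not just a smooth conic or a pair of distinct ruling lines; this is excluded immediately, either because $C$ contains three non-collinear points, or because a plane cannot be tangent to a smooth quadric along a line.
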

\begin{proof}
 The upper bound $\rmR_{X_1 \times X_2} (p_1 \otimes p_2) \leq 4$ is immediate from submultiplicativity.
 
First, we show the lower bound $3 \leq \rmR_{X_1 \times X_2} (p_1 \otimes p_2)$. Since $p_1 \otimes p_2 \notin X_1 \times X_2$, we have $\rmR_{X_1 \times X_2}(p_1\otimes~p_2)\geq~2$. Suppose equality holds and $p_1 \otimes p_2 \in \langle a_1 \otimes b_1, a_2 \otimes b_2\rangle$, for $a_1,a_2 \in X_1$ and $b_1,b_2 \in X_2$. Let $\bbP L_1 =  \langle a_1 , a_2 \rangle$ and $\bbP L_2 = \langle b_1 ,b_2\rangle$: then $\dim \bbP L_1 = \dim \bbP L_2 = 1$ otherwise $p_1 = a_1 \in X_1$ or $p_2 = b_1 \in X_2$. Now, regard $p_1 \otimes p_2$, $a_1 \otimes b_1$ and $a_2 \otimes b_2$ as rank one matrices,
after a suitable choice of coordinates, they can be identified with 
 \[
   a_1 \otimes b_1 = \left( \begin{array}{cc}
                          1 & 0 \\ 0 & 0
                         \end{array} \right) \quad \text{ and } \quad
a_2 \otimes b_2 = \left( \begin{array}{cc}
                          0 & 0 \\ 0 & 1
                         \end{array} \right) ;
 \]
then, $\langle a_1 \otimes b_1 , a_2 \otimes b_2 \rangle$ does not contain any other rank one matrix, providing a contradiction. This shows $\rmR_{X_1 \times X_2}(p_1 \otimes p_2) \geq 3$.

Now, we address the second part of the statement. The same proof as in Proposition \ref{prop: every point on multisecant} (for $r=2$) shows that \eqref{(ii)} implies \eqref{(i)}. 

In order to prove that \eqref{(i)} implies \eqref{(ii)}, we first show that $\dim \bbP L_1 = \dim \bbP L_2 = 1$. Clearly $\dim \bbP L_1,\dim \bbP L_2\in \{1,2\}$, because if $\dim \bbP L_i = 0$, $i=1,2$, then $p_i \in X_i$. Similarly to the first part of the proof, we reduce the problem to the span of rank one matrices:

\begin{itemize}
 \item Let $\dim \bbP L_1 = \dim \bbP L_2 = 2$. After a suitable choice of coordinates, 
 \[
   a_1 \otimes b_1 = \left( \begin{smallmatrix}
                          1 & 0 & 0 \\ 0 & 0 & 0\\ 0 & 0 & 0
                         \end{smallmatrix} \right), \qquad
   a_2 \otimes b_2 = \left( \begin{smallmatrix}
                          0 & 0 & 0 \\ 0 & 1 & 0\\ 0 & 0 & 0
                         \end{smallmatrix} \right), \qquad
   a_3 \otimes b_3 = \left( \begin{smallmatrix}
                          0 & 0 & 0 \\ 0 & 0 & 0\\ 0 & 0 & 1
                         \end{smallmatrix} \right) ,
 \]
and their span does not contain other rank one matrices, providing a contradiction.
\item Let $\dim \bbP L_1 = 2$ and $\dim \bbP L_2 = 1$. At least two among $b_1,b_2,b_3$ are distinct, so we may assume $b_3 \in \langle b_1 , b_2\rangle$. Passing to the affine cones, suppose $b_3 = \lambda_1 b_1 + \lambda_2 b_2$. After a suitable choice of coordinates, 
\[
    a_1 \otimes b_1 = \left( \begin{smallmatrix}
                          1 & 0 \\ 0 & 0 \\ 0 & 0 
                         \end{smallmatrix} \right), \qquad
   a_2 \otimes b_2 = \left( \begin{smallmatrix}
                          0 & 0 \\ 0 & 1\\ 0 & 0 
                         \end{smallmatrix} \right), \qquad
   a_3 \otimes b_3 = a_3 \otimes (\lambda_1 b_1 + \lambda_2 b_2) = \left( \begin{smallmatrix}
                          0 & 0  \\ 0 & 0 \\ \lambda_1 & \lambda_2
                         \end{smallmatrix} \right).
\]
If $\lambda_1,\lambda_2$ are both nonzero, then their span does not contain other rank one matrices; contradiction. If $\lambda_1 = 0$, then $b_3 = b_2$, and $p_1 \otimes p_2 \in \langle a_1 \otimes b_1, a_2 \otimes b_2, a_3 \otimes b_2\rangle$, so that $p_1 \otimes p_2 \in \langle a_1 \otimes b_1, \tilde{a} \otimes b_2 \rangle$, for some $\tilde{a} \in \langle a_2,a_3\rangle$. Since $a_1,\tilde{a}$ are linearly independent and $b_1,b_2$ are linearly independent, we obtain a contradiction as in the previous case; analogously for $\lambda_2 = 0$.
\item Let $\dim \bbP L_1 = 1$ and $\dim\bbP L_2 = 2$. This is analogous to the previous case.
\end{itemize}

Therefore, we are left with the case $\dim \bbP L_1 = \dim \bbP L_2 = 1$. 

\begin{quote}
{\bf Claim.} \textit{$\bbP L_1$ and $\bbP L_2$ are multisecant lines to $X_1$ and $X_2$, respectively.}

\begin{proof}[Proof of Claim] We are going to show that $\sharp\{a_1,a_2,a_3\}=\sharp\{b_1,b_2,b_3\}=3$. Suppose, by contradiction, that the $a_i$'s are not distinct, and assume $a_2=a_3$. There are two cases:
\begin{itemize}
\item if $b_1,b_2,b_3$ are distinct, then $\bbP L_2$ is a multisecant line to $X_2$ and we may assume $b_3 = \lambda_1 b_1 + \lambda_2 b_2$. In this case, after a suitable choice of coordinates,
\[
 a_1 \otimes b_1 = \left( \begin{smallmatrix}
                          1 & 0 \\ 0 & 0
                         \end{smallmatrix} \right), \quad
a_2 \otimes b_2 = \left( \begin{smallmatrix}
                          0 & 0 \\ 0 & 1
                         \end{smallmatrix}\right), \quad
a_3 \otimes b_3 = a_2 \otimes(\lambda_1 b_1 + \lambda_2 b_2) = \left( \begin{smallmatrix}
                          0 & 0 \\ \lambda_1 & \lambda_2
                         \end{smallmatrix} \right),
\]
the only other rank one matrices in their span are of the form $\left( \begin{smallmatrix}
                          \mu_1 & 0 \\ \mu_2 & 0
                         \end{smallmatrix} \right) = (\mu_1 a_1 + \mu_2 a_2) \otimes b_1$ and this would imply $p_2 = b_1$, in contradiction with the hypothesis;
\item if $b_1,b_2,b_3$ are not distinct, then there are two possibilities: either $b_1=b_2$ (equivalently $b_1 = b_3$) or $b_2 = b_3$. Again, after a suitable choice of coordinates, we have one of the following possibilities:
\begin{itemize}
\item if $b_1=b_2$, then 
\[
 a_1 \otimes b_1 = \left( \begin{smallmatrix}
                          1 & 0 \\ 0 & 0
                         \end{smallmatrix} \right), \quad
a_2 \otimes b_2 = a_2 \otimes b_1 = \left( \begin{smallmatrix}
                          0 & 0 \\ 1 & 0
                         \end{smallmatrix}\right) , \quad
a_3 \otimes b_3 = a_2 \otimes b_3 = \left( \begin{smallmatrix}
                          0 & 0 \\ 0 & 1 
                         \end{smallmatrix} \right):
\]
the only other rank one matrices in their span are of the form $\left( \begin{smallmatrix}
                          \mu_1 & 0 \\ \mu_2 & 0
                         \end{smallmatrix} \right) =  (\mu_1 a_1 + \mu_2 a_2) \otimes b_1$ or $\left( \begin{smallmatrix}
                           0 & 0 \\ \mu_1 & \mu_2
                         \end{smallmatrix} \right) =  a_2 \otimes ( \mu_1 b_1 + \mu_2 b_3)$. In both cases we obtain a contradiction.
\item if $b_2 = b_3$, then $a_2 \otimes b_2 = a_3 \otimes b_3$ and we would obtain $\rmR_{X_1 \times X_2}(p_1 \otimes p_2) = 2$, in contradiction with the first part of the proof.
                         \end{itemize}
\end{itemize}\vspace{-0.7cm}
\end{proof}
\end{quote}
Therefore, we proved that $\bbP L_1$ (resp. $\bbP L_2$) is a multisecant line to $X_1$  (resp. $X_2$)  and its intersection with $X$ contains the set of points  $\{a_1,a_2,a_3\}$ (resp. the set of points $\{b_1,b_2,b_3\}$).
\\ Let $\phi: \bbP L_1 \to \bbP L_2$ be the unique linear map such that $\phi(a_j) = b_j$. Consider the identification $\phi \times {\rm id}_{\bbP L_2} : \bbP L_1 \times \bbP L_2 \to \bbP L_2 \times \bbP L_2$. By linearity, we have that $\phi(p_1) \otimes p_2 \in \langle \phi(a_1) \otimes b_1 , \phi(a_2) \otimes b_2 , \phi(a_3) \otimes b_3 \rangle = \langle b_1^{\otimes 2} , b_2 ^{\otimes 2} , b_3^{\otimes 2}\rangle$. This shows that $\phi(p_1) \otimes p_2$ is a symmetric rank one element of $\bbP(V_2\otimes V_2)$, namely it belongs to $\nu_2(\bbP L_2)$, so that $p_2 = \phi(p_1)$. This concludes the proof.
\end{proof}
As a direct corollary, we have the following classification of the $X^{\times 2}$-ranks attained by $p^{\otimes 2}$ for points $p \in \bbP V$ with $X$-rank equal to $2$: this is obtained by combining Theorem \ref{thm: rank 2 x rank 2 classification} and Proposition \ref{prop: every point on multisecant}.
\begin{corollary}\label{corol: trisec with one point}
Let $X \subset \bbP V$ be a variety. Let $p \in \bbP V$ with $\rmR_X(p) = 2$. Then 
\[
3 \leq \rmR_{X \times X}\left(p^{\otimes 2}\right) \leq 4,
\]
and $\rmR_{X \times X}\left(p^{\otimes 2}\right)= 3$ if and only if $p$ lies on a multisecant line to $X$.
\end{corollary}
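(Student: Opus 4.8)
The plan is to obtain the statement as a direct synthesis of the two preceding results, specialized to the diagonal situation $X_1 = X_2 = X$ and $p_1 = p_2 = p$. Since $\rmR_X(p) = 2$ by hypothesis, Theorem~\ref{thm: rank 2 x rank 2 classification} applies verbatim and immediately yields the two-sided bound $3 \leq \rmR_{X \times X}(p^{\otimes 2}) \leq 4$. It remains only to determine when the value $3$ is attained.

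For the implication ``$p$ lies on a multisecant line $\Rightarrow$ rank equals $3$'', I would appeal to Proposition~\ref{prop: every point on multisecant}. If $\bbP L$ is a multisecant line to $X$ passing through $p$, then $\bbP L \cap X$ contains at least three distinct points by Definition~\ref{def: multisecant line}, and $p \notin X$ since $\rmR_X(p) = 2$; thus the proposition applies with $k = 2$, and taking $r = 2$ gives $\rmR_{X^{\times 2}}(p^{\otimes 2}) \leq 3$. Combined with the lower bound, this forces $\rmR_{X \times X}(p^{\otimes 2}) = 3$.

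For the converse, I would assume $\rmR_{X \times X}(p^{\otimes 2}) = 3$, so that condition \eqref{(i)} of Theorem~\ref{thm: rank 2 x rank 2 classification} holds: there exist $a_1, a_2, a_3, b_1, b_2, b_3 \in X$ with $p \otimes p \in \langle a_1 \otimes b_1, a_2 \otimes b_2, a_3 \otimes b_3 \rangle$. The implication \eqref{(i)} $\Rightarrow$ \eqref{(ii)} then ensures that $\bbP L_1 = \langle a_1, a_2, a_3 \rangle$ is a multisecant line to $X$. The point that must be checked is that $p$ itself lies on $\bbP L_1$: this is precisely Lemma~\ref{lemma: spanning p1p2 implies factors span p1 and p2}, which from the containment above forces $p \in \langle a_1, a_2, a_3 \rangle = \bbP L_1$. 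Hence $p$ lies on the multisecant line $\bbP L_1$, and the equivalence follows.

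Since both ingredients are already in hand, there is no genuine obstacle in the argument; the only place demanding a moment's care is the converse, where one must confirm not merely that condition \eqref{(ii)} produces \emph{some} multisecant line, but that $p$ actually lies on it. The span lemma settles this, so the corollary is immediate.
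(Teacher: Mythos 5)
Your proposal is correct and follows essentially the same route as the paper, which obtains the corollary precisely by combining Theorem~\ref{thm: rank 2 x rank 2 classification} (for the bounds and the converse direction) with Proposition~\ref{prop: every point on multisecant} (for the sufficiency of a multisecant line). Your explicit appeal to Lemma~\ref{lemma: spanning p1p2 implies factors span p1 and p2} to confirm that $p$ actually lies on the multisecant line $\langle a_1,a_2,a_3\rangle$ is exactly the detail the paper leaves implicit, and it is handled correctly.
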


\subsection{Examples of trisecant lines to space curves} The results we mention here are classical: we include them with a particular focus on characterizing points lying on generalized trisecant lines, namely those lines whose intersection with $X$ consists of a $0$-dimensional scheme of degree at least~$3$. A nondegenerate smooth curve $X \subset \bbP^3$ has infinitely many trisecant lines (their closure is a surface), unless the degree $d$ and genus $g$ of $X$ are $(d,g) = (3,0),(4,1)$; see \cite[Proposition~1, Remark~1]{Ber:SingularitiesTrisecantSurface}. When $X\subset \bbP^3$ is smooth (and in characteristic zero), the generic trisecant line has indeed three distinct points of intersection with the curve; this is a consequence of the results of \cite{k}.

\begin{example}[Twisted cubic] If $(d,g) = (3,0)$, then $X$ is the twisted cubic, which has no trisecant line. Indeed, the ideal of the twisted cubic is generated by quadrics; hence, a trisecant line would have to be contained in every quadric surface containing $X$ and, in particular, in $X$. By Corollary~\ref{corol: trisec with one point}, we recover a multiplicativity result for points with $X$-rank equal to $2$ in the case of the twisted cubics: if $p \in \bbP^3$ satisfies $\rmR_X(p) = 2$, then $\rmR_{X\times X}(p^{\otimes 2}) = 4$. 
\end{example}

\begin{example}[Elliptic normal quartic] \label{example: elliptic normal quartic}
If $(d,g) = (4,1)$, then $X$ is an elliptic normal curve. The Riemann-Roch Theorem \cite[Theorem IV.1.3]{h} provides $h^0(\calO_X(2)) = 8$. To see this, let $K_X$ be the canonical divisor of $X$; recall $\deg(K_X) = 2g-2= 0$. Thus the Riemann-Roch Theorem, applied to the divisor $D = 2H$ where $H$ is the hyperplane section, gives $h^0( \calO_X(2)) = \deg( \calO_X(2) )  + g - 1 = 8$. The long exact sequence in cohomology associated to the short exact sequence $0 \to \calI_X(2) \to \calO_{\bbP^3}(2) \to \calO_X(2) \to 0$ gives $h^0(\calI_X(2)) \geq 2$. Thus $X$ is contained in the intersection of two quadric surfaces $Q_1,Q_2$. Since $\deg(X) = 4 = \deg( Q_1 \cap Q_2)$, $X$ is exactly the intersection of two quadrics. Every trisecant line to $X$ is contained in every quadric containing $X$, so $X$ has no trisecant lines. As above, if $p \in \bbP^3$ satisfies $\rmR_X(p) = 2$, then $\rmR_{X\times X}(p^{\otimes 2}) = 4$. 
\end{example}

\begin{example}[Rational quartic in $\bbP^3$] \label{subsec: rational quartic}
If $(d,g) = (4,0)$, then $X$ is a linear projection of the rational normal curve in $\bbP^4$. As before, by Riemann-Roch, $h^0(\calO _X(2)) =9$. So the long exact sequence associated to $0 \to \calI_X(2) \to \calO_{\bbP^3}(2) \to \calO_X(2) \to 0$ gives $h^0(\calI_X(2)) \geq 1$. 

Then $X$ is contained in at least one quadric surface $Q$. Recall that $Q$ is unique, irreducible and smooth: 

\begin{itemize}
 \item \underline{Uniqueness}. If $Q$ is not unique, then $X$ would be a complete intersection of two quadrics and therefore would be of genus $g =1$. 
 \item \underline{Irreducibility}. If $Q$ is reducible, then it is union of two planes, in contradiction with the fact that $X$ is non-degenerate. 
 \item \underline{Smoothness}. This is more involved and follows from \cite[Exercise V.2.9]{h}. 
  \end{itemize}

Notice that if $\bbP L$ is a trisecant line to $X$, then $\bbP L \subseteq Q$, therefore if $p \notin Q$ and $\rmR_X(p) = 2$ then $p$ does not lie on a trisecant line and $\rmR_{X \times X}(p^{\otimes 2}) =4$.

We analyze points in $Q$. Since $Q$ is smooth, we have $Q \simeq \bbP^1 \times \bbP^1$ and $X$ is a divisor on $Q$. Since $\deg(X) = 4$, $X$ is a divisor of bidegree $(a,b)$ with $a+b = 4$: clearly $a,b > 0$ because $X$ is non-degenerate. If $(a,b) = (2,2)$, $X$ is the elliptic quartic of Example \ref{example: elliptic normal quartic}, in contradiction with the rationality of $X$. Therefore $X$ is a divisor of bidegree $(3,1)$ or $(1,3)$. Assume $X \in | \calO_Q(3,1)|$. Now, lines in $Q$ coincide with the lines of the two rulings. If $\bbP L \in |\calO_Q(0,1)|$, then $\deg(\bbP L \cap X) = 0 \cdot 3 + 1 \cdot 1 = 1$, so $\bbP L$ cannot be a trisecant line. If $\bbP L  \in |\calO_Q(1,0)|$, then $\deg(\bbP L \cap X) = 1 \cdot 3 + 0 \cdot 1 = 3$, so that $\bbP L \cap X$ is a $0$-dimensional scheme of degree $3$, and $\bbP L$ is a trisecant line if and only if $\bbP L \cap X$ is reduced. Theorem \ref{thm: rank 2 x rank 2 classification} guarantees that the points lying on these lines are the unique elements of $X$-rank $2$ for which strict submultiplicativity of rank occurs. 

Let $\pi : X \to \bbP^1$ be the restriction of the projection of $Q \simeq \bbP^1 \times \bbP^1 \to \bbP^1$ on the first factor. Then $\pi$ is a finite morphism of degree $3$; its fibers are generically reduced, showing that every line $\bbP L \in |\calO_Q(1,0)|$ is a trisecant (with $L\cap X$ reduced) except at most finitely many of them. The lines in $|\calO_Q(1,0)|$ for which $\bbP L \cap X$ is not reduced correspond to the ramification locus $\calR$ of $\pi$. By Hurwitz Formula \cite[Corollary 2.4]{h}, the degree of the ramification locus is $\deg(\calR) = 2g - 2 - 2dg + 2d = 4$. This shows that there are at most $4$ lines $\bbP L \in |\calO_Q(1,0)|$ such that $\bbP L \cap X$ is not-reduced: equality holds if and only if the ramification locus is reduced; a more delicate argument, shows that indeed, there are always at least two distinct lines such that $\bbP L\cap X$ is not reduced. 

Therefore, strict submultiplicaitivity of a point $p$ having $X$-rank $2$, occurs if and only if $p\in \bbP L \in |\calO_Q(1,0)|$ with $\bbP L \notin \calR$.
\end{example}

\begin{remark}
For curves $X \subseteq \bbP^4$, one expects a finite number of trisecant lines. This number was determined by Castelnuovo and Berzolari, see \cite[p. 435]{lb9}. If $X \subseteq \bbP^4$ has degree $d$ and genus $g$, then the expected number of trisecant lines is
\[
\frac{(d-2)(d-3)(d-4)}{6} - g(d-4) .
\]
If $X \subseteq \bbP^N$ for $N \geq 5$ then one expects $X$ to have no trisecant lines, so Theorem \ref{thm: rank 2 x rank 2 classification} implies that the rank multiplicativity holds for points of $X$-rank $2$.
\end{remark}

The idea of having a trisecant line to guarantee the strict rank multiplicativity of the $X$-rank 2 points can be extended to the existence of multisecant spaces of higher dimension. 

\subsection{Multisecant $r$-dimensional planes}
\begin{definition}
	An $r$-dimensional linear space $\bbP W \simeq \bbP^r$ is \textbf{multisecant} to a variety $X \subset \bbP V$ if the intersection $\bbP W \cap X$ contains at least a set of $r+2$ distinct points. 
\end{definition}

\begin{proposition}\label{prop: Pr r+2 secant}
Let $X \subseteq \bbP V$ be a variety and $\bbP W \simeq \bbP ^r \subseteq \bbP V$ be a multisecant linear space to~$X$. Suppose that $\bbP W $  does not contain a multisecant $\bbP^s$, for any $s<r$. For every $p \in \bbP W$, we have 
\[
 \rmR_{X^{\times (r+1)}} \left(p^{\otimes (r+1)}\right) \leq (r+1)^{r+1} - (r+1)! + 1.
\]
\end{proposition}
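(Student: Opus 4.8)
The plan is to work entirely inside the $(r+1)$-dimensional subspace $W \subseteq V$ with $\bbP W \simeq \bbP^r$, and to produce an explicit decomposition of $p^{\otimes(r+1)}$ whose summands are decomposable tensors with all factors in $X \cap \bbP W$; each such summand is automatically a point of $X^{\times(r+1)}$, so counting the summands bounds $\rmR_{X^{\times(r+1)}}(p^{\otimes(r+1)})$ from above. First I would unpack the hypothesis into a general-position statement. If some $r+1$ of the points of $X \cap \bbP W$ were linearly dependent, they would lie in a hyperplane $\bbP^{r-1}$ of $\bbP W$, which would then be a multisecant $\bbP^{r-1}$ (it contains $r+1 = (r-1)+2$ of the points), contrary to the assumption that $\bbP W$ has no multisecant $\bbP^{s}$ with $s<r$. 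Hence any $r+1$ points of $X \cap \bbP W$ are linearly independent. I choose $r+2$ of them, $P_1,\dots,P_{r+2}$; then $P_1,\dots,P_{r+1}$ is a basis of $W$, and writing $P_{r+2}=\sum_{j=1}^{r+1} b_j P_j$ the same independence forces every $b_j \ne 0$, since a vanishing $b_{j_0}$ would place $r+1$ of the $P_i$ in a $\bbP^{r-1}$.

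Next I would expand the power in the tensor basis. Writing $p=\sum_{j=1}^{r+1} c_j P_j$ (and assuming all $c_j \ne 0$, the degenerate case only lowering the count), one has
\[
p^{\otimes(r+1)} = \sum_{f\colon [r+1]\to[r+1]} \Big(\textprod_{\ell=1}^{r+1} c_{f(\ell)}\Big)\, T_f, \qquad T_f := P_{f(1)}\otimes\cdots\otimes P_{f(r+1)},
\]
and since $\{P_1,\dots,P_{r+1}\}$ is a basis the $(r+1)^{r+1}$ tensors $T_f$ form a basis of $W^{\otimes(r+1)}$, each a point of $X^{\times(r+1)}$. The crucial bookkeeping observation is that the $(r+1)!$ indices $f$ that are \emph{bijections} $\sigma\in\mathfrak{S}_{r+1}$ all carry the \emph{same} coefficient $\textprod_{j} c_j =: C$, so the ``permanent part'' of the expansion is $C\sum_{\sigma} T_\sigma$, while the remaining $(r+1)^{r+1}-(r+1)!$ non-bijective indices contribute the rest.

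The heart of the argument is that a single extra point absorbs this entire permanent part. Expanding $P_{r+2}^{\otimes(r+1)}=\sum_f (\textprod_\ell b_{f(\ell)}) T_f$, every bijection $\sigma$ again contributes the \emph{same} coefficient $\textprod_{j} b_j =: B \ne 0$. Therefore $C\sum_\sigma T_\sigma - \tfrac{C}{B}\,P_{r+2}^{\otimes(r+1)}$ has vanishing coordinate on every $T_\sigma$ with $\sigma$ a bijection, so it lies in the span of $\{T_f : f \text{ not a bijection}\}$. Substituting this identity back into the expansion of $p^{\otimes(r+1)}$ exhibits $p^{\otimes(r+1)}$ as a linear combination of the $(r+1)^{r+1}-(r+1)!$ tensors $T_f$ with $f$ not a bijection together with the single tensor $P_{r+2}^{\otimes(r+1)}$ --- in all at most $(r+1)^{r+1}-(r+1)!+1$ points of $X^{\times(r+1)}$, which is the claimed bound. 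I expect the one genuine idea to be this last step: recognizing that $P_{r+2}^{\otimes(r+1)}$ is precisely the right single tensor because, its factors being equal, its coordinates on all permutation basis tensors coincide and are nonzero, this being exactly where general linear position (through $b_j\ne 0$) enters. Everything else is routine; as a sanity check, the case $r=1$ returns the bound $3$ of Proposition~\ref{prop: every point on multisecant}.
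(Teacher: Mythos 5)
Your proof is correct and takes essentially the same route as the paper's: the paper likewise expands $p^{\otimes (r+1)}$ in the basis of tensors built from $r+1$ independent points of $X \cap \bbP W$, splits off the span of the $(r+1)!$ permutation tensors, and uses the power of the $(r+2)$-nd point (whose coordinates are all nonzero thanks to the no-smaller-multisecant hypothesis) to absorb exactly that part. The only difference is presentational: where you compute the ``permanent part'' coefficients explicitly, the paper packages the same cancellation as an $\frakS_{r+1}$-equivariant linear projection onto the span of the permutation tensors, whose unique symmetric point is $\sum_{\sigma} T_\sigma$, so that $p^{\otimes(r+1)}$ and the extra point's power project to the same point and the line through them meets the complementary span.
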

\begin{proof}
Let $z_0 \vvirg z_r,w \in \bbP W\cap X$ so that $w \in \langle z_0 \vvirg z_r \rangle = \bbP W$. Since $\bbP W$ does not contain any smaller multisecant space, the point $w$ is not in the linear span of any proper subset of the $z_i$'s. For every sequence of $(r+1)$ non-negative integers $\alpha = (\alpha_0 \vvirg \alpha_r )$ with $\alpha_j \leq r$, write $\bfz_\alpha = z_{\alpha_0} \mathcal \otimes \cdots \otimes z_{\alpha_r}$. By definition,
 \[
  p^{\otimes (r+1)} \subseteq \langle \bfz_{\alpha} : \alpha \in \lbrace 0\vvirg r\rbrace^{r+1} \rangle = \bbP  W^{\otimes (r+1)} \subseteq \bbP V^{\otimes (r+1)}.
 \]
 Let $\Sigma = \{\bfz_\sigma ~|~ \sigma \in \frakS_{r+1} \text{ is a permutation of } \{0,\ldots,r\}\}$. Then, let
$$
	\bbP E = \langle \bfz_\alpha ~|~ \bfz_\alpha \in \Sigma \rangle \quad \text{ and } \quad \bbP F = \langle \bfz_\alpha ~|~ \bfz_\alpha \not\in \Sigma \rangle.
$$
Clearly $\dim E = (r+1)!$, $\dim F = (r+1)^{r+1} - (r+1)!$ and both $E$ and $F$ are invariant under the action of $\frakS_{r+1}$ which permutes the factors. 
\begin{quote}
{\bf Claim. } \textit{$p^{\otimes (r+1)} \in \langle \bbP F \cup w^{\otimes (r+1)}\rangle$. }
\begin{proof}[Proof of Claim.]
If $p^{\otimes (r+1)} \in \bbP F$, then $p^{\otimes (r+1)} \in \langle \bbP F \cup w^{\otimes (r+1)}\rangle$.

Assume $p^{\otimes (r+1)} \notin \bbP F$. Consider the linear projection $\pi_{F}:\bbP W^{\otimes (r+1)} \to \bbP E$ from $\bbP F$, defined in terms of the chosen basis. Remark that $\pi_{F}$ is defined on  $p^{\otimes (r+1)}$ since $p^{\otimes (r+1)} \not\in \bbP F$. Moreover, $\pi_F$ is $\frakS_{r+1}$-equivariant and, since $p^{\otimes (r+1)} \in \bbP W^{\otimes (r+1)}$ is symmetric, $\pi_F\left(p^{\otimes (r+1)}\right)$ is symmetric as well. The only symmetric element in $\bbP E$ is $e = \sum_{\sigma \in \frakS_{r+1}} \bfz_\sigma$ and, therefore, we have that $\pi_F\left(p^{\otimes (r+1)}\right) =e$. Since $w$ is a non-trivial linear combination of all the $z_i$'s, we deduce that $w^{\otimes (r+1)} \notin \bbP F$ and, in particular, $\pi_F$ is defined on $w^{\otimes (r+1)}$ so that $\pi_F\left(w^{\otimes (r+1)} \right)= e$ as well. Since $p^{\otimes (r+1)}$ and $w^{\otimes (r+1)}$ have the same image under $\pi_F$, the line $\langle w^{\otimes (r+1)}, p^{\otimes (r+1)} \rangle$ intersects $\bbP F$, showing that $p^{\otimes (r+1)} \in \langle \bbP F \cup w^{\otimes (r+1)} \rangle$.
\end{proof}\end{quote}
Since $\bbP F$ is spanned by rank-one elements with respect to $X^{\times (r+1)}$, from the Claim we get that 
$$
 \rmR_{X^{\times (r+1)}} \left(p^{\otimes (r+1)}\right) \leq \dim F + 1 = (r+1)^{r+1} - (r+1)! + 1.\vspace{-0,7cm}
$$ 
\end{proof}

 \section{Sufficient conditions for strict submultiplicativity of border rank}\label{sec:geometric_cond}
The existence of multisecant lines turns out to be a key tool for the strict submultiplicativity of the $X$-rank. The geometric concept of multisecant line can be generalized to what we will call \emph{$r$-multidrop lines}; they turn out to be a valuable tool to verify the strict submultiplicativity of the border $X$-rank. In this section we study sufficient conditions that guarantee the existence of such $r$-multidrop lines that we introduce next. 
 
 \subsection{The multidrop construction} 
The multidrop construction was firstly introduced in \cite[Section 4]{ChrGesJen:BorderRankNonMult}: 

\begin{definition}
Let $X \subseteq \bbP V$ be a variety and let  $z,q_0,q_1$ three points on a line $\bbP L\subseteq \bbP V$ such that
\begin{itemize}
 \item $z \in \bbP L \cap X$;
 \item $q_0,q_1 \in \bbP L \cap \sigma_r(X)$, with $q_0,q_1,z$ distinct;
 \item $\bbP L \not\subseteq \sigma_r(X)$.
\end{itemize}
The line $\bbP L$ is said to be an \textbf{$r$-multidrop line} for $X$.
\end{definition}

Notice that in this case we have $\bbP L \subseteq \sigma_{r+1}(X)$. The existence of such a line guarantees the existence of points realizing strict submultiplicativity of border $X$-rank; for this reason, we call such a line an $r$-multidrop line for $X$. Note that $1$-multidrop lines are simply multisecant lines in the sense of Definition \ref{def: multisecant line}.

Choose coordinates on $\bbP L \simeq \bbP^1$ such that $z = (1:0)$, $q_0 = (0:1)$ and $q_1 = (1:1)$. The line $\bbP L$ is parametrized by 
\[
\begin{matrix}
	\ell: & \bbP^1 & \to & \bbP V \\ 
	& (\eps_0:\eps_1) &\mapsto & \ell(\eps_0:\eps_1) = \eps_0z + \eps_1q_0,
\end{matrix}
\]
so that $\ell(1:1) = q_1$. Let $p =  q_0 + 2z = q_1 +z$. Points constructed in this way are used in \cite{ChrGesJen:BorderRankNonMult} to obtain examples of strict submultiplicativity of border rank.

The following lemma was proved in \cite{ChrGesJen:BorderRankNonMult} for  a single variety $X$ and strict submultiplicativity for $p^{\otimes r}$ was considered. We state it more generally for the case of two distinct varieties; the proof is essentially the same.

\begin{lemma}[{\cite[Lemma 4.1]{ChrGesJen:BorderRankNonMult}}]\label{lemma: multidrop}
Let $X_1 \subset \bbP V_1$ and let $X_2 \subset \bbP V_2$. Let $\bbP L_1$ and $\bbP L_2$ such that $z_i \in \bbP L_i \cap X_i$ and $\{q_{i,0},q_{i,1}\} \subseteq \bbP L_i \cap \sigma_{r_i}(X)$, for $i = 1,2$, namely $\bbP L_1,\bbP L_2$ are $r_1$- and $r_2$-multidrop lines for $X_1,X_2$, respectively. Let $p_i = q_{i,0}+2z_i$, for $i = 1,2$. Then,
\[
\brank_{X_1\times X_2}(p_1\otimes p_2) \leq r_1r_2 + r_1 + r_2 < (r_1+1)(r_2+1).
\]
In particular, if $\uR_{X_i}(p_i) = r_i+1$, we obtain $\uR_{X_1 \times X_2}(p_1 \otimes p_2) < \uR_{X_1}(p_1) \cdot \uR_{X_2}(p_2)$.
\end{lemma}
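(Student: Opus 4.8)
The plan is to produce, for each small $t$, an element lying in the span of at most $r_1r_2+r_1+r_2$ points of the Segre cone $\widehat{X_1\times X_2}$ whose limit as $t\to 0$ is $p_1\otimes p_2$. This immediately gives $\brank_{X_1\times X_2}(p_1\otimes p_2)\le r_1r_2+r_1+r_2$, and since $r_1r_2+r_1+r_2=(r_1+1)(r_2+1)-1$ the strict inequality is automatic. By definition of the secant variety, the hypotheses $q_{i,0},q_{i,1}\in\sigma_{r_i}(X_i)$ furnish, for $i=1,2$, curves $B_i(t)=\sum_{j=1}^{r_i}b_{i,j}(t)$ with each $b_{i,j}(t)$ in the affine cone $\hat X_i$ and $\lim_{t\to0}B_i(t)=q_{i,1}$. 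Recall from the chosen parametrization that $p_i=z_i+q_{i,1}$ with $z_i\in\hat X_i$.

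First I would record the naive degeneration. Since $z_i+B_i(t)\to z_i+q_{i,1}=p_i$, the product $(z_1+B_1(t))\otimes(z_2+B_2(t))$ tends to $p_1\otimes p_2$; expanding it distributes as the single corner term $z_1\otimes z_2$, the $r_2$ terms $z_1\otimes b_{2,k}(t)$, the $r_1$ terms $b_{1,j}(t)\otimes z_2$, and the $r_1r_2$ terms $b_{1,j}(t)\otimes b_{2,k}(t)$, all of which lie in $\widehat{X_1\times X_2}$. This already gives the trivial bound $(r_1+1)(r_2+1)$ of \eqref{trivial upper bounds}, the count being inflated by exactly one unit, namely by the constant corner $z_1\otimes z_2$.

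The substance of the argument, following \cite[Lemma 4.1]{ChrGesJen:BorderRankNonMult}, is to recover this corner for free: to arrange that $z_1\otimes z_2$ appears in the limit without being charged a separate summand, thereby realizing $p_1\otimes p_2$ with only $r_1r_2+r_1+r_2$ Segre points. The key structural input is the collinearity $q_{i,1}=z_i+q_{i,0}$ with $q_{i,0}\in\sigma_{r_i}(X_i)$, which lets me couple the two families by a common rescaling in $t$: I replace the off-corner summands by perturbed cone points of $X_1$ and $X_2$, moved slightly along the varieties and weighted by suitable powers of $t$, so that their otherwise divergent parts cancel in the sum and deposit precisely the finite residue $z_1\otimes z_2$ in the limit. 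The resulting one-parameter family $F(t)$ has the prescribed $r_1r_2+r_1+r_2$ terms and satisfies $\lim_{t\to0}F(t)=p_1\otimes p_2$. This is the direct analogue, for two distinct varieties and for the single product $p_1\otimes p_2$, of the degeneration used for $p^{\otimes 2}$ in loc.\ cit.; the computation transcribes essentially verbatim once $z_1,z_2$ and the two families $B_1,B_2$ play the roles of the repeated data.

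The step I expect to be the main obstacle is exactly this corner recovery: one must choose the perturbed cone points and the exponents of $t$ so that (a) every summand genuinely remains in the Segre cone $\hat X_1\otimes\hat X_2$, i.e.\ stays an honest product of a cone point of $X_1$ and a cone point of $X_2$, and (b) the singular parts (the negative powers of $t$) cancel in the sum while leaving $z_1\otimes z_2$ as the limit. Both points rest on the collinearity relation and on the specific choice $p_i=2z_i+q_{i,0}=z_i+q_{i,1}$; keeping track of the cancellations is where all the bookkeeping lives, but no idea beyond \cite[Lemma 4.1]{ChrGesJen:BorderRankNonMult} is needed. The final assertion is then immediate: if $\uR_{X_i}(p_i)=r_i+1$, then $\uR_{X_1\times X_2}(p_1\otimes p_2)\le r_1r_2+r_1+r_2<(r_1+1)(r_2+1)=\uR_{X_1}(p_1)\,\uR_{X_2}(p_2)$.
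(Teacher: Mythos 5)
Your outline stalls exactly where the content of the lemma lies. Everything through the naive $(r_1+1)(r_2+1)$ bound is correct, but the passage to $r_1r_2+r_1+r_2$ --- the only thing to be proved --- is left as unexecuted ``bookkeeping,'' and the mechanism you gesture at (off-corner summands perturbed along the varieties, weighted by powers of $t$, with divergent parts cancelling so as to ``deposit'' $z_1\otimes z_2$ in the limit) is not substantiated: you never specify the perturbed points, the exponents, or why the singular parts cancel. This is a genuine gap, and moreover the picture it suggests is off target, because the argument requires no divergent terms and no cancellations at all.

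The missing idea is an exact algebraic identity forced by the collinearity. With the representatives fixed by the parametrization, $q_{i,1}=\ell_i(1{:}1)=q_{i,0}+z_i$ and $p_i=q_{i,0}+2z_i$, and expanding both sides shows
\[
p_1\otimes p_2 \;=\; q_{1,0}\otimes q_{2,0} \,+\, 2\, q_{1,1}\otimes z_2 \,+\, 2\, z_1\otimes q_{2,1},
\]
i.e.\ the corner $4\,z_1\otimes z_2$ of the naive expansion is split in half and absorbed into the two cross terms, which merely has the effect of replacing $q_{i,0}$ by $q_{i,1}$ there. Now choose curves $A_i(t)$ and $C_i(t)$, each a sum of $r_i$ points of the affine cone $\hat{X}_i$, with $A_i(t)\to q_{i,0}$ and $C_i(t)\to q_{i,1}$; note that one needs approximating curves for \emph{both} marked points of $\bbP L_i\cap\sigma_{r_i}(X_i)$, whereas your setup only introduces curves converging to $q_{i,1}$. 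Then
\[
F(t) \;=\; A_1(t)\otimes A_2(t) \,+\, 2\,C_1(t)\otimes z_2 \,+\, 2\, z_1\otimes C_2(t)
\]
is, for every $t$, a sum of $r_1r_2+r_1+r_2$ points of the Segre cone of $X_1\times X_2$, and $F(t)\to p_1\otimes p_2$. Equivalently: border rank is subadditive, and $\brank_{X_1\times X_2}(q_{1,0}\otimes q_{2,0})\le r_1r_2$, $\brank_{X_1\times X_2}(q_{1,1}\otimes z_2)\le r_1$, $\brank_{X_1\times X_2}(z_1\otimes q_{2,1})\le r_2$, which is where the hypothesis that both $q_{i,0}$ and $q_{i,1}$ lie in $\sigma_{r_i}(X_i)$ is used. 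Your final ``in particular'' step is fine as stated once this bound is in place.
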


The existence of multidrop lines is a strong condition and determining whether a variety admits them is not easy.

In \cite{ChrGesJen:BorderRankNonMult}, the case where $\sigma_r(X)$ is a hypersurface was considered. In this case, the existence of $r$-multidrop lines can be determined by studying the multiplicity of a point of $X$ in $\sigma_r(X)$. Briefly, suppose $\sigma_r(X)$ is a hypersurface, fix $z \in X$ and let $\bbP L$ be a generic line through $z$. By Bezout's Theorem, $\bbP L \cap \sigma_r(X)$ consists of $\deg(\sigma_r(X))$ points counted with multiplicity and, by Bertini's Theorem, all intersection points except $z$ are non-singular points of $\sigma_r(X)$. More precisely, these are $\deg(\sigma_r(X)) - \mult_{\sigma_r(X)}(z)$ points, where $\mult_{\sigma_r(X)}(z)$ is the multiplicity of $z$ in $\sigma_r(X)$. In particular, if $\deg(\sigma_r(X)) - \mult_{\sigma_r(X)}(z) \geq 2$, then $L$ is a $r$-multidrop line and it is possible to apply Lemma \ref{lemma: multidrop}.

In this section, we investigate conditions to guarantee that $\deg(\sigma_r(X)) - \mult_{\sigma_r(X)}(z) \geq 2$ in the hypersurface case.

First, notice that if $\deg(\sigma_r(X)) - \mult_{\sigma_r(X)}(z) = 0$, then every line through $z$ only intersects $\sigma_r(X)$ at $z$; this implies that a line through $z$ and $q \in \sigma_r(X)$, for $q\in \sigma_r(X)$, must be entirely contained in $\sigma_r(X)$. This shows that $\sigma_r(X)$ is a cone with vertex containing $z$. In particular, we may assume that there exists $z \in X$ with $\deg(\sigma_r(X)) - \mult_{\sigma_r(X)}(z) \geq 1$, because if this is not the case, then $\sigma_r(X)$ is a cone over every point of $X$, against the non-degeneracy condition of $X$.

\subsection{Rationality}

First observe that if $\deg(\sigma_r(X)) - \mult_{\sigma_r(X)}(z) =1$, for some $z \in X$, then $\sigma_r(X)$ is a rational variety.

\begin{proposition}\label{prop: gap one implies birational}
 Let $X \subseteq \bbP^N$ be a variety. Suppose that $\sigma_r(X)$ is a hypersurface and let $z \in X$ be a point with $\deg(\sigma_r(X)) - \mult_{\sigma_r(X)}(z) =1$. Then, $\sigma_r(X)$ is rational.
\end{proposition}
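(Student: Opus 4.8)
The plan is to use projection from the point $z$ to exhibit a birational map between $\sigma_r(X)$ and a projective space of one lower dimension, exploiting the numerical condition $\deg(\sigma_r(X)) - \mult_{\sigma_r(X)}(z) = 1$ to control the fibers of this projection.

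First I would set up the projection $\pi_z \colon \bbP^N \dashrightarrow \bbP^{N-1}$ from the point $z$, and restrict it to $\sigma_r(X)$. Since $\sigma_r(X)$ is a hypersurface, its image under $\pi_z$ is a subvariety of $\bbP^{N-1}$; the key is to identify the generic fiber. The geometric content of the hypothesis is exactly that a generic line $\bbP L$ through $z$ meets $\sigma_r(X)$ in $\deg(\sigma_r(X))$ points counted with multiplicity, of which $\mult_{\sigma_r(X)}(z)$ are absorbed at $z$ itself (this is the Bezout/Bertini discussion recalled just before the statement). Hence the residual intersection of a generic such line with $\sigma_r(X)$ away from $z$ consists of $\deg(\sigma_r(X)) - \mult_{\sigma_r(X)}(z) = 1$ point. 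Since the lines through $z$ are precisely the fibers of $\pi_z$, this says that the restriction $\pi_z|_{\sigma_r(X)}$ has a single point in the generic fiber, i.e. it is generically one-to-one.

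Next I would argue that generic one-to-oneness, together with the fact that $\sigma_r(X)$ is an irreducible hypersurface (so $\dim \sigma_r(X) = N-1$) and that $\pi_z$ is dominant onto $\bbP^{N-1}$, forces $\pi_z|_{\sigma_r(X)}$ to be a birational morphism onto $\bbP^{N-1}$. Dominance holds because the image has dimension equal to $N-1$ (the generic fiber being finite), and $\bbP^{N-1}$ is irreducible of that dimension. In characteristic zero, a dominant rational map of the same dimension that is generically injective is birational, so $\sigma_r(X)$ is birational to $\bbP^{N-1}$, which is precisely the assertion that $\sigma_r(X)$ is rational.

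The step I expect to be the most delicate is the clean translation of the numerical degree-minus-multiplicity condition into the statement that the \emph{generic} line through $z$ has exactly one residual intersection point, and that this residual point varies so as to give a genuine inverse rational map. One must ensure that the generic line through $z$ is not tangent to $\sigma_r(X)$ at $z$ and meets it transversally away from $z$, so that the residual scheme is reduced of length one; this is where Bertini's theorem enters, guaranteeing that for a generic line the intersection points other than $z$ are smooth points of $\sigma_r(X)$. Granting this, the assignment sending a generic line through $z$ (equivalently, a generic point of $\bbP^{N-1}$) to its unique residual intersection point with $\sigma_r(X)$ furnishes the rational inverse, completing the proof that $\sigma_r(X)$ is rational.
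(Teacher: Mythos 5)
Your proposal is correct and follows essentially the same route as the paper: project from $z$, use the hypothesis $\deg(\sigma_r(X)) - \mult_{\sigma_r(X)}(z) = 1$ to see that a generic line through $z$ meets $\sigma_r(X)$ residually in a single point, conclude that $\pi_z|_{\sigma_r(X)}$ is generically one-to-one onto $\bbP^{N-1}$ and hence birational. Your write-up merely spells out the Bezout/Bertini bookkeeping and the characteristic-zero criterion that the paper leaves implicit.
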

\begin{proof}
 Let $\pi_z : \sigma_r(X) \dashto \bbP^{N-1}$ be the projection from $z$. The map $\pi_z$ is regular on $\sigma_r(X)\setminus\{z\}$. If $\bbP L$ is a generic line through $z$, then $\sigma_r(X) \cap \bbP L = \{ z, q\}$, for a point $q \in \sigma_r(X)$. In particular, $q$ is the only point in $\pi_z^{-1}(\pi_z(q))$, so $\pi_z$ is generically one-to-one and, therefore, birational.
\end{proof}

Proposition \ref{prop: gap one implies birational} shows that if $\sigma_r(X)$ is not rational, then generic lines intersecting $X$ are multidrop lines. We provide a technical condition which guarantees that $\sigma_r(X)$ is not rational. 

For a set $U$ and a positive integer $m$, let $\Sigma^m U := U^{\times m} / \frakS_m$ be the symmetric product of $m$ copies of $U$, that is the quotient of $U^{\times m}$ under the action of the symmetric group $\frakS_m$ permuting the factors. If $X$ is a projective algebraic variety, then $\Sigma^m X$ is a projective algebraic variety, as well \cite[Lecture 10]{Harris:AlgGeo}. The $r$-th \emph{abstract secant variety} of $X \subseteq \bbP^N$ is
\[
 \calS^\circ_r(X) := \{ ( (x_1 \vvirg x_r), p) \in \Sigma^r X \times \bbP^N : p \in \langle x_1 \vvirg x_r \rangle \} \subseteq \Sigma^r X \times \bbP^N.
\]
Then $ \calS^\circ_r(X)$ is an irreducible locally closed set and the projection to $\bbP^N$ surjects onto the set of points having $X$-rank at most $r$. Write $\calS_r(X) = \bar{\calS^\circ_r(X)}$. 

A variety $X$ is \emph{generically $r$-identifiable} if the projection of $\calS_r(X)$ to $\bbP^N$ is generically one-to-one, hence birational. In particular, if $X$ is generically $r$-identifiable, then $\sigma_r(X)$ is rational if and only if $\calS_r(X)$ is rational. We refer to \cite{BalBerCh:DimensionContactLoci} and the references therein for a complete explanation of the notion of identifiability and related topics.

\begin{proposition}\label{prop: map to abelian implies not rational}
 Let $X$ be generically $r$-identifiable and suppose there exists a non-constant map $\alpha : X \to A$ where $A$ is an abelian variety. Then, $\sigma_r(X)$ is not rational.
\end{proposition}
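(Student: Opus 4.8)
The plan is to reduce the question to the rationality of the abstract secant variety $\calS_r(X)$ and then to exhibit a non-constant morphism from $\calS_r(X)$ to the abelian variety $A$, which is incompatible with rationality. Since $X$ is generically $r$-identifiable, the projection $\calS_r(X) \to \sigma_r(X)$ is birational, so by the observation recorded just before the statement it suffices to prove that $\calS_r(X)$ is \emph{not} rational.

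To this end I would build a morphism $\gamma : \calS_r(X) \to A$ as a composition. The map $\alpha$ induces a morphism $\Sigma^r \alpha : \Sigma^r X \to \Sigma^r A$ on symmetric products. Because $A$ carries a commutative group law, the assignment $(a_1 \vvirg a_r) \mapsto a_1 + \cdots + a_r$ is symmetric, hence $\frakS_r$-invariant, and so descends from $A^{\times r}$ to a summation morphism $s : \Sigma^r A \to A$. Composing with the first projection $\pi : \calS_r(X) \to \Sigma^r X$ (the restriction to $\calS_r(X) \subseteq \Sigma^r X \times \bbP^N$ of the projection onto the first factor) yields
\[
 \gamma : \calS_r(X) \xrightarrow{\pi} \Sigma^r X \xrightarrow{\Sigma^r \alpha} \Sigma^r A \xrightarrow{s} A .
\]

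Next I would verify that $\gamma$ is non-constant. As $\alpha$ is non-constant, there exist $x, x' \in X$ with $\alpha(x) \neq \alpha(x')$; fixing arbitrary $x_2 \vvirg x_r$, the images of $(x, x_2 \vvirg x_r)$ and $(x', x_2 \vvirg x_r)$ under $s \circ \Sigma^r\alpha$ differ by $\alpha(x) - \alpha(x') \neq 0$, so $s \circ \Sigma^r \alpha : \Sigma^r X \to A$ is non-constant. Since $\pi$ is dominant—its generic fibre over $\Sigma^r X$ is the linear span $\langle x_1 \vvirg x_r \rangle$, which is nonempty—the composition $\gamma$ is non-constant as well.

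Finally I would invoke the rigidity of abelian varieties: a rational variety admits no non-constant map to an abelian variety. Concretely, if $\calS_r(X)$ were rational, I would pass to a smooth projective model, which is again rational; its Albanese variety vanishes (the Albanese is a birational invariant of smooth projective varieties and $\Alb(\bbP^N) = 0$), and every morphism to $A$ factors through the Albanese, forcing the induced map to be constant and contradicting the non-constancy of $\gamma$. Hence $\calS_r(X)$ is not rational, and therefore neither is $\sigma_r(X)$. I expect this last paragraph to be the only delicate point: one must pass to a smooth model, use that a rational map from a smooth variety to an abelian variety extends to a morphism, and appeal to the Albanese factorization. The construction of $\gamma$ and the verification of its non-constancy are entirely formal.
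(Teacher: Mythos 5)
Your proposal is correct and takes essentially the same route as the paper: reduce to the abstract secant variety $\calS_r(X)$ via generic identifiability, construct the symmetric-sum morphism $\calS_r(X) \to \Sigma^r X \to A$ (your $s \circ \Sigma^r\alpha \circ \pi$ is precisely the paper's $\alpha_\Sigma$ composed with the projection), and conclude from the fact that a rational variety admits no non-constant morphism to an abelian variety. The only cosmetic difference is that the paper cites this rigidity fact directly (Milne, \emph{Abelian Varieties}, Proposition 3.9), whereas you sketch its proof via the Albanese variety; both are valid.
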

\begin{proof}
 It is a known fact that if a variety $Y$ is rational and $A$ is an abelian variety, then every map $\beta : Y \to A$ is constant, see, e.g., \cite[Proposition 3.9]{Milne:AbelianVarieties}. Define $\alpha_\Sigma : \Sigma^r X \to A$ by $\alpha_{\Sigma} ( x_1 \vvirg x_r) = \alpha(x_1) + \cdots + \alpha(x_r)$, where $+$ denotes the (abelian) operation of $A$ as a group; since $+$ is commutative, $\alpha_\Sigma$ is well-defined. Notice that since $\alpha$ is not constant, then $\alpha_{\Sigma}$ is non-constant.
 
In particular, the map $\calS_r(X) \to \Sigma^r X \to A$ given by the composition of the projection of $\calS_r(X)$ onto $\Sigma^r X$ followed by $\alpha_\Sigma$ is non-constant as well. This shows that $\calS_r(X)$ is not rational, and therefore, since by generic identifiability $\calS_r(X)$ is birational to $\sigma_r(X)$, we conclude that $\sigma_r(X)$ is not rational.
\end{proof}

We mention that, given a smooth projective variety $X$, there is a general construction to define an abelian variety $\Alb(X)$, called {\it Albanese variety}, together with a map $\alpha : X \to \Alb(X)$ satisfying a universal property, see \cite[Theorem V.13]{beauville}. We do not provide any detail here, but we point out that one of the properties of this construction is that $\alpha(X)$ generates $\Alb(X)$ as an abelian group and that $\dim \Alb(X) = h^1(\calO_X)$. In particular, if $h^1(\calO_X) > 0$, then $\alpha: X \to \Alb(X)$ is non-constant. Hence, Proposition \ref{prop: map to abelian implies not rational} provides immediately the following.
\begin{corollary}\label{corol: positive h1 implies not rational}
 Let $X$ be a smooth projective variety, generically $r$-identifiable and such that $h^1(\calO_X) > 0$. Then, $\sigma_r(X)$ is not rational. Moreover, if $\sigma_r(X)$ is a hypersurface, then a generic line through $X$ is a $r$-multidrop line.
\end{corollary}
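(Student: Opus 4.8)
The statement to establish is Corollary~\ref{corol: positive h1 implies not rational}, and the plan is to deduce both assertions from the machinery already in place, with essentially all of the genuine work concentrated in a genericity analysis for the second part.

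For the first assertion I would invoke the Albanese construction recalled just above the statement. Since $X$ is smooth and projective, there is a map $\alpha \colon X \to \Alb(X)$ to an abelian variety with $\dim \Alb(X) = h^1(\calO_X)$ whose image generates $\Alb(X)$. The hypothesis $h^1(\calO_X) > 0$ forces $\dim \Alb(X) \geq 1$; were $\alpha$ constant, its image would generate the trivial subgroup and $\Alb(X)$ would be a point, a contradiction. Hence $\alpha$ is non-constant, and Proposition~\ref{prop: map to abelian implies not rational} applies verbatim, using generic $r$-identifiability of $X$, to conclude that $\sigma_r(X)$ is not rational. This part is essentially immediate.

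For the second assertion, assume in addition that $\sigma_r(X)$ is a hypersurface, and set $g(z) := \deg(\sigma_r(X)) - \mult_{\sigma_r(X)}(z)$ for $z \in X$. The plan is to show $g(z) \geq 2$ for generic $z \in X$. Since $\mult_{\sigma_r(X)}(z)$ is upper semicontinuous along $X$, the gap $g$ is lower semicontinuous, so it attains a maximal value $g_{\max}$ on a dense open $U \subseteq X$. The value $g_{\max}$ cannot be $0$: otherwise every $z \in U$ would be a vertex of the cone $\sigma_r(X)$ by the observation preceding the Rationality subsection, and since the vertex locus is linear and closed it would contain $\overline{U} = X$, hence the nondegenerate span $\bbP^N$, contradicting that $\sigma_r(X)$ is a hypersurface. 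The value $g_{\max}$ cannot be $1$ either: a point of gap exactly $1$ would make $\sigma_r(X)$ rational by Proposition~\ref{prop: gap one implies birational}, contradicting the first assertion. Therefore $g_{\max} \geq 2$.

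Finally, fix a generic $z \in U$ and a generic line $\bbP L$ through $z$. By B\'ezout, $\bbP L \cap \sigma_r(X)$ is a scheme of length $\deg(\sigma_r(X))$; the point $z$ contributes multiplicity $\mult_{\sigma_r(X)}(z) = \deg(\sigma_r(X)) - g_{\max}$, leaving residual length $g_{\max} \geq 2$, and by Bertini applied to the generic line these residual points are reduced, distinct, and lie in the smooth locus of $\sigma_r(X)$ away from $z$. Choosing two of them as $q_0, q_1$, and noting that a generic line is not contained in the hypersurface so that $\bbP L \not\subseteq \sigma_r(X)$, all three conditions in the definition of an $r$-multidrop line are met. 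The step requiring the most care is the genericity bookkeeping of this last paragraph: one must arrange simultaneously that $z$ lies in the open locus where $g(z) = g_{\max}$, that Bertini yields transversal (hence reduced, distinct, smooth) residual intersection points, and that the line meets $X$ to the prescribed order at $z$. In other words, the main subtlety is interpreting \textquotedblleft generic line through $X$\textquotedblright\ as a line through a \emph{generic} point of $X$ in a \emph{generic} direction, so that all the semicontinuous and Bertini-type conditions hold at once.
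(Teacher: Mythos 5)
Your proof is correct and follows essentially the same route as the paper: the first assertion is exactly the Albanese argument feeding into Proposition~\ref{prop: map to abelian implies not rational}, and the ``moreover'' part is the paper's own (implicit) gap analysis --- gap $0$ excluded by the cone/nondegeneracy observation, gap $1$ excluded by Proposition~\ref{prop: gap one implies birational}, then B\'ezout--Bertini on a generic line through a generic point of $X$. The paper states the corollary as immediate from the preceding discussion, while you additionally spell out the semicontinuity bookkeeping; that is a welcome level of detail but not a different approach.
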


We conclude this section providing a class of varieties to which Corollary \ref{corol: positive h1 implies not rational} can be applied.

\begin{example}\label{example: smooth nonrational curve}
Let $X \subset \mathbb P^{2k}$ be a smooth curve of genus $g>0$. By Palatini's Lemma (see, e.g., \cite[Corollary 1.2.3]{Russo:TangentsSecants}), its $k$-th secant variety is a hypersurface. Generic identifiability always holds for curves, see \cite[Corollary 2.7]{cc2}. The condition $h^1(\calO_X) > 0$ follows by Serre's duality, since $h^1(\calO_X) = h^0(\omega_X) = g$, where $\omega_X$ is the canonical bundle on $X$. This shows that $X$ satisfies the hypotheses of Corollary \ref{corol: positive h1 implies not rational} and a generic line through $X$ is a $r$-multidrop line.
\end{example}

Example \ref{example: smooth nonrational curve} generalizes the example of \cite[Section 5.2]{ChrGesJen:BorderRankNonMult} where it was shown that a generic line through the elliptic normal quintic $X \subseteq \bbP^4$ is a $2$-multidrop. In that case, the result was shown directly using the equation of the hypersurface $\sigma_2(X)$ which provides $\deg(\sigma_2(X)) - \mult_{\sigma_2(X)}(z) = 5 -3 = 2$. In fact, the result of the next subsection give a full generalization of Example \ref{example: smooth nonrational curve}.

\subsection{The case of curves}

We completely characterize the existence of multidrop lines for curves admitting a secant variety that is a hypersurface. Briefly, if $X \subseteq \bbP^{2k}$ is a non-degenerate curve, then $X$ always admits $k$-multidrop lines, except if $X$ is the rational normal curve in $\bbP^{2k}$. 

\begin{theorem}\label{thm: curves}
 Let $X \subseteq \bbP^{2k}$ be a non-degenerate curve and let $z \in X$ be a generic point. Then $\sigma_k(X)$ is a hypersurface. Moreover, $\deg(\sigma_k(X)) - \mult_{\sigma_k(X)}(z) =1$ if and only if $X$ is a rational normal curve of degree $2k$.
\end{theorem}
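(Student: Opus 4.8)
The plan is to reduce the numerical quantity $\deg(\sigma_k(X)) - \mult_{\sigma_k(X)}(z)$ to a secant-plane count on a projected curve, to settle the two implications separately, and to invoke the classification of curves of minimal degree. First, $\sigma_k(X)$ is a hypersurface by Palatini's Lemma (as already used in Example~\ref{example: smooth nonrational curve}): curves are never defective, so $\dim \sigma_k(X) = \min(2k,2k-1) = 2k-1$. For general $z\in X$ the integer $\mult_{\sigma_k(X)}(z)$ is constant, and by Bezout together with Bertini (a general line through $z$ meets $\sigma_k(X)$ transversally away from $z$) the difference $\deg(\sigma_k(X)) - \mult_{\sigma_k(X)}(z)$ is exactly the number of points other than $z$ in which a general line through $z$ meets $\sigma_k(X)$; equivalently, it is the degree of the projection $\pi_z\colon \sigma_k(X)\dashto \bbP^{2k-1}$ from $z$. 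I would then read off this degree geometrically. Writing $X' = \pi_z(X)\subseteq\bbP^{2k-1}$, a non-degenerate curve of degree $\deg X - 1$ with $\pi_z|_X$ birational onto $X'$ for general $z$ (when $k\ge 2$; the case $k=1$ is immediate, giving $\deg X - 1$, which equals $1$ iff $X$ is a conic), and using that curves are generically $k$-identifiable, a general fibre of $\pi_z|_{\sigma_k(X)}$ is in bijection with the set of $k$-secant $(k-1)$-planes of $X'$ through a general point of $\bbP^{2k-1}$: a point $p$ in the fibre lies on a unique $k$-secant plane $\Lambda$ of $X$, whose image $\pi_z(\Lambda)$ is a $k$-secant plane of $X'$ through $\pi_z(p)$, and conversely every such plane lifts uniquely. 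Hence $\deg(\sigma_k(X)) - \mult_{\sigma_k(X)}(z)$ equals the number $N_k(X')$ of $k$-secant $(k-1)$-planes of $X'$ through a general point.

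For the direction $X$ rational normal $\Rightarrow$ difference $=1$: when $X = \nu_{2k}(\bbP^1)$, the variety $\sigma_k(X)$ is the determinantal hypersurface cut out by the vanishing of the $(k+1)\times(k+1)$ middle catalecticant (Hankel) determinant, so $\deg\sigma_k(X) = k+1$. A point $z\in X$ has catalecticant of rank one, i.e.\ corank $k$, and the determinant hypersurface vanishes to order equal to the corank at such a point, so $\mult_{\sigma_k(X)}(z) = k$ and the difference is $1$. Equivalently, $X'$ is the rational normal curve of degree $2k-1$ in $\bbP^{2k-1}$, and by Sylvester a general binary form of odd degree $2k-1$ is a sum of $k$ powers in a unique way, so $N_k(X')=1$.

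Conversely, suppose the difference equals $1$. Then $\pi_z|_{\sigma_k(X)}$ is birational, so $\sigma_k(X)$ is rational by Proposition~\ref{prop: gap one implies birational}. By the Albanese obstruction of Proposition~\ref{prop: map to abelian implies not rational} and Corollary~\ref{corol: positive h1 implies not rational} (curves being generically $k$-identifiable), a smooth curve of positive genus has non-rational $\sigma_k(X)$; hence $X$ is rational. It then remains to show that a rational curve with difference $1$ has minimal degree $2k$. Here I would use the count of Paragraph~1: for a rational curve $X'\subseteq\bbP^{2k-1}$ of degree $e=\deg X - 1$, the number $N_k(X')$ is given by the classical formula for apparent $k$-fold points (generalising the apparent-double-point count $\binom{e-1}{2}$ for curves in $\bbP^3$), which is strictly increasing in $e$ for $e\ge 2k-1$ and equals $1$ exactly when $e=2k-1$. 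Thus $N_k(X')=1$ forces $\deg X = 2k$, and a non-degenerate curve of degree $2k$ in $\bbP^{2k}$ has minimal degree, hence is the rational normal curve by the del~Pezzo--Bertini classification.

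I expect the main obstacle to be this last enumerative step: establishing the strict monotonicity of $N_k$ in the degree (equivalently, the lower bound $N_k(X')\ge 1$ with equality characterising minimal degree), together with the precise dictionary between the fibres of $\pi_z|_{\sigma_k(X)}$ and secant planes of $X'$. The rationality obstruction only excludes positive genus, so the genuinely remaining case is rational non-normal curves, where a degree computation rather than a rationality argument is unavoidable. Additional care will be needed to justify the genericity of $z$ (constancy of the multiplicity, transversality, identifiability at the finitely many fibre points) and to extend the conclusion to singular curves, presumably by passing to the normalisation when applying the Albanese argument.
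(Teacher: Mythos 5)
Your reduction is the same as the paper's: Palatini's Lemma for the hypersurface statement, projection $\pi_z$ from a generic $z\in X$, the identification of $\deg(\sigma_k(X))-\mult_{\sigma_k(X)}(z)$ with the number of residual intersection points of a general line through $z$, and hence with the number of $k$-secant $(k-1)$-planes of the projected curve $X'\subseteq\bbP^{2k-1}$ through a general point (the paper proves the inequality $\geq$, via the Trisecant Lemma and generic $k$-identifiability, which is all either argument needs). Your catalecticant computation for the rational normal curve is also correct, and in fact cleaner than the paper's: the multiplicity at a point of $X$ is the corank $k$, giving difference exactly $1$ (the paper states $k-1$, apparently a typo). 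Where you genuinely diverge is the hard direction: the paper concludes by citing Catalano-Johnson \cite{CatJ:HomIdHigherSecVar}, namely that the only non-degenerate curve in $\bbP^{2k-1}$ admitting a \emph{unique} $k$-secant $(k-1)$-plane through a general point is the rational normal curve of degree $2k-1$; you propose to re-derive this classification from the Albanese obstruction plus an enumerative count for rational curves. That substitution is where the proposal breaks down.

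Two concrete problems. First, the Albanese argument (Propositions \ref{prop: gap one implies birational}, \ref{prop: map to abelian implies not rational} and Corollary \ref{corol: positive h1 implies not rational}) requires a non-constant morphism $X\to A$; the theorem has no smoothness hypothesis, and for a singular curve whose normalization has positive genus such a map may not exist. For example, glue two general points $p,q$ of an elliptic curve $E$: any morphism from the resulting nodal curve to an abelian variety lifts to a morphism $E\to A$, which up to translation is a homomorphism from $\mathrm{Jac}(E)$ and must kill the non-torsion class of $p-q$, hence is constant. This is repairable (work with rational maps, pull the secant bundle back to $\Sigma^k$ of the normalization), but it is not the routine step you suggest. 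Second, and more seriously, the enumerative step is wrong as stated: for singular rational curves the number of $k$-secant $(k-1)$-planes through a general point is \emph{not} a function of the degree alone. Already for $k=2$, a rational space quartic with one node has $\binom{3}{2}-\delta = 2$ secant lines through a general point, not $3$: each singularity subtracts its delta invariant from the smooth-curve count. So "strictly increasing in $e$" cannot be read off a degree-only formula; to obtain the bound $N_k(X')\geq 2$ for \emph{every} non-degenerate rational curve of degree $>2k-1$, however singular, one must control the singularity corrections (e.g. by Castelnuovo-type bounds) — and that is precisely the content of the Catalano-Johnson theorem the paper invokes. As written, the crux of the "only if" direction is assumed rather than proved.
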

\begin{proof}
 By Palatini's Lemma (see e.g. \cite[Proposition 1.2.3]{Russo:TangentsSecants}), $\dim \sigma_k(X) = 2k-1$, so it is a hypersurface. Let $z \in X$ be a generic point and let $\mu = \mult_{\sigma_k(X)}(z)$. Let $\pi_z : \bbP^{2k} \setminus \{ z \} \to \bbP^{2k-1}$ be the projection from $z$ and let $X' = \bar{\pi_z(X)} \subseteq \bbP^{2k-1}$. Since $z$ is generic, the projection $\pi_z$ is a birational map between $X$ and $X'$; in particular $\pi_z$ is generically one-to-one on $X$. Moreover, $X'$ is a non-degenerate curve and $\sigma_k(X') = \bbP^{2k-1}$.
 
 Let $\bbP L$ be a generic line through $z$. Let $q' = \pi_z(\bbP L)$, which is a generic point of $\bbP^{2k-1}$. Let $u: \calS_k(X') \to \bbP^{2k-1}$ be the projection from the abstract secant variety of $X'$ to $\bbP^{2k-1}$. Since $\dim \calS_k(X') = 2k-1$, we deduce that $u$ is surjective and generically finite.
 
 Since $q'$ is generic in $\bbP^{2k-1}$, $\deg(u)$ can be computed as the number of preimages of $q'$ via $u$. Every element of $u^{-1}(q')$ is a pair $(S',q') \in \Sigma^kX' \times \bbP^{2k-1}$ where  $S' \subseteq X'$ with $|S'| = k$ and $q' \in \langle S' \rangle$. Now, from each element $(S',q') \in u^{-1}(q')$, we construct different points $q_{S'} \in \sigma_r(X) \cap \bbP L$, such that all these points are also distinct from $z$. In particular, we conclude $\deg(\sigma_k(X)) - \mu \geq \deg(u)$.
 
 Since $\pi_z$ is generically one-to-one on $X$ and $q'$ is generic in $\bbP^{2k-1}$, for every $(S',q') \in u^{-1}(q')$ and every $w' \in S'$, there is a unique $w \in X$ such that $\pi_z(w) = w'$. In particular, every $(S',q') \in  u^{-1}(q')$ defines a subset $S \subseteq X$ with $|S| = k$. Notice that for all $S$ constructed in this way, $z \notin \langle S \rangle$`: indeed, $\langle S \rangle$ is a generic $\bbP^{k-1}$ secant to $X$ and therefore by the Trisecant Lemma (see e.g. \cite[Proposition 1.4.3]{Russo:TangentsSecants}), $\langle S \rangle$ only intersects $X$ at the points of $S$. For every $S$, let $q_{S'}$ be the unique preimage of $q'$ in $\langle S \rangle$. Notice that $q_{S'}$ can only arise from a single $(S',q') \in u^{-1}(q')$. Indeed, $X$ is generically $r$-identifiable (see e.g. \cite[Corollary 2.7]{cc1}) and $q_{S'}$ is a generic point of $\sigma_k(X)$, since it arises as intersection of $\sigma_k(X)$ with a generic line through $z$. In particular, for each $S'$, the preimage $S$ is the only subset of $X$ with $|S|=k$ and $q_{S'} \in \langle S \rangle$. This shows $\deg(\sigma_k(X)) - \mu \geq \deg(u)$.
 
Therefore, we conclude that $\deg(\sigma_k(X)) - \mult_{\sigma_k(X)}(z) \geq 2$ whenever $\deg(u) \geq 2$. By \cite[Theorem 3.4]{CatJ:HomIdHigherSecVar}, the only curve $X' \subseteq \bbP^{2k-1}$ with $\deg(u) = 1$ is a rational normal curve of degree $2k-1$. If $X'$ is a rational normal curve in $\bbP^{2k-1}$ then $X$ is a rational normal curve in $\bbP^{2k}$, showing that $\deg(\sigma_k(X)) - \mult_{\sigma_k(X)}(z) \geq 2$ if $X$ is not the rational normal curve.

Conversely, if $X$ is the rational normal curve in $\bbP^{2k}$, then $\sigma_k(X)$ is the hypersurface of degree $k+1$ defined by the determinant of a square matrix of size $k+1$, see \cite[Proposition 9.7]{Harris:AlgGeo}. In this case, one can observe $\mult_{\sigma_r(X)}(z) = k-1$ and therefore $\deg(\sigma_k(X)) - \mult_{\sigma_k(X)}(z) \geq 1$.
\end{proof}

Theorem \ref{thm: curves} generalizes Example \ref{example: smooth nonrational curve} as well as the examples on curves of \cite[Section 5.2 and Section 5.3]{ChrGesJen:BorderRankNonMult}.

\subsection{Multidrop lines via projection}\label{subsec: secants via projections}
We saw that the existence of multidrop lines implies the existence of points for which border rank strict submultiplicativity occurs. The construction of the previous sections relies on the fact that one of the secant varieties is a hypersurface. In this section, we provide a more general construction to generate multidrop lines which does not depend on the dimension of the secant variety. 

Let $X \subseteq \bbP ^N$ and fix $r$ such that $\sigma_r(X) \neq \bbP ^N$. Let $z \in X$ and $q_0,q_1 \in \sigma_r(X)$ be generic points: in particular $z,q_0,q_1$ are not collinear and $\uR_X(q_0) = \rmR_X(q_0) = \uR_X(q_1) = \rmR_X(q_1) = r$.
A generic $w \in \langle q_0,q_1,z\rangle$ is such that $\uR_X(w) = \rmR_X(w) = \min \{ 2r+1 ,g_X\}$, where $g_X$ is the generic $X$-rank in 
$\bbP^N$. 

Let $\pi_w : \bbP^N \dashto \bbP^{N-1}$ be the projection from $w$. Since $w \notin \sigma_r(X)$, $\pi_w$ is regular on $\sigma_r(X)$ and by genericity $\uR_{\pi_w( X)} (\pi_w(q_0)) = \uR_{\pi_w(X)} ( \pi_w(q_1)) = r$ and similarly for the $\pi_w(X)$-rank.

\begin{proposition}\label{prop: projection generates multidrop}
 In the construction above, let $\bbP L = \langle \pi_w(z) ,\pi_w(q_0) \rangle$. If $\bbP L \not\subseteq \sigma_r(\pi_w(X))$, then $\bbP L$ is an $r$-multidrop line for $\pi_w(X)$.
\end{proposition}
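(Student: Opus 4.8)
The plan is to exhibit the three points required by the definition of an $r$-multidrop line directly as the images under $\pi_w$ of the three points $z, q_0, q_1$ used in the construction. The key geometric observation is that, since $w$ is a point of the plane $\Pi := \langle q_0, q_1, z\rangle \simeq \bbP^2$, the projection $\pi_w$ collapses $\Pi$ onto a single line: the linear projection of a $\bbP^2$ from a point lying on it has image a $\bbP^1$. Consequently the three images $\pi_w(z), \pi_w(q_0), \pi_w(q_1)$ are automatically collinear, all lying on the line $\pi_w(\Pi)$, which coincides with $\bbP L = \langle \pi_w(z), \pi_w(q_0)\rangle$.

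Granting this, I would set $z' = \pi_w(z)$, $q_0' = \pi_w(q_0)$ and $q_1' = \pi_w(q_1)$, and verify the defining conditions one by one. First, $z' \in \bbP L \cap \pi_w(X)$ is clear since $z \in X$. Next, $q_0', q_1' \in \bbP L \cap \sigma_r(\pi_w(X))$: membership in $\bbP L$ is the collinearity just established, while membership in $\sigma_r(\pi_w(X))$ is exactly the content of the identities $\uR_{\pi_w(X)}(\pi_w(q_0)) = \uR_{\pi_w(X)}(\pi_w(q_1)) = r$ recorded in the construction preceding the statement. Finally, $\bbP L \not\subseteq \sigma_r(\pi_w(X))$ is precisely the hypothesis of the proposition.

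The only point requiring a genericity argument is the distinctness of $z', q_0', q_1'$. For $a, b \in \bbP^N \setminus \{w\}$ one has $\pi_w(a) = \pi_w(b)$ precisely when $a, b, w$ are collinear; inside $\Pi$ this means $w$ lies on one of the three lines $\langle z, q_0\rangle$, $\langle z, q_1\rangle$, $\langle q_0, q_1\rangle$. Since $z, q_0, q_1$ are not collinear, these are three genuine lines of the $\bbP^2 = \Pi$, and a generic $w \in \Pi$ avoids all of them; hence the three projected points are pairwise distinct. With every condition of the definition verified, $\bbP L$ is an $r$-multidrop line for $\pi_w(X)$.

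I do not expect a serious obstacle: once the collapsing-of-the-plane observation is in hand the verification is routine, and the genericity of $w$ — already built into the construction — supplies the distinctness for free. The one place to be slightly careful is to confirm that $\pi_w$ is defined (regular) at $z, q_0, q_1$, i.e. that none of them equals $w$; this is immediate since $w$ is generic in $\Pi$ while $z, q_0, q_1$ are fixed vertices of $\Pi$.
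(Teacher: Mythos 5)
Your proof is correct and structurally parallel to the paper's (collinearity of the three projected points, membership of $\pi_w(q_0),\pi_w(q_1)$ in $\sigma_r(\pi_w(X))$, non-containment by hypothesis), but it handles the one substantive step differently. The crux is why $\pi_w(q_0),\pi_w(q_1)\in\sigma_r(\pi_w(X))$: you import this from the identities $\uR_{\pi_w(X)}(\pi_w(q_0))=\uR_{\pi_w(X)}(\pi_w(q_1))=r$ recorded in the construction, whereas the paper does not lean on that assertion (it is justified only ``by genericity'' in the setup) and instead proves the containment $\pi_w(\sigma_r(X))\subseteq\sigma_r(\pi_w(X))$ outright: since $\rmR_X(w)>r$, one has $w\notin\langle x_1,\dots,x_r\rangle$ for any $x_1,\dots,x_r\in X$, so $\pi_w\langle x_1,\dots,x_r\rangle=\langle \pi_w(x_1),\dots,\pi_w(x_r)\rangle$, and hence every point of $X$-rank at most $r$ projects to a point of $\pi_w(X)$-rank at most $r$. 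Your reading is defensible because the proposition explicitly invokes ``the construction above,'' but note that the only part of the recorded identity you actually need --- the inequality $\uR_{\pi_w(X)}(\pi_w(q_i))\le r$ --- is precisely what the paper's one-line argument establishes, and it requires no genericity at all, only $\rmR_X(w)>r$; swapping your citation for that argument makes the proof self-contained and is the safer choice, since the ``by genericity'' claim in the setup is itself unproved. Conversely, your write-up buys something the paper's omits: an explicit check that $\pi_w(z),\pi_w(q_0),\pi_w(q_1)$ are pairwise distinct (a generic $w$ in the plane $\langle z,q_0,q_1\rangle$ avoids the three lines spanned by pairs of the non-collinear points $z,q_0,q_1$), which the definition of an $r$-multidrop line requires and which the paper's proof leaves implicit.
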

\begin{proof}
First observe that $\pi_w(q_0),\pi_w(q_1),\pi_w(z)$ are collinear, so that $\pi_w(q_1) \in \bbP L$ as well. Notice that $\pi_w(\sigma_r(X)) \subseteq \sigma_r(\pi_w(X))$ because for every $x_1 \vvirg x_r \in X$, we have $w \notin \langle x_1 \vvirg x_r\rangle$, as $\rmR_X(w) > r$; in particular $\pi_w \langle x_1 \vvirg x_r \rangle = \langle \pi_w(x_1) \vvirg \pi_w(x_r) \rangle$. This shows that $\pi_w(q_0),\pi_w(q_1) \in \sigma_r(\pi_w(X))$.

By assumption $\bbP L \not\subseteq \sigma_r(\pi_w(X))$, hence $\bbP L$ is a $r$-multidrop line for $\sigma_r(\pi_w(X))$.
\end{proof}

Proposition \ref{prop: projection generates multidrop} generalizes the example of the rational quartic in $\bbP^3$ described in Section \ref{subsec: rational quartic}. Indeed, let $X$ be a smooth rational quartic in $\bbP^3$, then $X$ is realized as the projection of a rational normal quartic $Y \subseteq \bbP^4$ from a point $w \in \bbP^4$. Let $\pi_w : Y \to X$ be this projection.

If $\uR_Y(w) \leq 2$ then it is easy to verify that $\pi_w(X)$ is not smooth. Therefore, $\rmR_Y(w) = \uR_Y(w) = 3$. Then, there are infinitely many planes $\langle y_1,y_2,y_3\rangle \subseteq \bbP^4$ with $y_1,y_2,y_3 \in Y$ such that $w \in \langle y_1,y_2,y_3\rangle$. The images of $\pi_w(y_1),\pi_w(y_2),\pi_w(y_3)$ are collinear and the line $\bbP L = \langle \pi_w(y_1),\pi_w(y_2),\pi(y_3) \rangle$ is a trisecant line to $X = \pi_w(Y)$.

\section{Submultiplicativity for homogeneous polynomials} \label{section: homogeneous poly}
This section studies submultiplicativity properties of rank of homogeneous polynomials with respect to Veronese varieties. Let $V$ be a vector space of dimension $n+1$ and let $\Sym^dV$ be the space of degree $d$ homogeneous polynomials in $n+1$ variables. The \textit{Veronese variety} $X_d$ is defined as the image of the Veronese embedding 
\[
\nu_d : \bbP V \longrightarrow \bbP \Sym^d V, \qquad [\ell] \mapsto [\ell^d].
\]
If $V$ is $2$-dimensional, then $X_d$ is the rational normal curve $\calC_d \subset \bbP^d$.

For homogeneous polynomials $f \in \Sym^d V$, the $X_d$-rank of the point $[f] \in \bbP \Sym^dV$ is classically referred to as \emph{Waring rank}, and it coincides with the minimum number of $d$-th powers of linear forms needed to express $f$ as their linear combination. In the following, write $f$ both for the element in $\Sym^d V$ and for its projective class $[f]$. 

The first example of rank strict submultiplicativity given in \cite{ChrJenZui:NonMultTensorRank} was the monomial $xy^2$, for which $\rank_{\calC_3} (xy^2) = 3$, but $\rank_{\calC_3^{\times 2}}(xy^2 \otimes xy^2) \leq 8 < 9$. In fact, $\rank_{\calC_3^{\times 2}}(xy^2 \otimes xy^2) = 8$, see \cite{ChFri:TwoThreeQbitRank8}.  In \cite{BalBerChrGes:PartiallySymRkW}, strict submultiplicativity was observed for all monomials of the form $xy^d$, for any $d \geq 3$. In this case, one has $\rmR_{\calC_{d+1}}(xy^d) = d+1$ (see, e.g., \cite{CCG}), but $\rank_{\calC_{d+1}^{\times 2}}(xy^d \otimes xy^d) \leq 4d +1 < (d+1)^2$, see \cite[Proposition 3.5]{BalBerChrGes:PartiallySymRkW}. Note that the form $xy^d$ lies on a tangent line to $\calC_{d+1}$, so $\uR_{\calC_d}(xy^{d}) = 2$ and, therefore, it has border rank smaller than the rank. In particular, these are instances for which Conjecture \ref{conj} holds.

In this section, we study the cases of binary forms and ternary cubics, proving that Conjecture \ref{conj} holds in these cases. In fact, in these cases, having border rank strictly smaller than the rank is equivalent to have strict submultiplicativity for the rank of the second tensor power. However, we believe this to be a low dimensional phenomenon, due to the fact that in these cases the border rank lower bounds are attained by flattening methods, as explained below. We expect that as soon as there are cases of strict submultiplicativity of border rank, then strict submultiplicativity of rank will occur as well, even in examples where rank and border rank coincide. This is what happens in the tensor setting in the construction of \cite{ChrGesJen:BorderRankNonMult} via the multidrop lines described in Section \ref{sec:geometric_cond}.

We introduce briefly some basics about flattening methods. A \textit{flattening} of $\Sym^d V$ is a linear map $\Flat : \Sym^d V \to \Hom(E,F)$ where $E,F$ are vector spaces. One can exploit semicontinuity of matrix rank to obtain lower bounds on $\uR(f)$ from the rank of $\Flat(f)$ as a map from $E$ to $F$. More precisely, from \cite[Proposition 4.1.1]{LanOtt:EqnsSecantVarsVeroneseandOthers}, we have
\begin{equation}\label{eqn: flattening lower bound}
 \uR(f) \geq \frac{\rk (\Flat(f))}{\max\{ \rk( \Flat(\ell^d)) : \ell \in V\}}.
\end{equation}
Since Waring rank and Waring border rank are invariant under the action of the general linear group $GL(V)$ acting on the variables, usually one chooses $\Flat$ to be a $GL(V)$-equivariant map. In \cite{ChrJenZui:NonMultTensorRank}, it was shown that flattening lower bounds are \textit{multiplicative} with respect to tensor products. This directly follows from: (a) the fact that one can combine two flattening maps by taking the Kronecker product of the images and (b) the rank of a linear map is multiplicative under Kronecker product. In \cite[Section 7.2]{BalBerChrGes:PartiallySymRkW}, generalizations to other varieties are given.

\subsection{Binary forms} We recall basic facts of apolarity theory. We refer to \cite{IaKa:book, Ger:notes} for~details.

Let $V$ be a vector space of dimension $2$ and let $V^*$ be its dual. Write $\calC_d = \nu_d(\bbP V)$ for the rational normal curve in $\bbP \Sym^d V$. Let $\{x,y\}$ be a basis for $V$. Then, we identify the symmetric algebra $\Sym^\bullet V$ with the polynomial algebra $\bbC[x,y]$ and $\Sym^\bullet V^*$ with the algebra of differential operators with constant coefficients $\bbC[\partial_x,\partial_y]$, where $\partial_x = \frac{\partial}{\partial x}$ and $\partial_y = \frac{\partial}{\partial y}$. Then, $\Sym^\bullet V^*$ acts on $\Sym^\bullet V$ by contraction with the differential operators:
\[
	\circ : \Sym^\bullet V^* \times \Sym^\bullet V \longrightarrow \Sym^\bullet V, \qquad (\varphi,f) \mapsto \varphi \circ f := \varphi(f).
\]
The \textit{apolar ideal} of a homogeneous polynomial $f \in \Sym^d V$, denoted $f^\perp \subset \Sym^\bullet V^*$, is the ideal of differential operators which annihilate $f$, i.e., $f^\perp = \{\varphi \in \Sym^\bullet V^* ~|~ \varphi \circ f = 0\}$. It is a classical result that if $f$ is a binary form, then 
\[
f^\perp = (\varphi_1, \varphi_2), \qquad \text{ where } \deg(\varphi_1)+\deg(\varphi_2) = \deg(f) + 2. 
\]
If $\deg(\varphi_1) \leq \deg(\varphi_2)$ then $\uR_{\calC_d}(f) = \deg(\phi_1)$. Moreover, if $\varphi_1$ is square-free (as a homogeneous polynomial in $\partial_x,\partial_y$), then $\rank_{\calC_d}(f) = \deg(\varphi_1)$, otherwise, $\rank_{\calC_d}(f) = \deg(\varphi_2)$. As a consequence, the rank of a generic binary form of degree $d$ is $\left\lceil \frac{d+1}{2} \right\rceil$. These results essentially date back to Sylvester \cite{Syl}; for more recent references, see \cite{IaKa:book, CoSe:Binary, BGI}. 

The following result proves a stronger version of Conjecture \ref{conj} in the case of binary forms.

  \begin{theorem}\label{thm:submult_binary}
Let $f \in \Sym^d V$, $\dim V = 2$. Then, $$\brank_{\calC_d}(f) < \rank_{\calC_d}(f) \qquad \text{ if and only if } \qquad \rank_{\calC_d^{\times 2}}(f^{\otimes 2})<\rank_{\calC_d}(f)^2.$$
  \end{theorem}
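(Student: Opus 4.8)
The plan is to translate the whole statement into the apolarity dictionary recalled just above the theorem, and to reduce the biconditional to a single condition on the minimal generator of $f^\perp$. Write $f^\perp=(\varphi_1,\varphi_2)$ with $a:=\deg\varphi_1\le\deg\varphi_2=:b$ and $a+b=d+2$, so that $\brank_{\calC_d}(f)=a$, while $\rank_{\calC_d}(f)=a$ if $\varphi_1$ is square-free and $\rank_{\calC_d}(f)=b$ otherwise. The first step is the elementary observation that $\brank_{\calC_d}(f)<\rank_{\calC_d}(f)$ holds \emph{if and only if} $a<b$ and $\varphi_1$ is not square-free. Indeed $\rank_{\calC_d}(f)\le b$ always, so if $a=b$ then $a=\brank_{\calC_d}(f)\le\rank_{\calC_d}(f)\le b=a$ forces equality; hence a strict gap forces $a<b$, and then the square-free dichotomy shows the gap is exactly the presence of a repeated root in $\varphi_1$. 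This reduces the theorem to the two implications below, proved with this explicit description of the hypothesis.

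For the implication ``$\brank_{\calC_d}(f)=\rank_{\calC_d}(f)\Rightarrow$ multiplicativity'' (the contrapositive of one direction), the plan is to use catalecticant flattenings $\mathrm{Cat}_i(f)\colon \Sym^iV^*\to\Sym^{d-i}V$, $\psi\mapsto\psi\circ f$, whose maximal rank over $i$ equals $\brank_{\calC_d}(f)=a$. Choosing $i$ attaining this maximum gives a $GL(V)$-equivariant flattening with $\rk(\mathrm{Cat}_i(f))=a$ and $\rk(\mathrm{Cat}_i(\ell^d))=1$ for every $\ell$, so \eqref{eqn: flattening lower bound} yields $\uR(f)\ge a$ and is sharp when $\brank_{\calC_d}(f)=\rank_{\calC_d}(f)=a$. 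Since flattening lower bounds are multiplicative under tensor product \cite{ChrJenZui:NonMultTensorRank} — one tensors the two catalecticants and uses that matrix rank is multiplicative under Kronecker product — the Kronecker square of $\mathrm{Cat}_i$ gives $\rank_{\calC_d^{\times 2}}(f^{\otimes2})\ge\uR_{\calC_d^{\times2}}(f^{\otimes2})\ge a^2$. Together with the trivial bound $\rank_{\calC_d^{\times2}}(f^{\otimes2})\le\rank_{\calC_d}(f)^2=a^2$ from \eqref{trivial upper bounds}, this forces equality, so strict submultiplicativity implies $\brank_{\calC_d}(f)<\rank_{\calC_d}(f)$.

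For the converse, assume $a<b=\rank_{\calC_d}(f)$ with $\varphi_1$ not square-free; I must exhibit a decomposition of $f^{\otimes2}$ into fewer than $b^2$ elements of $\calC_d\times\calC_d$. The tool is partially symmetric apolarity \cite{BalBerChrGes:PartiallySymRkW}: $\rank_{\calC_d^{\times2}}(f^{\otimes2})$ is the least number of reduced points $([\ell_i],[m_i])\in\bbP V\times\bbP V$ with $f^{\otimes2}\in\langle \ell_i^d\otimes m_i^d\rangle$, and such a configuration $T$ works precisely when the bidegree-$(d,d)$ part of its ideal lies in $(f^{\otimes2})^\perp$. A rank-$b$ decomposition $f=\sum_i c_i\ell_i^d$ produces the grid $T=R\times R$ with $|R|=b$; since the $b^2$ tensors $\ell_i^d\otimes\ell_j^d$ are linearly independent, no grid term may simply be dropped, so the saving must come from a genuinely non-grid configuration. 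The key leverage is that, because $\deg\varphi_1=a<b$ and $\varphi_1$ has a repeated root, the bi-apolar ideal $(f^{\otimes2})^\perp$ is strictly larger than the ideal of a grid: it contains $\varphi_1\otimes\Sym^\bullet V^*$ and $\Sym^\bullet V^*\otimes\varphi_1$, encoding the osculating (fat-point) structure of $f$ in each factor. Exploiting this, I would reduce $f$ to the generalized additive normal form dictated by the factorization of $\varphi_1$ and resolve the osculating contributions of $f^{\otimes2}$ jointly, generalizing the explicit computation for the monomials $xy^d$ in \cite{BalBerChrGes:PartiallySymRkW} to produce a configuration with strictly fewer than $b^2$ points.

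The main obstacle is exactly this last construction: upgrading the border-rank drop from a statement about membership in $\sigma_a(\calC_d)$ to an explicit saving in the number of rank-one summands of the second tensor power, uniformly over all admissible factorization types of $\varphi_1$ (a single fat point, as for monomials, versus a mixture of repeated and simple roots). The flattening direction is routine; the delicate part is converting the enlarged bi-apolar ideal into an honest reduced, non-grid decomposition of size $<b^2$.
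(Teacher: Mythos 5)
Your first direction and your apolarity reformulation are fine: the argument that $\brank_{\calC_d}(f)=\rank_{\calC_d}(f)$ forces multiplicativity via catalecticant flattenings and multiplicativity of flattening lower bounds under Kronecker product is exactly the paper's argument. The genuine gap is the converse implication, which you do not prove: you only outline a plan via the bigraded apolar ideal of $f^{\otimes 2}$ and then concede that converting that ideal into a reduced, non-grid decomposition with fewer than $b^2$ points is an unresolved obstacle. That construction \emph{is} the substantive content of the theorem, so the proposal as written establishes only one half of the equivalence. Moreover, the route you sketch is genuinely delicate: multigraded apolarity merely translates the problem into exhibiting a reduced set of fewer than $b^2$ points of $\bbP^1\times\bbP^1$ whose biideal lies in $(f^{\otimes 2})^\perp$, and nothing in your outline produces such a set uniformly over the factorization types of $\varphi_1$. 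It is telling that the paper itself uses this bigraded ideal only to prove \emph{lower} bounds for products of monomials (bounds it notes are far from sharp), not to construct decompositions.

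The paper proves the hard direction without any bi-apolarity, and you could repair your write-up by adopting its route. First, $\brank_{\calC_d}(f)<\rank_{\calC_d}(f)$ together with $\deg\varphi_1+\deg\varphi_2=d+2$ forces $r:=\rank_{\calC_d}(f)\geq r_g+1$, where $r_g=\lceil (d+1)/2\rceil$ is the generic rank; one then inducts on $\rho=r-(r_g+1)$. For the base case $\rho=0$, the key point is that the locus $Y_{r_g}$ of binary forms of rank exactly $r_g$ is Zariski open (this uses the Comas--Seiguer description of rank strata \cite{CoSe:Binary}); hence, taking $\ell^d$ a summand of a minimal decomposition of $f$, both $q_0=f-2\eps\ell^d$ and $q_1=f-\eps\ell^d$ have rank $r_g=r-1$ for suitable $\eps$, and the substitution identity
\[
f^{\otimes 2}=q_0\otimes q_0+q_1\otimes\bigl(2\eps\ell^d\bigr)+\bigl(2\eps\ell^d\bigr)\otimes q_1
\]
of Christandl--Jensen--Zuiddam \cite{ChrJenZui:NonMultTensorRank} yields $\rank_{\calC_d^{\times 2}}(f^{\otimes 2})\leq (r-1)^2+2(r-1)=r^2-1$. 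For $\rho\geq 1$, one subtracts a single power from a minimal decomposition to get $g=f-\ell^d$ of rank $r-1\geq r_g+1$; since rank above generic automatically forces border rank strictly below rank, the inductive hypothesis applies to $g$, and expanding $f^{\otimes 2}=(g+\ell^d)^{\otimes 2}$ into four blocks gives $\rank_{\calC_d^{\times 2}}(f^{\otimes 2})\leq \bigl[(r-1)^2-1\bigr]+2(r-1)+1=r^2-1$. This openness-plus-substitution induction is the missing ingredient your proposal needs.
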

  \begin{proof}
First, we show that if $\brank_{\calC_d}(f) = \rank_{\calC_d}(f)$ then multiplicativity holds. This is a consequence of multiplicativity of flattening lower bounds. Indeed, for every $e \leq d$, define the following flattening map, called \textit{$e$-th catalecticant}: 
$$\begin{array}{c c c c}
	\cat_e : & \Sym^d V & \longrightarrow & \Hom( \Sym^e V^* , \Sym^{d-e}V) \\	& f & \mapsto & \cat_e(f) : \phi \mapsto \phi \circ f.
\end{array}
$$
Equivariantly, $\cat_e$ is an embedding of $\Sym^d V$ into $\Sym^e V \otimes \Sym^{d-e} V$. 

It is easy to see that if $f$ is a binary form with $\uR(f) = r$, then $\rk(\cat_{e}(f)) \leq r$ with equality for $r-1 \leq e \leq d-(r-1)$ (this is a classical result that goes back to Sylvester \cite{Syl}). In particular, the denominator in \eqref{eqn: flattening lower bound} is one and the border rank of binary forms matches a flattening lower bound.

By multiplicativity of flattening lower bounds, we deduce $\rmR_{\calC_d^{\times 2}}(f \otimes f) = \uR_{\calC_d^{\times 2}}(f \otimes f) = r^2$.

Now, suppose that $\uR_{\calC_d}(f) < \rmR_{\calC_d}(f)$. From the discussion above, $f^\perp = (\phi_1,\phi_2)$ with $\uR_{\calC_d}(f) = \deg(\phi_1) < \deg(\phi_2) = \rmR_{\calC_d}(f)$ and $\rmR_{\calC_d}(f) + \uR_{\calC_d}(f) = d+2$. In particular, if $r_g = \left\lceil \frac{d+1}{2} \right\rceil$ is the generic rank in $\bbP \Sym^d V$, we have $\rmR_{\calC_d}(f) \geq r_g+1$. Write $\rmR_{\calC_d}(f) = (r_g + 1) + \rho$ for some $\rho \geq 0$.

We use induction on $\rho$. 
 
{\it Case $\rho = 0$, i.e., $\rmR_{\calC_d}(f) = r = r_g+1$.} For any $s$, let $Y_s$ be the locus of forms having rank exactly~$s$. 

\begin{quote}
{\bf Claim.} {\it $Y_{r_g}$ is Zariski-open.} 
\begin{proof}[Proof of Claim]
We use \cite[Theorem 11]{CoSe:Binary}. Write $d = 2\delta$ or $d = 2\delta+1$ depending on the parity; then $r_g = \delta+1$. If $d = 2\delta$ is even, $Y_{r_g}$ is the complement of $\sigma_{\delta} (\calC_d)$, hence Zariski-open. If $d = 2\delta +1$ is odd, then $Y_{r_g}$ is the complement of the union $\sigma_{\delta}(\calC_d) \cup Y_{\delta+2}$. Write $Y_{r_g} = \bbP \Sym^d V \setminus (\sigma_{\delta}(\calC_d) \cup Y_{\delta+2}) = (\bbP \Sym^d V \setminus \sigma_{\delta} (\calC_d) ) \setminus Y_{\delta+2}$. Since $\sigma_{\delta}(\calC_d)$ is closed, $\bbP \Sym^d V \setminus \sigma_{\delta}(\calC_d)$ is open and it suffices to show that $Y_{\delta+2}$ is a closed subset of $\bbP \Sym^d V \setminus \sigma_{\delta}(\calC_d)$. This can be shown as follows. Let $h \in Y_{\delta+2}$, namely $\rmR_{\calC_d}(h) = \delta+2$ so that $\uR_{\calC_d}(h) = \delta+1$; the apolar ideal of $h$ is $h^\perp = (\psi_1,\psi_2)$ with $\deg(\psi_1) = \delta+1$ and $\psi_1$ not square-free. Therefore, $Y_{\delta+2}$ is determined by the vanishing of the discriminant of $\psi_1$; indeed, the generator of lowest degree in the apolar ideal of an element of $Y_{\delta+1}$ has degree $\delta+1$ and it is square-free; since the vanishing of the discriminant is a closed condition, we conclude. 
\end{proof}
\end{quote}
  
  Now, let $\ell$ be a linear form such that there exists a scalar $a$ with $\rmR_{\calC_d}(f-a \ell^d) = \rmR_{\calC_d} (f) -1$, namely $f-a \ell^d \in Y_{r_g}$. Since $Y_{r_g}$ is Zariski-open by the Claim, we deduce that $f- \eps \ell^d \in Y_{r_g}$ for a generic choice of $ \eps $. In particular, let $\eps$ be such that $q_0 = f- 2 \eps \ell^d$ and $q_1 = f- \eps \ell^d$ are elements of $Y_{r_g}$. We conclude using the same argument as in \cite{ChrJenZui:NonMultTensorRank}: we write
\begin{equation}\label{eq:CJZdrop}
   f \otimes f = (q_0 + 2 \eps \ell^d) \otimes (q_0 + 2 \eps \ell^d) = q_0 \otimes q_0 + q_1 \otimes \eps\ell^d + \eps\ell^d \otimes q_1
 \end{equation}
and we deduce $\rmR_{\calC_d^{\times 2}}(f \otimes f) \leq (r-1)^2 + 2 (r-1) = r^2 - 1 < \rmR_{\calC_d}(f)^2$.

{\it Case $\rho \geq 1$.} Let $\ell$ be a linear form such that $\rmR_{\calC_d}( f - \ell^d ) = \rmR( f ) -1$. Let $g = f - \ell^d$ and observe that $\rmR_{\calC_d}(g) = r -1 = (r_g + 1) + \rho -1 \geq r_g + 1$. In particular $\rmR_{\calC_d^{\times 2}}(g \otimes g ) \leq  \rmR_{\calC_d}(g)^2 -1 = (r-1)^2 -1 $, by the induction hypothesis. Hence, we write
\[
 f \otimes f = (g - \ell^d) \otimes (g - \ell^d) = g \otimes g - g \otimes \ell^d - \ell^d \otimes g + \ell^d \otimes \ell^d,
\]
and, passing to the rank, we obtain
\[
 \rmR_{\calC_d^{\times 2}} ( f \otimes f) \leq [ (r-1)^2 -1] + (r-1) + (r-1) + 1 = r^2 -1.\vspace{-0,6cm}
\]
\end{proof}

  \subsection{Plane cubics} 
 Also in the case of ternary cubics we prove a stronger version of Conjecture~\ref{conj}. Let $X_3$ be the Veronese surface given by the Veronese embedding of $\bbP V$ in $\bbP \Sym^3 V$, with $\dim V = 3$.
 
There is a complete classification of plane cubics, up to change of coordinates: in other words, the $GL(V)$-orbits in $\Sym^3 V$ are entirely classified, together with ranks and border ranks of their elements, see e.g. \cite{KoganMaza:ComputationCanonicalFormsTernaryCubics,LaTe10:RanksBorderRanks}. We record them as in \cite{CarCatOne:WaringLoci} for convenience:
\begin{enumerate}[(i)]
 \item\label{(0)class} $f = x^3 + y^3 + z^3 + a \cdot xyz$ with $a^3 \neq -3^3,0,6^3$: the generic cubic, with $\uR(f) = \rmR(f) = 4$;
 \item\label{(i)class} $f = x^3 + y^3 + z^3$: the Fermat cubic, with $\uR(f) = \rmR(f) = 3$;
 \item\label{(ii)class} $f = x^2(x-z) + y^2z$: the nodal cubic with $\uR(f) = \rmR(f) = 4$;
 \item\label{(iii)class} $f = x^3+ y^2z$: the cuspidal cubic with $\uR(f) = 3$, $\rmR(f) = 4$;
\item\label{(iv)class} $f = z(x^2 + y^2 + z^2)$: a conic and a secant line, with $\uR(f) = \rmR(f) = 4$;
 \item\label{(v)class} $f = z(x^2 + yz)$: a conic and a tangent line, with $\uR(f) = 3$, $\rmR(f) = 5$;
\item\label{(vi)class} $f = xyz$: three lines intersecting generically, with $\uR(f) = \rmR(f) = 4$;
\item\label{(vii)class} $f = xy(x+y)$: three lines intersecting in a single point, with $\uR(f) = \rmR(f) = 2$;
\item\label{(viii)class} $f = x^2y$: a double line and a transverse line, with $\uR(f) = 2$, $\rmR(f) = 3$;
\item\label{(ix)class} $f = x^3$: a triple line, with $\uR(f) = \rmR(f) = 1$.
 \end{enumerate}
\begin{proposition}\label{prop:submult_planecubics}
 Let $f \in \Sym^3V$, $\dim V = 3$. Then, 
 \[
\rmR_{X_3^{\times 2}}(f\otimes f) < \rank_{X_3}(f)^2 \qquad \text{ if and only if } \qquad \rank_{X_3}(f) < \brank_{X_3}(f).
 \]
  \end{proposition}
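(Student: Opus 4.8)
The plan is to verify the equivalence orbit by orbit, using the classification of ternary cubics recalled above (items (i)--(x), following \cite{CarCatOne:WaringLoci}). The forms with $\uR < \rmR$ are exactly the cuspidal cubic $x^3+y^2z$, the conic-plus-tangent $z(x^2+yz)$, and $x^2y$; for every other orbit $\uR=\rmR$. Thus the statement splits into two implications: (A) if $\uR(f)=\rmR(f)=r$ then $\rmR_{X_3^{\times 2}}(f^{\otimes 2})=r^2$; and (B) for the three gap orbits, $\rmR_{X_3^{\times 2}}(f^{\otimes 2})<\rmR_{X_3}(f)^2$.

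For direction (A) I would argue exactly as in the binary case of Theorem \ref{thm:submult_binary}: exhibit a flattening whose lower bound \eqref{eqn: flattening lower bound} equals $r$, and then invoke multiplicativity of flattening lower bounds. When $r\le 3$ (the Fermat cubic, the three concurrent lines $xy(x+y)$, and the triple line $x^3$) the catalecticant $\cat_1(f)$ has rank exactly $r$ while $\cat_1(\ell^3)$ has rank $1$, so \eqref{eqn: flattening lower bound} gives $\uR(f)\ge r$, which is tight. When $r=4$ (the generic, nodal, conic-plus-secant, and three-general-lines orbits) the forms are precisely those lying off the Aronhold hypersurface $\sigma_3(X_3)$; there the border rank $4$ is not certified by the catalecticant (which only yields $\uR\ge 3$) but by the skew-symmetric Young flattening whose Pfaffian is the Aronhold invariant, a flattening in the sense of \cite{LanOtt:EqnsSecantVarsVeroneseandOthers}, for which \eqref{eqn: flattening lower bound} equals $4$. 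Since flattening lower bounds multiply under Kronecker product, in every case $\rmR_{X_3^{\times 2}}(f^{\otimes 2})\ge \uR_{X_3^{\times 2}}(f^{\otimes 2})\ge r^2$, and together with submultiplicativity \eqref{trivial upper bounds} this forces $\rmR_{X_3^{\times 2}}(f^{\otimes 2})=r^2$.

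For direction (B) I would produce explicit short decompositions of $f^{\otimes 2}$. For $x^2y$, the form lies on the rational normal cubic $\calC_3=\nu_3(\bbP\langle x,y\rangle)\subseteq X_3$, with $\uR_{\calC_3}=2<3=\rmR_{\calC_3}$; hence Theorem \ref{thm:submult_binary} gives $\rmR_{\calC_3^{\times 2}}((x^2y)^{\otimes 2})\le 8$, so $\rmR_{X_3^{\times 2}}((x^2y)^{\otimes 2})\le 8<9$. For the cuspidal cubic $f=x^3+y^2z$, the two summands use disjoint variables, $x^3\in X_3$ having rank $1$ and $y^2z$ having rank $3$; expanding $f^{\otimes 2}$ into its four blocks and bounding $(y^2z)^{\otimes 2}$ by the binary estimate $8$ gives $\rmR_{X_3^{\times 2}}(f^{\otimes 2})\le 1+3+3+8=15<16$. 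For the conic-plus-tangent $f=z(x^2+yz)$ with $\rmR(f)=5$, I would use the rank-drop construction of \cite{ChrJenZui:NonMultTensorRank}: choose a linear form $\ell$ and a scalar $\eps$ so that $q_0=f-2\eps\ell^3$ and $q_1=f-\eps\ell^3$ both have rank $4$, and then use the identity $f^{\otimes 2}=q_0\otimes q_0 + q_1\otimes(2\eps\ell^3) + (2\eps\ell^3)\otimes q_1$ to conclude $\rmR_{X_3^{\times 2}}(f^{\otimes 2})\le 16+4+4=24<25$.

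The main obstacle is twofold, and both parts live in the border-rank-$4$ regime. In direction (A) it is the input that the border rank of a cubic off $\sigma_3(X_3)$ is computed by a \emph{multiplicative} flattening: this forces one to leave the catalecticant and use the Aronhold (Young) flattening, together with the numerical verification that its bound \eqref{eqn: flattening lower bound} is exactly $4$ on each of the four border-rank-$4$ orbits, not merely on the generic one. In direction (B), the delicate point is the conic-plus-tangent case: one must guarantee that a suitable line $\{f-t\ell^3\}$ through a point of $X_3$ meets the locus of rank-$4$ cubics in at least two points $t=\eps,2\eps$, so that $q_0$ and $q_1$ genuinely have rank $4$ rather than dropping below $4$ or remaining at $5$. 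This is a genericity argument resting on the fact that $5$ is the maximal rank of a plane cubic and is attained only on a proper subvariety, so that the rank-$4$ locus is dense and constructible.
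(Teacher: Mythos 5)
Your overall route coincides with the paper's: the same orbit-by-orbit split, multiplicativity of flattening lower bounds (via the Koszul/Aronhold flattening, the paper's $f^{\wedge 1} : V \otimes V^* \to \Lambda^2 V \otimes V$) for the orbits with $\uR = \rmR$, reduction to Theorem \ref{thm:submult_binary} for the essentially binary orbits, the block bound $1+3+3+8 = 15 < 16$ for the cuspidal cubic, and the rank-drop identity \eqref{eq:CJZdrop} for the conic-plus-tangent. One small simplification on the equality side: you do not need a separate numerical verification of the Young flattening bound on each of the four border-rank-$4$ orbits, since the paper invokes the classical fact that $\uR_{X_3}(f) = 4$ if and only if $\rk(f^{\wedge 1}) = 8$, which covers all of them at once.

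The genuine gap is exactly where you locate it, and your proposed fix does not close it. For $f = z(x^2+yz)$ you need two points $f - \eps\ell^3$ and $f - 2\eps\ell^3$ of rank exactly $4$ on a pencil through $f$ and a cube $\ell^3$. Density and constructibility of the rank-$4$ locus in $\bbP \Sym^3 V$ is not enough: by the classification, every plane cubic of rank $\neq 4$ has border rank at most $3$, so the complement of the rank-$4$ locus is contained in the Aronhold hypersurface $\sigma_3(X_3)$, and \emph{both} base points of your pencil --- $f$ itself, which has border rank $3$, and $\ell^3$ --- lie on that hypersurface. A line whose visible anchor points all lie in $\sigma_3(X_3)$ can perfectly well be contained in it (the pencil $\langle x^3, y^3 \rangle$, for instance, misses the rank-$4$ locus entirely), so genericity in the ambient $\bbP^9$ says nothing about your particular pencil. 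What must be shown is that for some choice of $\ell$ the Aronhold invariant does not vanish identically on $\langle f, \ell^3 \rangle$; once that is known, all but finitely many points of the pencil have border rank $4$, hence rank $4$ by the classification, and your count $16+4+4 = 24 < 25$ goes through. The paper supplies precisely this missing step by the explicit computation $\rk(f_1^{\wedge 1}) = 8$ for $f_1 = z(x^2+yz) - y^3$. (By contrast, the scenario you worry about of the rank ``dropping below $4$'' is excluded for free: if $\rmR_{X_3}(f - t\ell^3) \leq 3$ for some $t \neq 0$, then $\rmR_{X_3}(f) \leq 4$, contradicting $\rmR_{X_3}(f) = 5$.)
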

  \begin{proof}
  The cases \eqref{(vii)class}, \eqref{(viii)class} and \eqref{(ix)class} are already included in Theorem \ref{thm:submult_binary}. Hence, it suffices to show that $\rmR_{X_3^{\times 2}}(f\otimes f)  = \rank_{X_3}(f)^2$ in cases \eqref{(0)class}, \eqref{(i)class}, \eqref{(ii)class}, \eqref{(iv)class}, \eqref{(vi)class} and $\rmR_{X_3^{\times 2}}(f\otimes f) < \rank_{X_3}(f)^2$ in cases \eqref{(iii)class}, \eqref{(v)class}.

  The equality in cases  \eqref{(0)class}, \eqref{(i)class}, \eqref{(ii)class}, \eqref{(iv)class}, \eqref{(vi)class} follows by multiplicativity of flattening lower bounds, which implies multiplicativity of border rank. For $f \in \Sym^3V$, define the linear map  $f^{\wedge 1}$ given by the following composition
  \[
 f^{\wedge 1} :  V \otimes V^* \xto{\id_V \boxtimes \cat_1(f)} V \otimes \Sym^2 V \xto{\delta}  \Lambda^2 V \otimes V,
  \]
  where the first map is the catalecticant map \emph{augmented} via the identity on a factor $V$ and the second map $\delta :V \otimes \Sym^2 V \to \Lambda^2 V \otimes V$ is the classical Koszul differential \cite[Chapter 17]{Eis:CommutativeAlgebra}. It is a classical fact that $\uR_{X_3}(f) = 4$ if and only if $\rk(f^{\wedge 1}) = 8$; see \cite[Section 3.10]{Lan12:Book}. In particular, the flattening map $\Sym^3 V \to \Hom( V \otimes V^* , \Lambda^2 V \otimes V)$ provides a flattening lower bound in the cases  \eqref{(0)class}, \eqref{(i)class}, \eqref{(ii)class}, \eqref{(iv)class}, \eqref{(vi)class}, which is multiplicative.
  
  Strict submultiplicativity in the case \eqref{(iii)class} follows directly from the submultiplicativity for the monomial $y^2z$. Indeed, we have
  \begin{align*}
\rmR_{X_3^{\times 2}}((x^3 + y^2z) \otimes &(x^3 + y^2z))  \\ &\leq \rmR_{X_3^{\times 2}}(x^3 \otimes x^3) + \rmR_{X_3^{\times 2}}(y^2z \otimes x^3) + \rmR_{X_3^{\times 2}}(x^3 \otimes y^2z) + \rmR_{X_3^{\times 2}}(y^2z \otimes y^2z) \\ & = 1 + 3 + 3 + 8 = 15 < 16 = (\rmR_{X_3}(x^3+y^2z))^2. 
  \end{align*}

Consider the case \eqref{(v)class}, i.e., $f = z(x^2+yz)$. Let, $f_\eps = f - \eps y^3$. Then, $\rk (f_1^{\wedge 1}) = 8$, showing $\uR_{X_3}(f_\eps) = 4$, for a generic choice of $\eps$. From the classification, observe that $\rmR_{X_3}(g_\eps) = 4$ as well. In particular, there exists a coefficient $\eps$ such that both $f_\eps$ and $f_{2\eps}$ have rank $4$. Proceeding as in~\eqref{eq:CJZdrop}, we conclude $\rank_{X_3}(f^{\otimes 2})\leq 24$.
\end{proof}

\subsection{A lower bound for rank of tensor product of bivariate monomials} 
We conclude this section providing a lower bound on the rank of the tensor product of two monomials in two variables. We use a method introduced in \cite{CCG}. 
	
Let $m = x^ay^b$, with $a \leq b$. It is known that $\rank_{\calC_{a+b}}(m) = b+1$ (see e.g.  \cite[Remark 24]{BGI}, \cite[Proposition~3.1]{CCG}  and \cite[Corollary 2]{RS11:RankSymmetricForm}) and that $\underline{\rank}_{\calC_{a+b}}(m) = a+1$ (see e.g. \cite[Theorem 1.11]{LaTe10:RanksBorderRanks} and \cite[Corollary 2]{RS11:RankSymmetricForm}). By Theorem \ref{thm:submult_binary}, we have that $\rank_{\calC_{a+b}^{\times 2}}(m^{\otimes 2}) = \rank_{\calC_{a+b}}(m)^2$ if and only if $a = b$. From \cite[Proposition 4.3]{BalBerChrGes:PartiallySymRkW}, we have the lower bound $\rank_{\calC_d^{\times 2}}(m \otimes m) \geq 2d - 1$ when $m = xy^{d-1}$. We provide a lower bound for any $x^ay^b$ with $a < b$. 

The strategy of the proof allows us to formulate the result for the tensor product of two binary monomials, not necessarily equal.

\begin{proposition}\label{prop: bound monomials}
	Let $m_1 = x_1^{a_1}y_1^{b_1}$ and $m_2 = x_2^{a_2}y_2^{b_2}$ with $a_1 \leq b_1$ and $a_2\leq b_2$. Then, 
	\[
	\rank_{\calC_{a_1+b_1} \times \calC_{a_2 + b_2}}(m_1 \otimes m_2) \geq \max \{ (a_1+1)(b_2+1), (a_2+1)(b_1+1)\}.  
\]
\end{proposition}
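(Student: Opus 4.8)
By the symmetry of the two factors it suffices to prove the single bound $\rank_{\calC_{d_1}\times\calC_{d_2}}(m_1\otimes m_2)\ge (a_1+1)(b_2+1)$, where $d_i=a_i+b_i$; interchanging the roles of $m_1$ and $m_2$ then produces the term $(a_2+1)(b_1+1)$, and one takes the maximum. Following the method of \cite{CCG}, I would fix a minimal decomposition $m_1\otimes m_2=\sum_{i=1}^r \ell_i^{d_1}\otimes\lambda_i^{d_2}$ and exploit that the two target factors are detected by two different mechanisms: $a_1+1=\brank_{\calC_{d_1}}(m_1)$ is seen by the order-$a_1$ catalecticant of $m_1$ (which, since $a_1\le b_1$, has rank exactly $a_1+1$ and is injective), whereas $b_2+1=\rank_{\calC_{d_2}}(m_2)$ is seen only through the \emph{reducedness} of the points $\lambda_i$, exactly as in the apolarity computation of the Waring rank of a binary monomial recalled above.

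The factor $b_2+1$ is the easier half. Grouping the summands according to the distinct second points $\mu_1,\dots,\mu_s\in\bbP V_2$ and contracting the first slot by a single operator $D\in\Sym^{d_1}V_1^*$ with $D\circ m_1\neq 0$ rewrites $m_2$ as a linear combination of the $s$ powers $\mu_j^{d_2}$; hence $s\ge \rank_{\calC_{d_2}}(m_2)=b_2+1$. For the factor $a_1+1$, I would apply the $a_1+1$ linearly independent order-$a_1$ derivatives $\partial_{x_1}^{\,j}\partial_{y_1}^{\,a_1-j}$ ($0\le j\le a_1$) to the first slot: since $a_1\le b_1$ these send $m_1$ to the linearly independent monomials $x_1^{a_1-j}y_1^{b_1-a_1+j}$, producing $a_1+1$ independent ``slice'' identities in $\Sym^{b_1}V_1\otimes\Sym^{d_2}V_2$. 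The plan is then to make the two counts multiply: each of the $\ge b_2+1$ effective second-directions must be forced to carry first-factor multiplicity at least $a_1+1$.

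It is worth isolating why this is automatic in one regime. When the second powers are independent, i.e.\ $s\le d_2+1$, extracting the grouped first-factor aggregates $P_j=\sum_{i\colon \lambda_i\sim\mu_j}(\text{scalar})\,\ell_i^{d_1}$ by operators dual to the $\mu_j^{d_2}$ shows that each $P_j$ is a scalar multiple of $m_1$; thus every nonzero group contributes $\rank_{\calC_{d_1}}(m_1)=b_1+1$, and one even obtains the stronger bound $(b_1+1)(b_2+1)$. The genuinely hard regime is therefore the one in which \emph{both} factors involve more than $d_i+1$ distinct points, so that neither family of power-vectors is linearly independent and the above extraction breaks down.

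The main obstacle is precisely this combination in the degenerate regime: a priori some groups may consist of a single point, so a naive fibre-product count collapses. I expect the decisive input to be the square-free mechanism of \cite{CCG}, phrased through apolarity on $\bbP^1\times\bbP^1$: the points $([\ell_i],[\lambda_i])$ form a reduced set $Z$ whose bihomogeneous ideal lies in $J=(X_1^{a_1+1},Y_1^{b_1+1},X_2^{a_2+1},Y_2^{b_2+1})$, and in the relevant low second-degree every element of $J$ is divisible by $X_2^{a_2+1}$ (or $Y_2^{b_2+1}$), so a reduced — hence square-free — configuration cannot be supported in too small a bidegree. This is exactly what upgrades the mere border-rank catalecticant contribution $a_2+1$ of the second factor to its full rank $b_2+1$, while the order-$a_1$ catalecticant of the first factor supplies the remaining factor $a_1+1$. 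Carrying this out so that the two contributions genuinely multiply — ruling out coincidences that would let the first-factor directions degenerate across the $b_2+1$ distinct second-directions — is the crux, and the case of many distinct points in both factors is where I expect the real work to lie.
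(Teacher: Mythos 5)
Your proposal is not a complete proof: it establishes only the two ingredients that are easy to get separately, and leaves open precisely the case that constitutes the actual content of the proposition. Concretely, your contraction argument correctly shows that the number $s$ of distinct second-factor directions satisfies $s \geq \rank_{\calC_{d_2}}(m_2) = b_2+1$, and your ``easy regime'' argument (when $s \leq d_2+1$, so the powers $\mu_j^{d_2}$ are linearly independent) is also correct and even gives $(b_1+1)(b_2+1)$. But the plan for the general case --- forcing \emph{each} of the $\geq b_2+1$ second-directions to carry at least $a_1+1$ first-factor points --- comes with no mechanism at all, and as you yourself note, nothing prevents a group from consisting of a single point. A group-by-group local bound of this kind is not something the order-$a_1$ catalecticant of $m_1$ can deliver once the $\mu_j^{d_2}$ are dependent, because the individual group aggregates can then no longer be extracted from the decomposition. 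Your final paragraph gestures at apolarity on $\bbP^1\times\bbP^1$ as ``the decisive input,'' but explicitly defers ``the real work''; that deferred work is the proof.

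For comparison, the paper's argument never groups the decomposition and never attempts any per-direction count. By the multigraded Apolarity Lemma, a decomposition of $m_1\otimes m_2$ of size $r$ yields a reduced set $\bbX \subset \bbP^1\times\bbP^1$ with $|\bbX| = r$ and $I_\bbX \subseteq (m_1\otimes m_2)^\perp = (x_1^{a_1+1},y_1^{b_1+1},x_2^{a_2+1},y_2^{b_2+1})$. One then removes the points of $\bbX$ lying on the line $\{x_1=0\}$, i.e.\ passes to $\bbX'$ with $I_{\bbX'} = I_\bbX : (x_1)$, on whose coordinate ring $x_1$ is a nonzerodivisor; this lets one bound $|\bbX'|$ from below by the \emph{sum along a column} of the bigraded Hilbert function of $S/(I_{\bbX'}+(x_1))$. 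Since $I_{\bbX'}+(x_1) \subseteq (m_1\otimes m_2)^\perp : (x_1) + (x_1) = (x_1,y_1^{b_1+1},x_2^{a_2+1},y_2^{b_2+1})$, whose quotient is $\Bbbk[y_1]/(y_1^{b_1+1}) \otimes \Bbbk[x_2,y_2]/(x_2^{a_2+1},y_2^{b_2+1})$, taking the column $j=a_2$ gives the lower bound $(b_1+1)(a_2+1)$; exchanging the factors gives $(a_1+1)(b_2+1)$. This global Hilbert-function count is exactly what replaces your unprovable ``multiplication of the two counts'': the factor coming from the full rank of one monomial is produced by the colon with $x_1$ (residuation away from a divisor), not by identifying $\geq b_1+1$ points over each direction, and the other factor is produced by the choice of column, not by a catalecticant applied slot-wise. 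Without this (or some equivalent global argument) your proof has a genuine gap exactly where both families of powers are linearly dependent.
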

\begin{proof}
We show $\rmR_{\calC_{a_1+b_1} \times \calC_{a_2 + b_2}}(m_1 \otimes m_2) \geq (a_1+1)(b_2+1)$.

Regard $m_1 \otimes m_2$ in the bi-graded polynomial ring $S = \Bbbk[x_1,y_1]\otimes \Bbbk[x_2,y_2] = \Bbbk[x_1,y_1;x_2,y_2]$, where $\deg(x_1) = \deg(y_1) = (1,0)$ and $\deg(x_2) = \deg(y_2) = (0,1)$. Let $S_{(i,j)}$ be the $\Bbbk$-vector space of homogeneous polynomials of bi-degree $(i,j)$. The apolar ideal of $m_1 \otimes m_2$ is
\[
(m_1 \otimes m_2)^\perp = (x_1^{a_1+1},y_1^{b_1+1},x_2^{a_2+1},y_2^{b_2+1}). 
\]
From the multigraded version of the Apolarity Lemma (see e.g. \cite{GalRanVil:VarsApolarSubschToricSurfaces, Galazka:MultigradedApolarity, Bernardi_2011, BALLICO20196, Bernardi_2008}), the rank of $m_1 \otimes m_2 $ coincides with the minimal cardinality of a reduced set of points $\bbX \subset \bbP^1 \times \bbP^1$ whose bi-graded defining ideal $I_\bbX$ is contained in $(m_1 \otimes m_2)^\perp$. For any bi-graded ideal $I \subset S$, we denote $I_{(i,j)} = I \cap S_{(i,j)}$. The Hilbert function of $S/I$ in degree $(i,j)$ is $\HF_{S/I}(i,j) := \dim_\Bbbk S_{(i,j)}/I_{(i,j)}$. We refer to \cite{SV06} for basic properties of the bi-graded Hilbert function of ideals of points in multi-projective space. For convenience, we just recall two facts about the Hilbert function of ideals of points that we will use in the rest of the proof:
\begin{enumerate}
	\item the Hilbert function of $S/I_\bbX$ is strictly increasing until it gets constant along the $i$-th row $\left(\HF_{S/I_\bbX}(i,j)\right)_{j \geq 0}$ and along the $j$-th column $\left(\HF_{S/I_\bbX}(i,j)\right)_{i \geq 0}$;
	\item if $i,j \gg 0$, then $\HF_{S/I_\bbX}(i,j) = |\bbX|$.
\end{enumerate}

Let $\bbX \subseteq \bbP^1 \times \bbP^1$ be a set of points with $I_\bbX \subseteq (m_1 \otimes m_2)^\perp$. Let $\bbX' = \bbX \setminus \{ x_1 = 0\}$: we have $I_{\bbX'} = I_\bbX : (x_1)$ and $x_1$ is not a zero-divisor in $S/ I_{\bbX'}$. This implies that, for $i \gg 1$ and for $j \geq 1$:
\[
|\bbX| \geq |\bbX'| \geq  {\rmHF}_{S/I_\bbX'}(i,j) = \sum_{k=1}^i {\rm HF}_{S/I_{\bbX'} + (x_1)}(k,j). 
\]
Now, since $I_\bbX \subset (m_1 \otimes m_2)^\perp$, we have  $I_{\bbX'} \subset (m_1 \otimes m_2)^\perp : (x_1)$, so that 
\[
\sum_{k \geq 0} {\rmHF}_{S/I_{\bbX'} + (x_1)}(k,j) \geq \sum_{k \geq 0}{\rmHF}_{S/\left((m_1 \otimes m_2)^\perp:(x_{1}) + (x_{1})\right)}(k,j).
\]
Let $J = (m_1 \otimes m_2)^\perp : (x_1) + (x_1) = (x_1,y_1^{b+1},x_2^{a+1},y_2^{b+1})$. We have 
\[
S/J \simeq \Bbbk[x_1,y_1] / (x_1,y_1^{b_1+1}) \otimes \Bbbk[x_2,y_2] / (x_2^{a_2+1},y_2^{b_2+1}).
\]
Therefore,
\[
\sum_{k \geq 0} {\rmHF}_{S/J}(k,j) = \sum_{k \geq 0}\dim\left(\Bbbk[x_1,y_1] / (x_1,y_1^{b_1+1})\right)_{(k)} \cdot \dim\left(\Bbbk[x_2,y_2] / (x_2^{a_2+1},y_2^{b_2+1})\right)_{(j)}. 
\]

Let $j = a_2$, so that the right hand side is~$(b_1+1)(a_2+1)$. We conclude that
\[
\rank_{\calC_{a_1+b_1} \times \calC_{a_2 + b_2}}(m_1 \otimes m_2) \geq |\bbX| \geq |\bbX'| \geq (b_1+1)(a_2+1). 
\]
Exchanging the roles of $m_1$ and $m_2$, we get $\rmR_{\calC_{a_1+b_1} \times \calC_{a_2 + b_2}} ( m_1 \otimes m_2) \geq (b_2+1) (a_1+1)$.
\end{proof}
The bound of Proposition \ref{prop: bound monomials} is far from being sharp: recall that $\rmR_{\bbC_3 \times \bbC_3}(x^2 y \otimes x^2 y) = 8$ \cite{ChFri:TwoThreeQbitRank8,BalBerChrGes:PartiallySymRkW}, while the lower bound from Proposition \ref{prop: bound monomials} is just $6$.

\section{Minimal decompositions of products and products of minimal decompositions}\label{sec:product_decomp}

In this section, we focus on cases in which multiplicativity of rank holds. In particular, we ask whether minimal decompositions of a tensor product always arise as tensor products of minimal decompositions of the factors. 

We consider varieties $X_1,\ldots,X_k$, with $X_i \subset \bbP V_i$, for all $i = 1,\ldots,k$. Given sets of points $S_1,\ldots,S_k$ with $S_i \subset \bbP V_i$, denote by $S_1\times\ldots\times S_k$ both the cartesian product in $\bbP V_1 \times \ldots \bbP V_k$ and its image with respect to the Segre embedding $\bbP V_1 \times \ldots \times \bbP V_k \rightarrow \bbP (V_1\ootimes V_k)$, i.e., the set $\{z_1\otimes\ldots\otimes z_k ~|~ z_i \in S_i\}$.

First, we provide an immediate result on minimality of product decompositions.
\begin{lemma}\label{lemma: product is minimal wrt inclusion}
 For $i = 1,\ldots,k$, let $X_i \subset \bbP V_i$ be a variety, let $p_i \in X_i$ and let $S_i \subseteq X_i$ be a non redundant set of points spanning $p_i$, namely $p_i \in \langle S_i \rangle$ and no proper subset of $S_i$ spans $p_i$. Let $S = S_1 \times \cdots \times S_k$. Then, there is no proper subset $T \subsetneq S\subset \bbP (V_1\ootimes V_k)$ such that $p_1 \otimes \cdots \otimes p_k \in \langle T \rangle$.
\end{lemma}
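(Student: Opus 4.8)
The plan is to reduce the statement to the elementary multilinear fact that a tensor product of linearly independent families is again linearly independent, after first translating the combinatorial non-redundancy hypothesis into a statement about linear independence and nonvanishing of coefficients. Throughout I would pass to the underlying vector spaces, as in the proof of Lemma~\ref{lemma: spanning p1p2 implies factors span p1 and p2}.

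First I would show that non-redundancy of each $S_i$ forces $S_i$ to be linearly independent. Writing $S_i = \{s_{i,1} \vvirg s_{i,n_i}\}$, suppose the $s_{i,j}$ satisfied a nontrivial relation $\sum_j c_{i,j} s_{i,j} = 0$. Starting from any expression $p_i = \sum_j \lambda_{i,j} s_{i,j}$, one could add a suitable scalar multiple of the relation to cancel a coefficient $\lambda_{i,j_0}$ with $c_{i,j_0}\neq 0$, thereby exhibiting $p_i$ in the span of a proper subset of $S_i$ and contradicting non-redundancy. Hence each $S_i$ is linearly independent, the expression $p_i = \sum_j \lambda_{i,j} s_{i,j}$ is \emph{unique}, and a second appeal to non-redundancy (if some $\lambda_{i,j}=0$, then a proper subset spans $p_i$) shows that every coefficient $\lambda_{i,j}$ is nonzero.

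Next I would expand the product in $V_1 \ootimes V_k$ to obtain
\[
p_1 \ootimes p_k = \sum_{(j_1 \vvirg j_k)} \Big( \prod_{i=1}^k \lambda_{i,j_i} \Big)\, s_{1,j_1} \ootimes s_{k,j_k},
\]
so each element $s_{1,j_1} \ootimes s_{k,j_k}$ of $S$ appears with coefficient $\prod_i \lambda_{i,j_i}$, a product of nonzero scalars and hence nonzero. Since each $S_i$ is linearly independent, the family $S$ is linearly independent in $V_1 \ootimes V_k$: extending each $S_i$ to a basis of $V_i$ realizes the vectors $s_{1,j_1} \ootimes s_{k,j_k}$ as part of the associated tensor-product basis. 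Therefore the displayed expression is the unique representation of $p_1 \ootimes p_k$ in terms of $S$, with all coefficients nonzero. If we had $p_1 \ootimes p_k \in \langle T \rangle$ for some proper subset $T \subsetneq S$, then expressing it through $T$ and padding with zero coefficients on $S\setminus T$ would yield a representation in terms of $S$ with a vanishing coefficient, contradicting uniqueness. This proves the claim.

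The only genuinely delicate point is the first step, namely extracting linear independence of the $S_i$ together with the nonvanishing of all coefficients from the purely combinatorial non-redundancy hypothesis; once this is in place, the remainder is the standard fact that tensoring independent families produces an independent family, combined with uniqueness of coordinates.
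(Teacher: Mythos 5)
Your proof is correct and follows essentially the same route as the paper's: both arguments come down to observing that $p_1 \otimes \cdots \otimes p_k$ has all coordinates nonzero with respect to the linearly independent family $S$, so the span of any proper subset cannot contain it. The only differences are cosmetic — the paper argues by induction on $k$ with a matrix-units picture for $k=2$ (the all-ones matrix cannot lie in the span of a proper subset of matrix units), while you treat all $k$ factors at once and, usefully, make explicit the step the paper leaves implicit, namely that non-redundancy forces each $S_i$ to be linearly independent and all coefficients of $p_i$ to be nonzero.
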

\begin{proof}
 We proceed by induction on $k$. Let $k = 2$. Write $S_1 = \{ a_1 \vvirg a_{s_1}\}$ and $S_2 = \{ b_1 \vvirg b_{s_2}\}$. Without loss of generality, we may assume $p_1 = \sum_i a_i$ and $p_2 = \sum_i b_i$. So $p_1 \otimes p_2$ can be regarded as an element of the space of matrices $\langle S_1 \rangle \otimes \langle S_2 \rangle \subseteq V_1 \otimes V_2$. In particular, the set $S_1 \times S_2 = \{ a_i \otimes b_j: i= 1 \vvirg s_1, j = 1 \vvirg s_2\}$ gives a basis for $\langle S_1 \rangle \otimes \langle S_2 \rangle $ and we can choose coordinates so that $a_i \otimes b_j$ is represented by the matrix having the $(i,j)$-th entry equal to $1$ and zero elsewhere, and $p_1 \otimes p_2$ is represented by the matrix having $1$ in every entry. We conclude that no proper $T \subsetneq S_1 \times S_2$ can span $p_1 \otimes p_2$ because every element of $\langle T \rangle$ has a zero entry.
 
 If $k \geq 2$, the statement follow by induction regarding $p_1\ootimes p_k$ as $p_1 \otimes (p_2\ootimes p_k)$.
\end{proof}

\begin{notation}
	Given a Cartier divisor $D$ and a set of points $A$, we denote by $\Res_D(A)$ the \textit{residual set of points with respect to $A$}, namely the set $A \smallsetminus (A \cap D)$. In particular, if $\calI_A$ and $\calI_D$ are the ideal sheaves defining $A$ and $D$, respectively, then $\calI_{\Res_D(A)} = \calI_A : \calI_D$.
\end{notation}

 In Theorem \ref{thm: identifiability product via embeddings}, we will use a slight variant of \cite[Lemma 2.5]{BalBerChrGes:PartiallySymRkW}. Given a variety $X$, a very ample line bundle $\calL$ on $X$, and a set of points $S \subseteq X$, we say that $S$ \emph{imposes independent conditions on the sections of $\calL$} if $h^1(\calI_S \otimes \calL) = h^1(\calL)$ or equivalently if the restriction map $H^0(\calL) \to H^0(\calL|_S)$ is surjective.
\begin{lemma}\label{lemma: bbcg revisited} 
  Let $X$ be a variety and let $\calL$ be a very ample line bundle on $X$. Let $V = H^0(\calL)^*$ and identify $X$ with its embedding in $\bbP V$. Let $p \in \bbP V$ and let $A,B \subset X$ be non redundant sets of points spanning $p$ in $\bbP V$. Assume $h^1(\mathcal I_{B}\otimes \calL) = h^1(\calL)$. Let $D$ be an effective Cartier divisor on $X$ such that $\Res_D(A)\cap\Res_D(B) = \emptyset$. If $h^1 (\calI_{ \Res_{D} (A \cup B)} \otimes \calL(-D)) = 0$ then $A \cup B \subseteq D$.
\end{lemma}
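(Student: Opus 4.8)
The plan is to extract from the two decompositions a single linear relation among the points of $A\cup B$, and then to show that the cohomological hypothesis $h^1(\calI_{\Res_D(A\cup B)}\otimes\calL(-D))=0$ forces this relation to be trivial away from $D$, which (via the disjointness hypothesis) is possible only if no point of $A\cup B$ lies off $D$. First I would dispose of the degenerate case $A=B$: here $\Res_D(A)\cap\Res_D(B)=\Res_D(A)$, so the hypothesis $\Res_D(A)\cap\Res_D(B)=\emptyset$ gives $\Res_D(A)=\emptyset$, that is $A\cup B=A\subseteq D$. So assume $A\ne B$.

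Identifying $X$ with its image in $\bbP V$, each point $x\in X$ is represented by an evaluation functional $\hat x\in V=H^0(\calL)^*$, and $p\in\langle A\rangle\cap\langle B\rangle$ yields two expressions $\hat p=\sum_{a\in A}c_a\hat a=\sum_{b\in B}d_b\hat b$. Non-redundancy of $A$ and of $B$ forces every $c_a$ and every $d_b$ to be nonzero, since a vanishing coefficient would exhibit $p$ in the span of a proper subset. Subtracting gives a relation $R\colon \sum_{a\in A}c_a\hat a-\sum_{b\in B}d_b\hat b=0$ in $V$; after collecting the points of $A\cap B$, the coefficient $R_z$ of $\hat z$ is $c_z$ for $z\in A\setminus B$, $-d_z$ for $z\in B\setminus A$, and $c_z-d_z$ for $z\in A\cap B$. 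In particular $R$ is a relation whose coefficient at every point \emph{outside} $A\cap B$ is nonzero.

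The engine of the argument is that $R$ persists as a relation among the residual points. Indeed $R$ pairs to zero with all of $H^0(\calL)$, because for $s\in H^0(\calL)$ one has $\sum_a c_a\,s(a)-\sum_b d_b\,s(b)=\hat p(s)-\hat p(s)=0$. Restricting to $s\in H^0(\calL(-D))\subseteq H^0(\calL)$, every such $s$ vanishes on $D$, hence on $(A\cup B)\cap D$, so the relation collapses to $\sum_{z\in\Res_D(A\cup B)}R_z\,s(z)=0$ for all $s\in H^0(\calL(-D))$. Now $h^1(\calI_{\Res_D(A\cup B)}\otimes\calL(-D))=0$ makes the restriction $H^0(\calL(-D))\to H^0\!\big(\calL(-D)|_{\Res_D(A\cup B)}\big)$ surjective, i.e.\ the evaluation functionals at the residual points are linearly independent on $H^0(\calL(-D))$. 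Therefore $R_z=0$ for every $z\in\Res_D(A\cup B)=(A\cup B)\setminus D$.

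Finally I would invoke the disjointness hypothesis. Since $\Res_D(A)\cap\Res_D(B)=\emptyset$ is equivalent to $A\cap B\subseteq D$, every $z\in(A\cup B)\setminus D$ lies in exactly one of $A\setminus B$ or $B\setminus A$, so $R_z$ equals $c_z$ or $-d_z$ and is nonzero; combined with $R_z=0$ this forces $(A\cup B)\setminus D=\emptyset$, that is $A\cup B\subseteq D$. The hypothesis $h^1(\calI_B\otimes\calL)=h^1(\calL)$ is automatic for reduced non-redundant $B$ (such sets always impose independent conditions), and is what secures the clean decomposition structure in the scheme-theoretic generalization. The main obstacle is conceptual rather than computational: one must recognize that $h^1(\calI_{\Res_D(A\cup B)}\otimes\calL(-D))=0$ is precisely the statement annihilating $R$ off $D$, and that the only place requiring genuine care is checking that $R$ restricts correctly to the residual points, matching the trivializations of $\calL$ and $\calL(-D)$ away from $D$ (where the two line bundles agree, so nonvanishing of $R_z$ is preserved).
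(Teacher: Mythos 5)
Your proof is correct and is, in substance, the same argument the paper relies on: the paper's proof is only a citation to Lemma 2.5 of Ballico--Bernardi--Christandl--Gesmundo (with $\calL$ replacing $\calO_{\bbP^N}(1)$), and the mechanism of that lemma is precisely your steps --- extract the relation $R$ from the two non-redundant representations of $p$, pair it against $H^0(\calL(-D))\subseteq H^0(\calL)$ so that the part of $R$ supported on $D$ drops out, and use $h^1(\calI_{\Res_D(A\cup B)}\otimes\calL(-D))=0$ to force the off-$D$ coefficients to vanish, which is incompatible with non-redundancy and $A\cap B\subseteq D$ unless $A\cup B\subseteq D$. Your side remark is also accurate: for reduced non-redundant sets the hypothesis $h^1(\calI_B\otimes\calL)=h^1(\calL)$ is automatic (non-redundancy forces linear independence of the points, which under the complete embedding by $|\calL|$ is exactly imposing independent conditions), consistent with the fact that your argument never actually invokes it.
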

\begin{proof}
 The proof is essentially the same as \cite[Lemma 2.5]{BalBerChrGes:PartiallySymRkW} with the bundle $\calL$ replacing the bundle $\calO_{\bbP ^N}(1)$. The condition $h^1(\calI_B(1)) = 0$ in \cite[Lemma 2.5]{BalBerChrGes:PartiallySymRkW} is replaced by $h^1(\calI_B \otimes \calL) = h^1(\calL)$ and the same argument provides the proof.
\end{proof}
Recall the definition of \textit{identifiability}: given $X \subseteq \bbP ^N$ and $p \in \bbP^N$ with $\rmR_{X}(p)=r$, one says that $p$ is \emph{identifiable} if there is a unique set of $r$ points of $X$ whose span contains $p$. We refer to \cite{Chiantini_quantum} for a basic introduction to identifiability problems.
\begin{theorem}\label{thm: identifiability product via embeddings}
Let $X_1 \vvirg X_k$ be irreducible projective varieties. For every $i = 1 \vvirg k$, let $\calL_i$ be a very ample line bundle on $X_i$ and identify $X_i$ with the embedded subvariety in $V_i = H^0(\calL_i)^*$ defined by the sections of $\calL_i$. Let $\calM_i$ and $\calN_i$ be line bundles on $X_i$ with an isomorphism $\calM_i \otimes \calN_i \simeq \calL_i$. Let $p_i \in \bbP V_i$ and $S_i \subseteq X_i$ be a set of points evincing $\rmR_{X_i}(p_i)$, for every $i = 1 \vvirg k$. Then:
 \begin{enumerate}[(i)]
\item\label{(i)statement} If for every $i = 1 \vvirg k$, $S_i$ imposes independent conditions on the sections of $\calN_i,\calM_i$ and $\calL_i$ and in addition $h^1(M_i) = 0$, then 
  \[
  \rmR_{X_1 \ttimes X_k}(p_1 \ootimes p_k) = \textprod_{i = 1}^k \rmR_{X_i}(p_i).
  \]

\item\label{(ii)statement} If for every $i =1 \vvirg k$, $\calI_{S_i} \otimes \calN_i$ has no base points outside $S_i$, then $S_1 \ttimes S_k$ is the unique set of points evincing $\rmR_{X_1 \ttimes X_k}(p_1 \otimes \cdots \otimes p_k)$. In particular, the $p_i$'s are identifiable as well as $p_1\ootimes p_k$.
 \end{enumerate}
\end{theorem}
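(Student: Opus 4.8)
The plan is to treat the two parts separately: part (i) by a flattening (catalecticant-type) lower bound, and part (ii) by the Cayley--Bacharach statement of Lemma~\ref{lemma: bbcg revisited}. In both, the splitting $\calL_i \simeq \calM_i \otimes \calN_i$ is the source of a flattening, and I would work with all $k$ factors at once, since the decomposition into an ``$\calM$-block'' and an ``$\calN$-block'' is intrinsically a two-way split even when $k > 2$.

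For part (i), the upper bound is submultiplicativity \eqref{trivial upper bounds}, so only the lower bound $\rmR_{X_1 \ttimes X_k}(p_1 \ootimes p_k) \geq \textprod_i \rmR_{X_i}(p_i)$ is in question. For each $i$ I would consider the flattening $\psi_i : V_i = H^0(\calL_i)^* \to H^0(\calM_i)^* \otimes H^0(\calN_i)^*$ dual to the multiplication $H^0(\calM_i) \otimes H^0(\calN_i) \to H^0(\calL_i)$. Each point $\phi_{\calL_i}(x)$ of $X_i$ maps to the rank-one element $\phi_{\calM_i}(x) \otimes \phi_{\calN_i}(x)$, so the denominator in \eqref{eqn: flattening lower bound} equals $1$; writing $p_i$ as a combination of the $r_i$ points of $S_i$, the image $\psi_i(p_i) = \sum_{x \in S_i} \lambda_x\, \phi_{\calM_i}(x) \otimes \phi_{\calN_i}(x)$ has rank exactly $r_i$, precisely because the hypothesis that $S_i$ impose independent conditions on $\calM_i$ and on $\calN_i$ makes both families $\{\phi_{\calM_i}(x)\}_{x \in S_i}$ and $\{\phi_{\calN_i}(x)\}_{x \in S_i}$ linearly independent. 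Reordering the tensor factors of $\psi_1 \otimes \cdots \otimes \psi_k$ into an $\calM$-block and an $\calN$-block produces a flattening of $V_1 \ootimes V_k$ whose value on each point of $X_1 \ttimes X_k$ again has rank $1$; multiplicativity of the rank of linear maps under Kronecker product (the multiplicativity of flattening lower bounds recalled in Section~\ref{section: homogeneous poly}) then gives flattening rank $\textprod_i r_i$ for $p_1 \ootimes p_k$, yielding the lower bound.

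For part (ii) I would apply Lemma~\ref{lemma: bbcg revisited} on $X = X_1 \ttimes X_k$ with $\calL = \calL_1 \boxtimes \cdots \boxtimes \calL_k$, $\calN = \calN_1 \boxtimes \cdots \boxtimes \calN_k$ and $\calM = \calL \otimes \calN^{-1} = \calM_1 \boxtimes \cdots \boxtimes \calM_k$, taking $A = S := S_1 \ttimes S_k$ and letting $B = \{w_1 \vvirg w_N\}$ be any set evincing $\rmR_X(p_1 \ootimes p_k)$, so $N = \textprod_i r_i = |S|$ by part (i). First I would transport the cohomological inputs from the factors: a Künneth computation gives $h^1(\calM) = 0$ from $h^1(\calM_i) = 0$, and base-point-freeness of $\calI_{S_i} \otimes \calN_i$ outside $S_i$ lifts to base-point-freeness of $\calI_S \otimes \calN$ outside $S$ (a decomposable section $\bigotimes_i s_i$ with $s_i \in H^0(\calI_{S_i} \otimes \calN_i)$ separates any point off $S$ from $S$). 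Next I would reuse the flattening structure: since the flattening of $p_1 \ootimes p_k$ has rank $N$ and equals the sum of the $N$ rank-one terms coming from the $w_l = z_l^{(1)} \ootimes z_l^{(k)}$, both families $\{\bigotimes_i \phi_{\calM_i}(z_l^{(i)})\}_l$ and $\{\bigotimes_i \phi_{\calN_i}(z_l^{(i)})\}_l$ must be linearly independent; pushing any linear relation among the Segre points $\phi_\calL(w_l) = \bigotimes_i \phi_{\calL_i}(z_l^{(i)})$ through the comultiplication $\phi_{\calL_i}(x) \mapsto \phi_{\calM_i}(x) \otimes \phi_{\calN_i}(x)$ shows that $B$ imposes independent conditions on $\calL$, and that every subset of $B$ imposes independent conditions on $\calM$.

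With these inputs I would pick a general $D \in |\calI_S \otimes \calN|$, so that $D \supseteq S$ while $D$ misses the finite set $B \setminus S$ (its base locus lies in $S$). Then $\Res_D(A) = \emptyset$, whence $\Res_D(A) \cap \Res_D(B) = \emptyset$, and $\Res_D(A \cup B) = B \setminus S$; since $B \setminus S$ imposes independent conditions on $\calM = \calL(-D)$ and $h^1(\calM)=0$, we obtain $h^1(\calI_{\Res_D(A \cup B)} \otimes \calL(-D)) = 0$, so Lemma~\ref{lemma: bbcg revisited} forces $A \cup B \subseteq D$, i.e. $B \subseteq D$. This contradicts $D \cap (B \setminus S) = \emptyset$ unless $B \setminus S = \emptyset$; hence $B \subseteq S$ and, by equality of cardinalities, $B = S$. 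Identifiability of each $p_i$ then follows formally, since a second minimal decomposition of some $p_i$ would give, via Lemma~\ref{lemma: product is minimal wrt inclusion} and part (i), a second minimal decomposition of $p_1 \ootimes p_k$ distinct from $S$. The step I expect to be the main obstacle is the Cayley--Bacharach bookkeeping: verifying the residual-disjointness and the vanishing $h^1(\calI_{\Res_D(A \cup B)} \otimes \calM) = 0$, and in particular checking that the independent-conditions property descends from $B$ to arbitrary residual subsets through the comultiplication map.
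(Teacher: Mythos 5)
Your proposal is correct in substance, but it takes a genuinely different route from the paper's in both parts. For part (i) you use the flattening $H^0(\calL_i)^*\to H^0(\calM_i)^*\otimes H^0(\calN_i)^*$ dual to multiplication, together with Kronecker multiplicativity of matrix rank; the paper never introduces this map. Instead, the paper runs Lemma \ref{lemma: bbcg revisited} with the roles of the two sets exactly opposite to yours: the \emph{unknown} minimal decomposition $A$ of $p_1\ootimes p_k$ sits in the lemma's ``$A$'' slot and the product set $S_1\ttimes S_k$ in the ``$B$'' slot, so that all independent-conditions hypotheses fall on the known product set and are checked by K\"unneth. Applying the lemma to \emph{every} divisor $D\in|\calI_A\otimes\calN|$ yields the inclusion $H^0(\calI_A\otimes\calN)\subseteq H^0(\calI_{S_1\ttimes S_k}\otimes\calN)$, and counting sections gives $\deg(A)\geq\prod_i r_i$; part (ii) then follows from the same inclusion (equal degrees force equality of the two section spaces, and base-point-freeness outside $S_1\ttimes S_k$ gives $A\subseteq S_1\ttimes S_k$). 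Your version of (i) buys weaker hypotheses — you only use independent conditions on $\calM_i$ and $\calN_i$, not the condition on $\calL_i$ nor $h^1(\calM_i)=0$ — and is more self-contained; the paper's mechanism buys a single argument that powers both parts and never requires knowing anything about the unknown decomposition. Your part (ii), by contrast, must supply the lemma's hypotheses for the unknown set $B$, and your way of extracting them (linear independence of the $\calM$- and $\calN$-evaluation vectors of any minimal decomposition, read off from the rank-$N$ flattening) is correct and is not in the paper; it is a nice byproduct of your approach to (i).

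One step is stated incorrectly, though it is easily repaired. Your parenthetical justification that $\calI_{S}\otimes\calN$ ($S=S_1\ttimes S_k$) has no base points outside $S$ fails as written: if $w=(x_1\vvirg x_k)\notin S$ but some coordinate $x_j$ lies in $S_j$, then \emph{every} decomposable section $\bigotimes_i s_i$ with all $s_i\in H^0(\calI_{S_i}\otimes\calN_i)$ vanishes at $w$, so such sections cannot separate $w$ from $S$. The fix: pick $i_0$ with $x_{i_0}\notin S_{i_0}$ and take $s_{i_0}\in H^0(\calI_{S_{i_0}}\otimes\calN_{i_0})$ with $s_{i_0}(x_{i_0})\neq 0$ (hypothesis of (ii)), while for $j\neq i_0$ take $s_j\in H^0(\calN_j)$ with $s_j(x_j)\neq 0$; such $s_j$ exists because no point of $X_j$ is a base point of $|\calN_j|$ — outside $S_j$ by hypothesis of (ii), and at points of $S_j$ because $S_j$ imposes independent conditions on $\calN_j$, a hypothesis of (i). The resulting section still vanishes on $S$ (thanks to the factor $s_{i_0}$) but not at $w$. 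Note that this lifting statement is also used in the paper's proof of (ii), there without any justification, and it genuinely needs the hypotheses of (i): under (ii) alone a point of $S_j$ could be a base point of $|\calN_j|$, in which case $\calI_S\otimes\calN$ would have base points outside $S$ and both arguments would break.
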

\begin{proof}
Set $r_i = \rmR_{X_i}(p_i)$. Let $A \subseteq X_1 \ttimes X_k$ be a set of points enhancing the rank of $p_1 \ootimes p_k$. We show that $\deg(A) \geq \prod r_i$. Write $\calL = \calL_1 \ootimes \calL_k$, regarded as a line bundle on $X_1 \ttimes X_k$ and similarly for $\calM$ and $\calN$. 

Let $B = S_1 \ttimes S_k$. By Lemma \ref{lemma: product is minimal wrt inclusion}, there is no proper subset of $B$ spanning $p_1 \ootimes p_k$. We will show $H^0(\calI_A \otimes \calN) \subseteq H^0(\calI_B \otimes \calN)$. If $H^0(\calI_A \otimes \calN) = 0$, this statement is true. Otherwise, let $D \in H^0(\calI_A \otimes \calN)$ and identify $D$ with the divisor in $X_1 \ttimes X_k$ that it defines.

We verify the hypotheses of Lemma \ref{lemma: bbcg revisited} to show $A \cup B \subseteq D$:
\begin{itemize}
\dotitem $B$ imposes independent conditions on $\calL$: this is straightforward from K\"unneth's formula as $H^0(\calL_1 \ootimes \calL_k) = \bigotimes H^0 ( \calL_i)$ and similarly for the restrictions to $S$;
\dotitem $\Res_D(A) \cap \Res_D(B) = \emptyset$, because $\Res_D(A) = \emptyset$ since $A \subseteq D$;
\dotitem $h^1(\calI_{\Res_D(A \cup B)} \otimes \calL(-D)) = 0$: notice $\calL(-D) = \calM$ because $\calL = \calM \otimes \calN$ \cite[Prop. II.6.13]{h}; moreover $\Res_D(A \cup B) = \Res_D(B) \subseteq B$ and $B$ imposes independent conditions on $\calM$, which guarantees $h^1(\calI_{\Res_D(A\cup B)} \otimes \calL(-D)) \leq  h^1(\calI_{B} \otimes \calM) = h^1(\calM)$, where the right-hand-side is equal to $0$, again by K\"unneth's formula.
\end{itemize}

Applying Lemma \ref{lemma: bbcg revisited}, deduce $A \cup B \subseteq D$ and so $B \subseteq D$, and we deduce $H^0(\calI_A\otimes\calN) \subseteq H^0(\calI_B\otimes\calN)$.

Again, $B$ imposes independent conditions on $\calN$, namely the restriction map $H^0(\calN) \to H^0( \calN|_B)$ is surjective and one has the exact sequence
\[
 0 \to H^0( \calI_B \otimes \calN) \to H^0(\calN) \to H^0( \calN|_B) \to 0.
\]
This provides $h^0(\calI_B \otimes \calN)  = h^0(\calN) - \deg(B)  = h^0(\calN) - \textprod_i r_i $. On the other hand $ h^0(\calI_B \otimes \calN) \geq h^0(\calI_A \otimes \calN) \geq h^0(\calN) - \deg(A)$. Hence, $\deg(A) \geq \prod_i r_i$ and part \eqref{(i)statement} of the statement holds.

It remains to show that if the hypothesis of \eqref{(ii)statement} is satisfied, then $A = B$. From the first part of the proof we have $\deg(A) = \deg(B)$. Then, $H^0(\calI_A \otimes \calN) = H^0(\calI_B \otimes \calN)$. Indeed: $h^0(\calI_B~\otimes~\calN) = h^0(\calN) - \deg(B)$, because $B$ imposes independent conditions on $\calN$ and, on the other hand, $h^0 ( \calI_A \otimes \calN) = h^0(\calN) - \deg(A) + h^1(\calI_A \otimes \calN) - h^1(\calN)$; notice $h^1(\calI_A \otimes \calN) - h^1(\calN) \geq 0$ because $H^1 ( \calI_A \otimes \calN) \to H^1(\calN)$ is surjective. This shows $h^0(\calI_B \otimes \calN) = h^0(\calI_A \otimes \calN)$ and, since from the first part of the proof $H^0(\calI_A\otimes \calN) \subseteq H^0(\calI_B \otimes \calN)$, the equality follows.

Now, by assumption $\calI_{S_i} \otimes \calN_i$ has no base points out of $S_i$, and therefore $\calI_B \otimes \calN$ has no base points outside of $B$: this shows $A \subseteq B$ and since they have the same degree equality holds. 
\end{proof} 
 
Theorem \ref{thm: identifiability product via embeddings} can be applied to Veronese varieties as follows.
 
\begin{corollary}\label{prop: product identifiability rat nor curves}
Let $i = 1 \vvirg k$. For every $i$, let $d_i \geq 1$, $n_i \geq 1$ and let $f_i \in \bbP \Sym^{d_i} \bbC^{n_i+1}$ be elements with $\rmR_{\nu_{d_i}(\bbP^{n_i})}(f_i) = r_i \leq \lceil d_i/2 \rceil$. Let $S_i \subseteq \bbP^{n_i}$ be a set of $r_i$ points such that $p_i \in \langle \nu_{d_i}(S_i) \rangle$. Then $\rmR_{\nu_{d_1 \vvirg d_k}(\bbP^{n_1} \ttimes \bbP^{n_k} )} (f_1\ootimes f_k) = r_1 \cdots r_k$ and $S_1 \times \cdots \times S_k$ is the unique set of points in ${\bbP^{n_1}} \ttimes \bbP^{n_k}$ such that $p_1 \otimes \cdots \otimes p_k \in \langle \nu_{d_1 \vvirg d_k} (S_1 \times \cdots \times S_k) \rangle$.
\end{corollary}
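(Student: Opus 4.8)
The plan is to deduce this directly from Theorem \ref{thm: identifiability product via embeddings} by choosing the auxiliary line bundles $\calM_i,\calN_i$ appropriately. I would identify each Veronese variety $X_i = \nu_{d_i}(\bbP^{n_i})$ with $\bbP^{n_i}$ embedded by the very ample line bundle $\calL_i = \calO_{\bbP^{n_i}}(d_i)$, so that $H^0(\calL_i) = \Sym^{d_i}\bbC^{n_i+1}$, matching the setup of the theorem. The choice I would make is $\calN_i = \calO_{\bbP^{n_i}}(r_i)$ and $\calM_i = \calO_{\bbP^{n_i}}(d_i - r_i)$, which clearly satisfy $\calM_i \otimes \calN_i \simeq \calL_i$. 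The entire argument hinges on the numerical inequality $d_i - r_i \geq r_i - 1$, i.e. $r_i \leq (d_i+1)/2$; this follows from the hypothesis $r_i \leq \lceil d_i/2 \rceil$ because $\lceil d_i/2 \rceil \leq (d_i+1)/2$, with equality exactly in the odd case.

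The elementary input I would invoke is the standard fact that any set $S$ of $s$ distinct points in $\bbP^n$ imposes independent conditions on $\calO(e)$ whenever $e \geq s-1$: for each $P \in S$, the product of $s-1$ hyperplanes, each separating $P$ from one of the remaining points (multiplied by a suitable power of a linear form nonvanishing at $P$ to reach degree $e$), gives a section vanishing on $S \setminus \{P\}$ but not at $P$. Applying this to $S_i$, which consists of $r_i$ distinct points, and using $d_i - r_i \geq r_i - 1$, I obtain that $S_i$ imposes independent conditions on $\calN_i = \calO(r_i)$, on $\calM_i = \calO(d_i-r_i)$, and on $\calL_i = \calO(d_i)$, since all three degrees are at least $r_i - 1$. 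Moreover $h^1(\calM_i) = h^1(\calO_{\bbP^{n_i}}(d_i-r_i)) = 0$ since $d_i - r_i \geq 0$. This checks every hypothesis of part \eqref{(i)statement} and yields $\rmR_{\nu_{d_1 \vvirg d_k}(\bbP^{n_1}\ttimes\bbP^{n_k})}(f_1 \ootimes f_k) = \textprod_i r_i$.

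For the identifiability in part \eqref{(ii)statement}, the remaining thing to verify is that $\calI_{S_i} \otimes \calN_i$ has no base points outside $S_i$. Given any point $Q \notin S_i$, the $r_i + 1$ distinct points $S_i \cup \{Q\}$ impose independent conditions on $\calO(r_i) = \calN_i$ — here the relevant inequality is $r_i \geq (r_i+1) - 1$, the borderline instance of the fact above — so there is a section of $\calN_i$ vanishing on $S_i$ but not at $Q$, whence $Q$ is not a base point. This is precisely the hypothesis needed, and Theorem \ref{thm: identifiability product via embeddings}\eqref{(ii)statement} then gives that $S_1 \times \cdots \times S_k$ is the unique set evincing the rank of $f_1 \ootimes f_k$, which in turn forces identifiability of each $p_i$ and of the product.

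Since the proof is essentially a bookkeeping of degrees against the two line bundles, I do not anticipate a serious obstacle. The only delicate point is that the choice $\calN_i = \calO(r_i)$ lies exactly on the boundary where the independent-conditions requirement (which needs $d_i - r_i \geq r_i - 1$) and the base-point-freeness requirement (which needs $\deg \calN_i \geq r_i$) can be satisfied \emph{simultaneously}; this is exactly why the hypothesis $r_i \leq \lceil d_i/2 \rceil$ is the sharp threshold, and I would emphasize this tightness in the writeup.
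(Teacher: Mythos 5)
Your proof is correct and is exactly the application of Theorem \ref{thm: identifiability product via embeddings} that the paper intends (the corollary is stated there without a written proof): splitting $\calL_i = \calO_{\bbP^{n_i}}(d_i)$ as $\calM_i \otimes \calN_i$ with $\calN_i = \calO_{\bbP^{n_i}}(r_i)$ and $\calM_i = \calO_{\bbP^{n_i}}(d_i - r_i)$, where the hypothesis $r_i \leq \lceil d_i/2 \rceil$ is precisely what makes the independent-conditions inequality $d_i - r_i \geq r_i - 1$ and the base-point-freeness requirement $\deg \calN_i \geq r_i$ hold simultaneously. Your verification of the hypotheses of both parts \eqref{(i)statement} and \eqref{(ii)statement} is complete and correct.
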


We point out that in general, already for points of rank two, there are decompositions of the product not arising from decompositions of the single factors. 
\begin{example}\label{ex:NonProdMinimalDecomp}
 Let $X_1 \subseteq \bbP^{N_1}$ be a variety; suppose there is a $2$-dimensional linear space $\bbP A$ such that $\bbP A \cap X_1$ contains at least four points of intersection $a_1,a_2,a_3,a_4 \in X_1$. Let $p_1 \in \langle a_1, a_2\rangle$ be a point such that $\uR_{X_1}(p_1) = 2$. 
 
 Let $X_2 \subseteq \bbP ^{N_2}$ be a variety, let $b_1,b_2 \in X_2$ be two points and let $p_2 \in \langle b_1,b_2 \rangle$ be a point such that $\rmR_{X_2}(p_2) = 2$. Write $p_1 = a_1+a_2$ and $p_2 = b_1 + b_2$.

Suppose that the condition described in Theorem \ref{thm: rank 2 x rank 2 classification} does not hold, so that $\rmR_{X_1 \times X_2}(p_1 \otimes p_2)= 4$. Then $\{ a_1 , a_2\} \times \{ b_1,b_2\} = \{a_1\otimes b_1,a_2 \otimes b_1, a_1 \otimes b_2, a_2 \otimes b_2\}$ is a minimal set of four points $X_1 \times X_2$ spanning $p_1 \otimes p_2$.
 
 We determine a second set of four points as follows. After a suitable choice of coordinates, write $a_4 = a_1+a_2-a_3$. Define 
 \[
\begin{array}{llll}
a'_1 = a_1, & a'_2 = a_2, & a'_3 = a_3, & a'_4 = a_4,\\
b'_1 = b_1, & b'_2 = b_1, & b'_3 = b_2, & b'_4 = b_2,\\ 
\end{array}
\]
Then 
\begin{align*}
 \textsum_i a'_i \otimes b'_i & = a_1 \otimes b_1 + a_2 \otimes b_1 + a_3 \otimes b_2 + (a_1+a_2 - a_3) \otimes b_2 \\
 &= (a_1 + a_2)\otimes (b_1 + b_2) = p_1 \otimes p_2.
\end{align*}
This shows that $S = \{ a'_1 \otimes b'_1 \vvirg a'_4 \otimes b'_4\}$ is a set of four points of $X_1 \times X_2$ spanning $p_1 \otimes p_2$. The set $S$ is not of the form $S_1 \times S_2$ for some $S_1 \subseteq X_1$, $S_2 \subseteq X_2$.
\end{example}

 \bibliographystyle{amsalpha}
\begin{small}
\bibliography{bibMultisec}
\end{small}
 
 \end{document}